\numberwithin{equation}{section}
\theoremstyle{plain}
\newtheorem{thm}{Theorem}[section]
\newtheorem{cor}[thm]{Corollary}
\newtheorem{lem}[thm]{Lemma}
\newtheorem{prop}[thm]{Proposition}
\theoremstyle{definition}
\newtheorem{Def}[thm]{Definition}
\newtheorem{rem}[thm]{Remark}
\def\i{{\rm i}}
\def\j{{\rm j}}
\def\k{{\rm k}}
\def\Zz{{\mathbb Z}}
\def\Rr{{\mathbb R}}
\def\Cc{{\mathbb C}}
\def\Dd{\Delta_{\delta}}
\def\Cd{{\mathbb C}_{\delta}}
\def\Qq{{\mathbb Q}}
\def\Hh{{\mathbb H}}
\def\Hl{{\mathbb H}_{\lambda}}
\def\Ha{{\mathbb H}_{\alpha}}
\def\Hb{{\mathbb H}_{\beta}}
\def\Spinc{{\rm Spin}^c}
\def\Spin{{\rm Spin}}
\def\Hom{\operatorname{Hom}}
\def\End{\operatorname{End}}
\def\Aut{\operatorname{Aut}}
\def\Iso{\operatorname{Iso}}
\def\im{\operatorname{im}}
\def\op{{\rm o}}
\def\KO{K{\rm O}}
\def\tildeKO{\tilde K{\rm O}}
\def\KSp{K{\rm Sp}}
\def\K{K}
\def\SO{{\rm SO}}
\def\Sp{{\rm Sp}}
\def\O{{\rm O}}
\def\U{{\rm U}}
\def\SU{{\rm SU}}
\def\TSp{{\rm TSp}} 
\def\TU{{\rm TU}} 
\def\GL{{\rm GL}}
\def\Spin{{\rm Spin}}
\def\bar#1{\overline{#1}}
\def\da{d^{\alpha}}
\def\cd{c^{\delta}}
\def\BO{\operatorname{BrO}(X)}
\begin{document}

\title{Quaternionic structures}

\author[M.~\v Cadek, M.~C.~Crabb and J. Van\v zura]{Martin \v Cadek,
Michael Crabb, JI\v R\'I VAN\v ZURA}


\address{\newline Department of Mathematics, Masaryk University,
Kotl\' a\v rsk\' a 2, 611 37 Brno, Czech Republic}
\email{cadek@math.muni.cz}

\address{\newline Department of Mathematical Sciences, University
of Aberdeen, Aberdeen AB24 3UE, U.K.}
\email{m.crabb@maths.abdn.ac.uk}

\address{\newline Academy of Sciences of the Czech Republic, Institute
of Mathematics, \v Zi\v zkova 22,
616 62 Brno, Czech Republic}
\email{vanzura@ipm.cz}

\date{March 10, 2010}

\keywords{Bundles of quaternionic algebras; almost quaternionic manifolds;
vector bundles; characteristic classes; $K$-theory; Morita equivalence.}

\subjclass[2000]{55R25; 55R50; 55R40; 53C26}

\thanks{This work was supported by the the Czech Ministry of Education
[MSM0021622409]; the Academy of Sciences of the Czech Republic
[AVOZ10190503]; and the Grant Agency of the Czech Republic [201/05/2117].}

\abstract
{Any oriented $4$-dimensional real vector bundle is naturally
a line bundle over a bundle of quaternion algebras. In this paper 
we give an account of modules over bundles of quaternion algebras,
discussing Morita equivalence, characteristic classes and $K$-theory.
The results have been used to describe obstructions for the existence of almost 
quaternionic structures on $8$-dimensional Spin$^c$ manifolds in \cite{CCV} 
and may be of some interest, also, in quaternionic and algebraic geometry.}
\endabstract

\maketitle

\section{Introduction}

In this paper we will consider real and complex vector bundles which are
modules over bundles of quaternion algebras. Our study is motivated by
quaternionic geometry: the tangent bundles to quaternionic Kaehler 
and almost hyper-Kaehler manifolds provide examples of such bundles.
However, our methods are purely algebraic and topological.
Following classical ring theory we study Morita
equivalence of bundles of quaternion algebras. For a given
bundle of quaternion algebras we compute the Grothendieck group of left
modules as a classical $\KO$-group. For modules over a bundle 
of quaternion algebras we define 
characteristic classes and show that they behave as well as the Chern
classes for complex vector bundles. We also characterize those
complex vector bundles which are bundles over quaternion algebras.
In the final section we examine bundles of complexified quaternion 
algebras and their Morita equivalence. 

Our results extend the results obtained by Atiyah and Rees 
in \cite{AR} on complex quaternionic vector bundles and their $K$-theory and 
by Marchiafava and Romani (\cite{MR1, MR2, MR3}) concerning 
characteristic classes. Some of the results proved here were used in \cite{CCV} 
to derive the obstructions for the existence of almost quaternionic structures 
on 8-dimensional $\Spinc$ manifolds and in \cite{Sp} to compute indices of
elliptic complexes in quaternionic geometry. 

All bundles will be considered over a compact Hausdorff space $X$. Since the 
structure group of a bundle of quaternion algebras has to be $\Aut(\Hh)=\SO(3)$, 
every such bundle of quaternion algebras over $X$ is of the form 
$\Rr\oplus\alpha$ where $\Rr$ is a trivial real line bundle and $\alpha$ is 
an oriented 3-dimensional orthogonal vector bundle over $X$. 

\begin{Def}\label{Ha}
Let $\alpha$ be an oriented $3$-dimensional vector bundle over $X$
with a positive-definite inner product. We write $\Hh_\alpha$ for
the bundle of quaternion algebras $\Rr 1\oplus\alpha$ with the
multiplication
given in terms of the inner product $\langle -,-\rangle$ and
vector product $\times$ by
$$
(s,u)\cdot (t,v) = (st-\langle u,v\rangle , sv+tu+u\times v).
$$
Alternatively, thinking of
the group of automorphisms $\Aut (\Hh )$ of
$\Hh = \Rr 1\oplus\Rr\i\oplus\Rr\j\oplus\Rr\k$
as the special orthogonal group $\SO (3)$ of $\Rr^3=
\im\Hh$,
we have
$$
\Hh_\alpha = P\times_{\Aut (\Hh )} \Hh
\quad\text{and}\quad \alpha =P\times_{\SO (3)} \Rr^3,
$$
where $P\to X$ is the principal $\SO (3)=\Aut (\Hh )$-bundle given
by $\alpha$.
\end{Def}

It is clear that different inner products on $\alpha$ define isomorphic
bundles of quaternion algebras. Conjugation $^{-} : \Hh \to \Hh$ gives an
isomorphism from $\Hh$ to
the opposite algebra $\Hh^\op$ compatible with the action of $\Aut (\Hh )$.
Hence
$\Ha$ and $\Ha^\op$ are isomorphic as bundles of algebras.
Notice that $^{-} :\alpha \to \alpha$ (that is, $-1$) is an orientation-reversing
isomorphism. 

\begin{Def}\label{Ha-bundle}
Let $\xi$ be a real vector bundle over $X$. We say that $\xi$  is
an {\it $\Ha$-bundle} if it has a left $\Ha$-module
structure, that is, a bundle map $\Ha\otimes_\Rr \xi \to \xi$
that restricts to a module structure in each fibre.
\end{Def}

\begin{rem}\label{orientation}
An $\Ha$-bundle $\xi$ has a canonical orientation. To orient the fibre $\xi_x$ 
at $x\in X$ we choose a basis $e_1,\ldots ,e_n$ as 
$\Hh_{\alpha_x}$-vector space and choose an oriented orthonormal basis 
$\i ,\, \j ,\, \k$ of $\alpha_x$. Then $e_1$, $\i e_1$, $\j e_1$, $\k e_1,
\ldots ,\, e_n$, $\i e_n$, $\j e_n$, $\k e_n$ orients $\xi_x$.
In particular, the orientation of $\Ha$ is determined by the orientation of 
$\alpha$. 
\end{rem}

\begin{rem}\label{metric}
If $\xi$ has an $\Ha$-structure, we may
(using a partition of unity to glue together local metrics)
choose a positive-definite real
inner product on $\xi$ such that the structure homomorphism
$$
\rho :\Ha \to \End_\Rr (\xi )
$$
is a $*$-homomorphism, that is, $\rho (r)^*=\rho (\bar{r})$ for
$r\in \Ha$.
\end{rem}

Given a $4n$-dimensional real inner product space $V$ with a left
$\Hh$-module structure compatible, as above, with the inner product,
we write $\O(V)$ and $\Sp(V)$ for the orthogonal and symplectic groups of
$V$ and define
$$
\TSp (V) = \{ g\in \O (V) \mid g(rv) = \kappa (r)g(v)
\text{\ for some $\kappa\in\Aut (\Hh )$ and  all $v\in V$, $r\in\Hh$}\}.
$$
(The `T' is intended to indicate `twisted'.)
It is evidently a subgroup of the special orthogonal group
$\SO(V)$ and we have an extension
$$
1 \to \Sp (V) \to \TSp (V) \to \Aut (\Hh ) \to 1.
$$
Since automorphisms of $\Hh$ are inner, we may equivalently describe
$\TSp (V)$ as the subgroup $\Sp (1)\cdot\Sp (V)
=(\Sp(1)\times \Sp(V))/\{\pm (1,1)\}$ of orthogonal maps of
the form $v\mapsto a\cdot g(v)$, where $a\in \Sp (1)\subseteq\Hh$ and
$g\in\Sp (V)$.
The group $\TSp (V)$ acts (orthogonally) on $\Hh$ and on $V$.

\begin{lem}\label{TSp}
Let $\xi$ be a $4n$-dimensional orthogonal real vector bundle.
Then $\xi$ admits an $\Hh_\alpha$-structure for some $\alpha$ if
and only if the structure group of
$\xi$ reduces from $\O (\Hh^n)$ to $\TSp (\Hh^n)$.
\end{lem}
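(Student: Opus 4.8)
The plan is to read the lemma as an instance of the general correspondence between geometric structures on a bundle and reductions of its structure group. Fix the model fibre $V=\Hh^n$, a left $\Hh$-module with its standard inner product, so that the orthonormal frame bundle $\mathrm{Fr}(\xi)$ of $\xi$ is a principal $\O(V)$-bundle whose fibre over $x$ consists of the isometries $\Hh^n\to\xi_x$, with $\O(V)$ acting by precomposition. A reduction of the structure group to $\TSp(V)$ is by definition a principal $\TSp(V)$-subbundle $Q\subseteq\mathrm{Fr}(\xi)$. The core of the argument is to match such a $Q$ with an $\Ha$-structure on $\xi$; the group $\TSp(V)$ is tailored for exactly this, since it sits in the extension $1\to\Sp(V)\to\TSp(V)\to\Aut(\Hh)\to1$ and its defining relation $g(rv)=\kappa(r)g(v)$ records precisely the twisting of the $\Hh$-action by an algebra automorphism.

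For the direction $(\Leftarrow)$, assume $Q$ is given. Using the homomorphism $\TSp(V)\to\Aut(\Hh)=\SO(3)$ I would form the associated bundles $\Ha=Q\times_{\TSp(V)}\Hh$ and $\alpha=Q\times_{\TSp(V)}\im\Hh$, which recovers the bundle of quaternion algebras of Definition \ref{Ha} with its orientation, together with $\xi=Q\times_{\TSp(V)}V$. The scalar multiplication $\Hh\otimes_\Rr V\to V$ is $\TSp(V)$-equivariant by the defining relation, so it descends to a bundle map $\Ha\otimes_\Rr\xi\to\xi$ restricting fibrewise to a left module structure; since $\TSp(V)\subseteq\O(V)$ the given metric on $\xi$ is preserved. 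This produces an $\Ha$-structure.

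For the harder direction $(\Rightarrow)$, assume $\xi$ carries an $\Ha$-structure, and by Remark \ref{metric} choose the inner product so that $\rho\colon\Ha\to\End_\Rr(\xi)$ is a $*$-homomorphism. I would define $Q\subseteq\mathrm{Fr}(\xi)$ fibrewise by letting $Q_x$ be the set of isometries $\phi\colon\Hh^n\to\xi_x$ for which there exists an algebra isomorphism $\kappa\in\Iso(\Hh,\Hh_{\alpha_x})$ with $\phi(rv)=\kappa(r)\phi(v)$ for all $r\in\Hh$, $v\in\Hh^n$. Two points then have to be checked. First, $Q_x$ is nonempty: since $\Hh_{\alpha_x}$ is a quaternion algebra, picking any algebra isomorphism $\kappa$ turns $\xi_x$ into a left $\Hh$-module, necessarily free of rank $n$ because $\dim_\Rr\xi_x=4n$; the $*$-property lets one run quaternionic Gram--Schmidt to produce an orthonormal $\Hh$-basis, i.e.\ an admissible $\phi$. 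Second, $\TSp(V)$ acts simply transitively on $Q_x$: if $\phi,\phi'\in Q_x$ with associated $\kappa,\kappa'$, then $g=\phi^{-1}\phi'$ satisfies $g(rv)=(\kappa^{-1}\kappa')(r)\,g(v)$ with $\kappa^{-1}\kappa'\in\Aut(\Hh)$, so $g\in\TSp(V)$, and conversely precomposition by $\TSp(V)$ preserves $Q_x$.

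The main obstacle I expect is not the fibrewise algebra but the global, continuous part: showing that $Q$ is a genuine locally trivial principal $\TSp(V)$-subbundle rather than merely a fibrewise family. I would handle this by working in a local trivialization of $\xi$ together with a local oriented orthonormal frame of $\alpha$, which identifies $\Ha$ with the constant algebra $\Hh$ over a neighbourhood, and then observing that the quaternionic Gram--Schmidt process, applied to a continuously varying frame and using the continuity of $\rho$, produces continuous local sections of $Q$. Simple transitivity of the $\TSp(V)$-action on fibres then upgrades these to local trivializations, completing the identification of $\Ha$-structures with $\TSp(V)$-reductions.
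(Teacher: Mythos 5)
Your proposal is correct and follows essentially the same route as the paper: associated-bundle constructions $\Ha=Q\times_{\TSp(\Hh^n)}\Hh$, $\alpha=Q\times_{\TSp(\Hh^n)}\im\Hh$ for one direction, and the twisted frame bundle of isometries $\phi$ satisfying $\phi(rv)=\kappa(r)\phi(v)$ for the other. You merely supply details (nonemptiness of fibres via quaternionic Gram--Schmidt using the $*$-metric, simple transitivity, local triviality) that the paper leaves implicit when it asserts $\operatorname{Fr}(\xi)$ is a principal $\TSp(\Hh^n)$-bundle.
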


\begin{proof}
If the structure group of $\xi$ reduces from $\O(\Hh^n)$ to $\TSp(\Hh^n)$,
there is a principal $\TSp (\Hh^n)$-bundle $P\to X$ such that
$\xi = P\times_{\TSp (\Hh^n)} \Hh^n$. Then we have an oriented orthogonal 
$3$-dimensional vector bundle $\alpha = P\times_{\TSp (\Hh^n)}\im\Hh$,
with an associated quaternion algebra $\Ha= P\times_{\TSp (\Hh^n)} \Hh$
having an obvious left action on $\xi$. The real vector space $\Hh^n$ has a
canonical orientation as a left $\Hh$-module. Then the choice of a
$\TSp(\Hh^n)$-principal bundle gives orientations to the bundles $\xi$ 
and $\alpha$ such that the orientation of $\xi$ is canonical with respect to
$\Ha$-structure.

If $\xi$ admits an $\Ha$-structure for some $\alpha$, then the bundle of
frames
\begin{align*}
\operatorname{Fr}(\xi)=\{&f\in\Hom(\Hh^n,\xi) \mid
\text{$f$ is a real isometry, $f(rv)=\rho(r)f(v)$}\\
&\text{for some $\rho\in \Iso(\Hh,\Ha)$ and all $v\in\Hh^n$, $r\in \Hh$}\}.
\end{align*}
is a principal $\TSp(\Hh^n)$-bundle. As above,
$\xi=\operatorname{Fr}(\xi)\times_{\TSp(\Hh^n)}\Hh^n$ and 
$\Ha=\operatorname{Fr}(\xi)\times_{\TSp(\Hh^n)}\Hh$.
Hence the structure group of $\xi$ reduces from $\O(\Hh^n)$ to $\TSp(\Hh^n)$.
\end{proof}

\begin{rem}\label{twist1} 
In quaternionic geometry (\cite{S1, S2}) a smooth $4n$-manifold $M$
is said to be
almost quaternionic if its tangent bundle is associated to a principal 
$\GL(n,\Hh)\cdot \Sp(1)$-bundle. After a choice of compatible metric this
principal bundle reduces to a principal $\TSp(n)$-bundle $P$. The sphere bundle 
$S(\alpha)$ of the vector bundle 
$\alpha=P\times_{\TSp(n)}\im\Hh$ is called the twistor space of $M$.  

Considering $\Hh$ as a $2$-dimensional complex vector space with multiplication 
by complex numbers from the right and with the standard left action of $\Sp(1)$,  
denote by $S^2_{\Cc}\Hh$ the second symmetric power with the induced action of
$\Sp(1)$. One can show that $\alpha\otimes\Cc$ is isomorphic to the complex 
vector bundle $S^2H=P\times_{\TSp(n)}S^2\Hh$. The isomorphism 
is induced by the $Sp(1)$-invariant homomorphism of real representations
$\varphi:\operatorname{im} \Hh\to S^2\Hh$, $\varphi(u)=\j\otimes u -1\otimes
u\j$. Formally, $S^2H$ is the second
symmetric power of a vector bundle $H$. Such a bundle $H$ exists globally
if $w_2(\alpha)=0$. Then $\alpha$ is associated to a principal 
$\Spin(3)=\SU(2)$-bundle $Q$, $H=Q\times_{\SU(2)}\Hh$, and the twistor space 
is the complex projective bundle $\Cc P(H)$
(\cite{S1}). 

As a specific example we have the quaternionic projective space $\Hh P(\Hh^{n+1})$
where $\alpha$ is the Lie algebra bundle.
\end{rem}

For any $\Ha$-vector bundle $\xi$ one can form the associated projective bundle,
which we denote by $\Ha P(\xi)$.

\section{Quaternionic line bundles and Morita equivalence}

Since $\TSp(\Hh)=\SO (4)$, every oriented 4-dimensional real vector
bundle $\mu$ has an $\Ha$-structure
for some $\alpha$. 
In this section we describe all such
structures and their properties and define the notion of 
Morita equivalence of bundles of quaternion algebras.

Recall the double covers
$$
\Sp (1) \times \Sp (1) \to \SO (4)=\SO (\Hh ) \xrightarrow{(\rho_+\,,\rho_-\,)} \SO (3)\times \SO (3)
$$
given by mapping $(a,b)\in \Sp (1)\times \Sp (1)$ to the
map $g: v\mapsto av\bar{b}$ in $\SO (\Hh )$ and
$g$ to  $(\rho_+(g),\rho_-(g))=(\rho (a),\rho (b))$,
where $\rho : \Sp (1) \to \SO (3)$ maps $a$ to $v\mapsto av\bar{a}$
(the adjoint representation).

This leads to a complete description of the twisted quaternionic line bundles.

\begin{prop}\label{Ha_Hb}
Let $\mu$ be an oriented $4$-dimensional orthogonal vector bundle
over $X$.
Write $\alpha =\rho_+(\mu )$ and $\beta =\rho_-(\mu )$. Then
$\mu$ is an $\Hh_\alpha$-line bundle and a right $\Hb$-line bundle, and there
is a canonical isomorphism (of bundles of algebras)
$$
\Hh_\alpha\otimes_\Rr \Hh_\beta^\op = \End_{\Rr} (\mu ).
$$

Conversely, if $\alpha$ is an oriented orthogonal $3$-dimensional vector
bundle
and $\mu$ is an orthogonal $\Hh_\alpha$-line bundle, then $\mu$ acquires
an orientation under which $\rho_+(\mu )$ is identified with
$\alpha$ and $\beta =\rho_-(\mu )$ is characterized by an
isomorphism (of bundles of algebras)
$$
\Hh_\beta^\op = \End_{\Hh_\alpha} (\mu ).
$$
Moreover, we have
$$
w_2(\alpha )=w_2(\beta )=w_2(\mu ).
$$
\end{prop}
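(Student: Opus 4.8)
The plan is to reduce everything to the fibre model $\Hh$ and then pass to associated bundles. Over a point, regard $V=\Hh$ as a $4$-dimensional oriented inner product space, let $L_q,R_q\in\O(\Hh)$ denote left and right multiplication by $q\in\Hh$, and recall the classical isomorphism $\Hh\otimes_\Rr\Hh^\op\xrightarrow{\cong}\End_\Rr(\Hh)$, $a\otimes b\mapsto L_aR_b=(v\mapsto avb)$; this is an algebra isomorphism (the opposite on the second factor is forced by $R_{b_1 b_2}=R_{b_2}R_{b_1}$) and a dimension count shows it is bijective. For $g\in\SO(\Hh)$ coming from $(a,b)\in\Sp(1)\times\Sp(1)$, i.e. $g(v)=av\bar b$, a direct computation gives $gL_qg^{-1}=L_{aq\bar a}$ and $gR_qg^{-1}=R_{bq\bar b}$. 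Since $q\mapsto aq\bar a$ fixes $1$ and acts on $\im\Hh$ as $\rho(a)=\rho_+(g)$, and likewise $q\mapsto bq\bar b$ acts on $\im\Hh$ as $\rho(b)=\rho_-(g)$, the left-multiplication map transforms under $\SO(4)$ through $\Rr\oplus\rho_+$ and the right-multiplication map through $\Rr\oplus\rho_-$.

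Now let $P\to X$ be the principal $\SO(4)=\SO(\Hh)$-bundle of oriented orthonormal frames of $\mu$, so that $\mu=P\times_{\SO(4)}\Hh$ and, by definition, $\alpha=P\times_{\SO(4)}\im\Hh$ via $\rho_+$ and $\beta=P\times_{\SO(4)}\im\Hh$ via $\rho_-$. By the equivariance just established, the fibrewise left-multiplication map $\Hh\otimes\Hh\to\Hh$ is $\SO(4)$-equivariant when the first factor carries the action $\Rr\oplus\rho_+$; associating it over $P$ yields a bundle map $\Ha\otimes_\Rr\mu\to\mu$ making $\mu$ an $\Ha$-line bundle, and symmetrically a right $\Hb$-line bundle. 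Associating the isomorphism $\Hh\otimes_\Rr\Hh^\op\cong\End_\Rr(\Hh)$, which is equivariant for $\Rr\oplus\rho_+$ on the first factor, $\Rr\oplus\rho_-$ on the second and conjugation on $\End_\Rr(\Hh)$, gives the canonical isomorphism of bundles of algebras $\Ha\otimes_\Rr\Hb^\op\cong\End_\Rr(\mu)$. The orientation of $\mu$ used to build $P$ agrees with the canonical orientation of Remark \ref{orientation}, since on the model $\Hh$ the basis $1,\i,\j,\k$ is positively oriented.

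For the converse, suppose $\alpha$ is oriented orthogonal of rank $3$ and $\mu$ is an orthogonal $\Ha$-line bundle. By Lemma \ref{TSp} with $n=1$, noting $\TSp(\Hh)=\SO(4)$, the frame bundle $\operatorname{Fr}(\mu)$ is a principal $\SO(4)$-bundle, it orients $\mu$ compatibly with Remark \ref{orientation}, and $\Ha=\operatorname{Fr}(\mu)\times_{\SO(4)}\Hh$; tracing the construction identifies $\rho_+(\mu)$ with $\alpha$. The commutant $\End_{\Ha}(\mu)$ is, fibrewise, the algebra of endomorphisms of $\Hh$ commuting with all $L_q$, namely the right multiplications $\{R_q\}\cong\Hh^\op$; associating over $\operatorname{Fr}(\mu)$ and using the right-multiplication transformation law identifies it with $\Hb^\op$, where $\beta=\rho_-(\mu)$.

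It remains to prove $w_2(\alpha)=w_2(\beta)=w_2(\mu)$, which I regard as the main point. All three classes are the obstruction class associated with the central extension $1\to\Zz/2\to\Spin(n)\to\SO(n)\to 1$ for $n=3,4$. The homomorphism $\rho_+\colon\SO(4)\to\SO(3)$ lifts to the projection $\mathrm{pr}_1\colon\Spin(4)=\Sp(1)\times\Sp(1)\to\Sp(1)=\Spin(3)$, giving a morphism of central extensions; the essential check is that this sends the kernel isomorphically, and indeed the generator $(-1,-1)$ of $\ker(\Spin(4)\to\SO(4))$ maps to $-1$, the generator of $\ker(\Spin(3)\to\SO(3))$. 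Naturality of the connecting homomorphism then gives $w_2(\alpha)=w_2(\mu)$, and using $\mathrm{pr}_2$ in place of $\mathrm{pr}_1$ gives $w_2(\beta)=w_2(\mu)$. (Equivalently one may note that $\Lambda^2\mu$ splits $\SO(4)$-invariantly into its self-dual and anti-self-dual parts, which are exactly $\alpha$ and $\beta$, and extract the relation from there; but the extension argument is cleaner and avoids orientation bookkeeping.) The main obstacle throughout is tracking the orientation conventions so that the canonical orientation of Remark \ref{orientation} matches the frame bundle and that $\rho_+$ rather than $\rho_-$ recovers $\alpha$; once these are fixed, each step is a routine equivariance check or an application of naturality.
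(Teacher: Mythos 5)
Your proof is correct, and the first two parts (the construction of the left $\Ha$- and right $\Hb$-structures via the associated-bundle description of the isomorphism $\Hh\otimes_\Rr\Hh^\op\cong\End_\Rr(\Hh)$, and the identification of $\End_{\Ha}(\mu)$ with the right multiplications in the converse) follow essentially the same route as the paper. The one genuine divergence is the final claim $w_2(\alpha)=w_2(\beta)=w_2(\mu)$. You argue via naturality of the obstruction class for the central extension $1\to\Zz/2\to\Spin(n)\to\SO(n)\to1$, checking that $\rho_+$ lifts to $\mathrm{pr}_1\colon\Sp(1)\times\Sp(1)\to\Sp(1)$ and that the kernel generator $(-1,-1)$ maps to $-1$; this is correct and is in fact the same style of argument the paper uses later to prove Theorem \ref{w2}. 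The paper instead argues more elementarily: over the $3$-skeleton the bundle $\mu$ admits a nowhere-zero section, so as an $\Ha$-line (resp.\ right $\Hb$-line) it is trivialized there, giving $\mu\cong\Ha=\Rr\oplus\alpha$ and $\mu\cong\Hb^\op=\Rr\oplus\beta$ over the $3$-skeleton, whence the equality of $w_2$'s since restriction to the $3$-skeleton is injective on $H^2(-;\Zz/2)$. Your version buys uniformity with the later Morita-equivalence criterion and avoids the skeletal reduction; the paper's version avoids invoking spin structures and exhibits the stronger fact that $\mu$, $\Rr\oplus\alpha$ and $\Rr\oplus\beta$ actually agree over the $3$-skeleton. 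Both are complete.
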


\begin{proof}
Let $\mu$ be an oriented 4-dimensional orthogonal vector bundle. There is a
principal $\SO(\Hh)=\TSp(1)$-bundle $P$ such that
$\mu=P\times_{\SO(\Hh)}\Hh$.
From the definition, $\Ha=P\times_{\rho_+}\Hh$ and  $\Hb=P\times_{\rho_-}\Hh$,
where $\rho_+$ and $\rho_-$ determine the respective actions of $\TSp(1)$ on $\Hh$.
Then $\mu$ is a left $\Ha$-line and a right $\Hb$-line.
Next
$$\End_{\Rr}(\mu)=P\times_{\SO(\Hh)}\End_{\Rr}(\Hh),$$
where the action of $\SO(\Hh)$ on $\End_{\Rr}(\Hh)$ is
$(a,b)\cdot f:v\mapsto af(\bar{a}vb)\bar{b}$.
Similarly,
$$\Ha\otimes_{\Rr}\Hb^\op=P\times_{\rho_+\otimes\rho_-}
(\Hh\otimes_{\Rr}\Hh^\op),$$
where $\rho_+\otimes\rho_-$ acts on $\Hh\otimes\Hh^\op$  by
$(a,b)\cdot (h_1\otimes h_2)=ah_1\bar{a}\otimes bh_2\bar{b}$.
Since the isomorphism of algebras
$\Hh\otimes_{\Rr}\Hh^\op\to\End_{\Rr}(\Hh)$
given by
$h_1\otimes h_2\mapsto (v\mapsto h_1vh_2)$
is invariant under the actions of $\SO(\Hh)$ described above,
we get $\Ha\otimes\Hb^\op=\End_{\Rr}(\mu)$.  

Conversely, suppose that $\mu$ is an orthogonal $\Ha$-line. On $\mu$
consider the
canonical orientation given by the $\Ha$-structure. As in the proof of Lemma
\ref{TSp} we can construct a principal $\TSp(1)$-bundle
$P$ such that $\mu=P\times_{\TSp(1)}\Hh$ and $\Ha=P\times_{\rho_+}\Hh$.
(This gives $\mu$ the canonical orientation with respect to $\Ha$.)
Hence $\alpha=\rho_+(\mu)$.

Further, we have the monomorphism of algebras
$\Hb^\op\hookrightarrow \Ha\otimes_{\Rr}\Hb^\op=\End_{\Rr}(\mu)$.
Its image is $\End_{\Ha}(\mu)$. Consequently, $\Hb^\op=\End_{\Ha}(\mu)$.

Without loss of generality we can assume that $X$ is a finite CW-complex.
Every $4$-dimensional real vector bundle $\mu$ over the $3$-skeleton of $X$
has a section. If it is an $\Ha$-line and a right $\Hb$-line, then over the
$3$-skeleton $\mu=\Ha=\Rr\oplus\alpha$ and $\mu=\Hb^\op=\Rr\oplus\beta$.
Consequently, $w_2(\mu)=w_2(\alpha)=w_2(\beta)$.
\end{proof}

\begin{lem}\label{lambda2}
The vector bundles $\alpha$ and $\beta$ defined in the previous proposition
are isomorphic to the bundles of eigenspaces of the Hodge star
operator on $\Lambda^2\mu$ corresponding to the eigenvalues $1$ and $-1$,
respectively,
and consequently
$$\alpha\oplus\beta=\Lambda^2\mu.$$
\end{lem}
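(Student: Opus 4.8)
The plan is to reduce the statement to a fact about the $\SO(4)$-representation on $\Lambda^2\Hh$ and then exhibit explicit equivariant isomorphisms. Writing $\mu = P\times_{\SO(\Hh)}\Hh$ as in the proof of Proposition \ref{Ha_Hb}, we have $\alpha = P\times_{\rho_+}\im\Hh$, $\beta = P\times_{\rho_-}\im\Hh$ and $\Lambda^2\mu = P\times_{\SO(\Hh)}\Lambda^2\Hh$. The Hodge star operator is determined by the inner product and the orientation, both of which are $\SO(\Hh)$-invariant; hence $*$ is $\SO(\Hh)$-equivariant and its $(\pm 1)$-eigenbundles are the associated bundles $P\times_{\SO(\Hh)}\Lambda^2_{\pm}\Hh$, where $\Lambda^2_{\pm}\Hh\subseteq\Lambda^2\Hh$ are the fibrewise eigenspaces. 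It therefore suffices to produce $\SO(\Hh)$-equivariant isomorphisms $\im\Hh\to\Lambda^2_+\Hh$ and $\im\Hh\to\Lambda^2_-\Hh$ intertwining the actions $\rho_+$ and $\rho_-$, respectively, with the natural action on $\Lambda^2\Hh$.

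For this I would use left and right multiplication. For $u\in\im\Hh$ the maps $x\mapsto ux$ and $x\mapsto xu$ are skew-symmetric for the inner product, so they define $2$-forms $\omega_u(x,y)=\langle ux,y\rangle$ and $\omega'_u(x,y)=\langle xu,y\rangle$, giving linear maps $u\mapsto\omega_u$ and $u\mapsto\omega'_u$ from $\im\Hh$ to $\Lambda^2\Hh$. The inner product $\langle p,q\rangle=\mathrm{Re}(p\bar q)$ is invariant under left and right multiplication by unit quaternions, so for $g\colon v\mapsto av\bar b$ (with $a,b\in\Sp(1)$) a short computation, using $g^{-1}(v)=\bar a v b$, gives $g\cdot\omega_u=\omega_{a u\bar a}$ and $g\cdot\omega'_u=\omega'_{b u\bar b}$. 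Since $a\mapsto(u\mapsto a u\bar a)$ is exactly the adjoint representation $\rho$, the first map is $\rho_+$-equivariant and the second is $\rho_-$-equivariant, as required.

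It remains to check that these maps are isomorphisms onto the correct eigenspaces. Taking the orthonormal basis $e_0=1$, $e_1=\i$, $e_2=\j$, $e_3=\k$, which orients $\Hh$ compatibly with Remark \ref{orientation}, I would simply evaluate: one finds $\omega_{\i}=e_0\wedge e_1+e_2\wedge e_3$, which is self-dual, and $\omega'_{\i}=e_0\wedge e_1-e_2\wedge e_3$, which is anti-self-dual, with the analogous formulas for $\j$ and $\k$. Thus $u\mapsto\omega_u$ carries $\im\Hh$ isomorphically onto the $(+1)$-eigenspace $\Lambda^2_+\Hh$ and $u\mapsto\omega'_u$ onto the $(-1)$-eigenspace $\Lambda^2_-\Hh$. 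Passing to associated bundles yields isomorphisms $\alpha\cong\Lambda^2_+\mu$ and $\beta\cong\Lambda^2_-\mu$, and hence $\alpha\oplus\beta=\Lambda^2_+\mu\oplus\Lambda^2_-\mu=\Lambda^2\mu$.

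The only delicate point is bookkeeping: one must keep the sign conventions, the orientation of $\Hh$, and the pairing $(\rho_+,\rho_-)\leftrightarrow(a,b)$ consistent so that the self-dual part really matches $\alpha=\rho_+(\mu)$ rather than $\beta$. The equivariance computation $g\cdot\omega_u=\omega_{a u\bar a}$ is the conceptual heart; once it and the single basis evaluation are in hand, everything else is formal.
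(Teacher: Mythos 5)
Your proposal is correct and follows essentially the same route as the paper: both realize $\alpha$ and $\beta$ inside $\Lambda^2\mu$ via left and right multiplication by imaginary quaternions (viewed as skew-symmetric endomorphisms, i.e.\ $2$-forms), check $\SO(\Hh)$-equivariance with respect to $\rho_+$ and $\rho_-$, and identify the images as the $(\pm 1)$-eigenspaces of the Hodge star. The only cosmetic difference is that the paper verifies self-duality by computing the wedge products $\Lambda^{\pm}(h_1)\wedge\Lambda^{\pm}(h_2)$ against the volume form, while you evaluate on an orthonormal basis; your explicit check of the equivariance $g\cdot\omega_u=\omega_{au\bar a}$ spells out what the paper merely asserts.
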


\begin{proof}
On $\Hh$ consider the real inner product $\langle u,v\rangle=
\operatorname{Re}(u\bar{v})$. The inner product enables us
to define the isomorphism of real vector spaces
$\Hh\otimes\Hh\cong \End_{\Rr}(\Hh): h_1\otimes h_2\mapsto (v\mapsto \langle
h_1, v\rangle h_2)$. Under this isomorphism the space $\Lambda^2\Hh$ maps
onto
the subspace of all skew-symmetric endomorphisms on $\Hh$. Since the left
multiplication $L_h$ and the right multiplication $R_h$ by a pure imaginary
quaternion $h$ are skew-symmetric endomorphisms of $\Hh$, we get two
inclusions $\Lambda^+:\im\Hh\to\Lambda^2\Hh$ and
$\Lambda^-:\im\Hh\to\Lambda^2
\Hh$ which are isometries (up to a multiple). It is easy to show that
\begin{equation*}
\Lambda^+(h)=1\wedge h+\i\wedge h\i+\j\wedge h\j+\k\wedge h\k,
\quad
\Lambda^-(h)=1\wedge h+\i\wedge\i h+\j\wedge\j h+\k\wedge\k h.
\end{equation*}
Put $\omega=1\wedge\i\wedge\j\wedge\k$. Then one can check that
$$\Lambda^+(h_1)\wedge\Lambda^+(h_2)=8\langle h_1,h_2\rangle \omega,\quad
\Lambda^-(h_1)\wedge\Lambda^-(h_2)=-8\langle h_1,h_2\rangle \omega,\quad
\Lambda^+(h_1)\wedge\Lambda^-(h_2)=0,$$
which means that $\Lambda^+(\im\Hh)$ and $\Lambda^-(\im\Hh)$ correspond
to the perpendicular eigenspaces for eigenvalues $1$ and $-1$, respectively, 
of the Hodge star operator. Hence we get an isomorphism $\Lambda^+(\im\Hh)\oplus
\Lambda^-(\im\Hh)=\Lambda^2\Hh$.

We have associated vector bundles
$\Lambda^2\mu=P\times_{\SO(\Hh)}\Lambda^2\Hh$,
$\alpha=\rho_+(\mu)=P\times_{\rho_+}\im\Hh$,
$\beta=\rho_-(\mu)=P\times_{\rho_-}\im\Hh$. Since the inclusions
$\Lambda^+$, $\Lambda^-$ are invariant with respect to actions of $\SO(\Hh)$
given on $\im \Hh$ by $\rho_+$ and $\rho_-$, respectively, we may define
$\Lambda^+:\alpha\to\Lambda^2\mu$, $\Lambda^-:\beta\to\Lambda^2\mu$.
The considerations above imply that
$\Lambda^+(\alpha)\oplus\Lambda^-(\beta)=\Lambda^2\mu$.
\end{proof}

\begin{cor}\label{2cor} 
Every $4$-dimensional oriented orthogonal vector bundle $\mu$ is a (left) 
module over just two (up to orientation preserving isomorphism) quaternion 
algebras, namely $\Ha$ and $\Hb$ where $\alpha=\rho_+(\mu)$ and 
$\beta=\rho_-(\mu)$. The orientation of $\mu$ is canonical with respect to 
the $\Ha$-structure and is not canonical with respect to the $\Hb$-structure. 
If $-\mu$ is the same vector bundle with opposite orientation then
$$\rho_+(-\mu)=\rho_-(\mu), \quad \rho_-(-\mu)=\rho_+(\mu).$$ 
\end{cor}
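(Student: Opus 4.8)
The plan is to deduce everything from Proposition \ref{Ha_Hb} and Lemma \ref{lambda2}, the only genuinely new ingredient being the effect of reversing the orientation of $\mu$ on the pair $(\rho_+,\rho_-)$. I would begin with the last assertion, $\rho_+(-\mu)=\rho_-(\mu)$ and $\rho_-(-\mu)=\rho_+(\mu)$, since everything else follows from it together with Proposition \ref{Ha_Hb}. By Lemma \ref{lambda2} the bundles $\alpha=\rho_+(\mu)$ and $\beta=\rho_-(\mu)$ are the $(+1)$- and $(-1)$-eigenbundles of the Hodge star operator $\star$ on $\Lambda^2\mu$. On the middle exterior power of a $4$-dimensional oriented inner product space one has $\star^2=1$, and reversing the orientation replaces $\star$ by $-\star$; hence it interchanges the two eigenbundles. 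Applying Lemma \ref{lambda2} to $-\mu$ therefore gives $\rho_+(-\mu)=\Lambda^2_+(-\mu)=\Lambda^2_-(\mu)=\rho_-(\mu)$ and, symmetrically, $\rho_-(-\mu)=\rho_+(\mu)$. Equivalently, one may conjugate the extension $1\to\Sp(1)\to\TSp(1)\to\Aut(\Hh)\to1$ by the orientation-reversing isometry $v\mapsto\bar v$ of $\Hh$, which carries $g:v\mapsto av\bar b$ to $v\mapsto bv\bar a$ and so swaps the two factors of $\Sp(1)\times\Sp(1)$, interchanging $\rho_+$ and $\rho_-$.

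Next I would read off the module structures. Proposition \ref{Ha_Hb} applied to the oriented bundle $\mu$ shows that $\mu$ is a left $\Ha$-line with $\alpha=\rho_+(\mu)$, and that the given orientation is the canonical one for this $\Ha$-structure. Applying the same proposition to $-\mu$ and using the first step, $-\mu$ is a left $\Hh_{\rho_+(-\mu)}=\Hb$-line, with canonical orientation that of $-\mu$. Since a left-module structure is data on the underlying unoriented bundle, $\mu=-\mu$ is simultaneously a left $\Ha$- and a left $\Hb$-module. This proves the first assertion and, at the same time, the orientation statement: the orientation of $\mu$ canonical for the $\Ha$-structure is the given one, whereas the orientation canonical for the $\Hb$-structure is the opposite orientation, namely that of $-\mu$; the two agree only when $\alpha\cong\beta$, so in general the $\Hb$-orientation is not the given one.

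It remains to show that $\Ha$ and $\Hb$ are the only possibilities. Suppose $\mu$ carries a left $\Hh_\gamma$-structure compatible with a metric. By the converse part of Proposition \ref{Ha_Hb} this structure endows $\mu$ with a canonical orientation $o$ for which $\gamma=\rho_+(\mu,o)$. The orientation $o$ is $\pm$ the given orientation of $\mu$: if $o$ is the given one then $\gamma=\rho_+(\mu)=\alpha$, and if $o$ is the opposite then $\gamma=\rho_+(-\mu)=\rho_-(\mu)=\beta$ by the first step. (On a connected base these are the only two cases; in general one argues componentwise.) Hence every quaternion algebra over which $\mu$ is a left module is, by an orientation-preserving isomorphism, $\Ha$ or $\Hb$, as claimed.

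The only real subtlety, and the step I would be most careful about, is the bookkeeping of orientations: tracking which of the two orientations of $\mu$ is canonical for a given module structure, and checking that the ``$-1$ on the imaginary part'' built into the isomorphism $\Hb^\op\cong\Hb$ is exactly the orientation reversal $\mu\mapsto-\mu$ detected by $(\rho_+,\rho_-)$. Pinning down the effect of orientation reversal on $(\rho_+,\rho_-)$ at the outset, through the Hodge star of Lemma \ref{lambda2}, is precisely what renders the remaining identifications routine.
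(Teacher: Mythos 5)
Your proposal is correct and ends up resting on the same two pillars as the paper's proof (the converse half of Proposition \ref{Ha_Hb} for existence and uniqueness of the structures), but it reaches the formulas $\rho_{\pm}(-\mu)=\rho_{\mp}(\mu)$ by a genuinely different route. The paper obtains them last, as a consequence of the observation that the left $\Hb$-action is the right $\Hb$-action (whose canonical orientation coincides with the canonical $\Ha$-orientation) twisted by the orientation-reversing conjugation of $\Hb$; you obtain them first, from Lemma \ref{lambda2}, by noting that orientation reversal negates the Hodge star on $\Lambda^2\mu$ and hence interchanges its $\pm 1$-eigenbundles, and then let everything else follow. Your ordering is arguably the cleaner logical organization, and your uniqueness argument (any $\Hh_\gamma$-structure induces a canonical orientation, which must be $\pm$ the given one, whence $\gamma=\alpha$ or $\gamma=\rho_+(-\mu)=\beta$) is more explicit than the paper's one-line appeal to Proposition \ref{Ha_Hb}. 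Two small caveats. First, Lemma \ref{lambda2} identifies $\rho_\pm(\mu)$ with the eigenbundles only as unoriented bundles, while the corollary's formulas are equalities of \emph{oriented} bundles, so the Hodge-star step by itself leaves an orientation to check; your alternative argument --- conjugating the extension by $v\mapsto\bar v$, which swaps the two $\Sp(1)$-factors and hence $\rho_+$ with $\rho_-$ while preserving the standard orientation of $\im\Hh$ on both sides --- does settle this, and is in spirit the closest to what the paper actually does. Second, the parenthetical ``the two agree only when $\alpha\cong\beta$'' is a slip (the canonical $\Hb$-orientation is \emph{always} the opposite one; what $\alpha\cong\beta$ governs is Corollary \ref{a=b}), but it plays no role in the argument; likewise the ``just two'' count needs the componentwise reading you flag for disconnected $X$, a point the paper glosses over as well.
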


\begin{proof} An orientation of $\mu$ is determined by the choice of the
$\SO(\Hh)$-principal bundle $P$ such that $\mu=P\times_{\SO(\Hh)}\Hh$
and by the standard orientation of $\Hh$.
The standard orientation of $\im\Hh$ gives an orientation to 
$\alpha=P\times_{\rho_+}\im \Hh$ and $\beta=P\times_{\rho_-}\im\Hh$. 
Hence $\mu$ has the canonical orientation as an $\Ha$-module  
which coincides with the canonical orientation of $\mu$ as 
a right $\Hb$-module (given by the multiplication by $\i$, $\j$,
$\k$ from the right). Using the previous proposition  there are no 
other (up to orientation preserving isomorphisms) left and right quaternion
structures on $\mu$ with compatible orientations.
Since the left action of $\Hb$ is given by the right action of $\Hb$ and by the 
orientation reversing conjugation $^-:\Hb\to \Hb$, the orientation of $\mu$ is 
not canonical with respect to the left $\Hb$-structure. This implies the 
formulas.
\end{proof}    

\begin{lem}\label{p1}
The first Pontryagin classes of $\alpha$ and $\beta$
are
$$
p_1(\alpha)=p_1(\mu)+2e(\mu), \quad p_1(\beta)=p_1(\mu)-2e(\mu).
$$
\end{lem}

\begin{proof} Let $H$ be the Hopf $\Ha$-line bundle over the  
quaternionic projective bundle $\Ha P(\Ha^{\infty})$.
Since any $\Ha$-line bundle $\mu$ is a summand of a trivial $\Ha$-bundle
over $X$, there is a section $s:X\to \Ha
P(\Ha^{\infty})$ such that $\mu=s^*(H)$. So it is enough to check the
formulas for the bundle $H$. By the Leray-Hirsch Theorem, $H^*(\Ha
P(\Ha^{\infty}))=H^*(X)[e]$, where $e$ is the Euler class of $H$.  Further, the 
pullback of $H$ to the sphere bundle $S(\Ha^{\infty})$ is $\Ha$. So we have
$p_1(H)=p_1(\Ha)+ae$, where $a\in\Zz$. Restricting $H$ to a point in $X$ we
get Hopf bundle over $\Hh P^{\infty}$ and determine that $a=-2$.
\end{proof}

\begin{Def}\label{Morita}
Two bundles $\Hh_\alpha$ and $\Hh_\beta$ are said to be
{\it Morita equivalent} if
there is a $4$-dimensional real vector bundle $\mu$ and
an isomorphism of bundles of algebras
$$
\Hh_\alpha \otimes_\Rr \Hb^\op\cong \End_\Rr(\mu).
$$
Such an isomorphism defines a morphism in the following category
$\mathcal{M}(X)$. The objects of $\mathcal{M}(X)$ are the oriented
$3$-dimensional vector bundles over $X$. A morphism $\beta\to\alpha$
is represented by an isomorphism 
$\Hh_\alpha \otimes_\Rr \Hb^\op\to \End_\Rr(\mu)$,
two such isomorphisms $\Hh_\alpha \otimes \Hh_\beta^\op \to
\End_\Rr(\mu)$
and
$\Hh_\alpha \otimes \Hh_\beta^\op \to \End_\Rr(\mu')$
being regarded as the same if $\mu$ and $\mu'$ are isomorphic
as $\Hh_\alpha\otimes\Hh_\beta^\op$-modules. The identity $\alpha\to\alpha$
is given by $\mu=\Ha$. Composition is the tensor 
product: $\Hh_\alpha \otimes \Hh_\beta^\op \to \End_\Rr(\mu)$
and
$\Hh_\beta \otimes \Hh_\gamma^\op \to \End_\Rr(\nu)$
compose to $\Hh_\alpha\otimes\Hh_\gamma^\op\to \End_\Rr(\mu\otimes_{\Hh_\beta}
\nu )$. The category $\mathcal{M}(X)$ is a groupoid with
the inverse of $\Hh_\alpha\otimes\Hh_\beta^\op\to\End_\Rr(\mu)$ 
given by the dual $\Hh_\beta\otimes\Hh_\alpha^\op\to \End_\Rr(\mu^*)$
(or by $\Hh_\beta\otimes\Hh_\alpha^o\to \End_\Rr(\mu)$, using the
isomorphisms $\Hh_\alpha\to\Hh_\alpha^o$ and $\Hh_\beta\to \Hh_\beta^o$
and the inner product on $\mu$).
\end{Def}

Morita equivalence is usually defined as an equivalence of categories of
left modules. In our case both definitions are equivalent (in Proposition
\ref{Morita_eq} we prove one direction) and we have chosen the one which is
more suited to our purposes.

\begin{prop}\label{Morita_eq}
An isomorphism $\Ha\otimes\Hb^\op\cong\End_\Rr(\mu)$ determines an
equivalence from the category of $\Hb$-modules to the category of $\Ha$-modules
given by $\eta\mapsto\mu\otimes_{\Hb}\eta$.
\end{prop}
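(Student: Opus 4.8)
The plan is to produce an explicit quasi-inverse to the functor $F\colon\eta\mapsto\mu\otimes_{\Hb}\eta$ and to reduce the required natural isomorphisms to the two standard Morita isomorphisms, which in turn follow fibrewise from classical Morita theory for the quaternions. Denote the given isomorphism by $\phi\colon\Ha\otimes_\Rr\Hb^\op\to\End_\Rr(\mu)$. Throughout, $\mu$ carries the resulting $(\Ha,\Hb)$-bimodule structure: the left $\Ha$-action is $r\cdot m=\phi(r\otimes 1)(m)$ and the right $\Hb$-action is $m\cdot b=\phi(1\otimes b)(m)$, the two actions commuting. Recall from Proposition \ref{Ha_Hb} that the $\Hb$-action realises the full commutant, $\End_\Ha(\mu)=\Hb^\op$; equivalently $\mu$ is fibrewise a free module of rank $1$ over the division algebra $\Hh_{\alpha_x}$, hence a finitely generated projective generator (a progenerator) in each fibre.

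First I would introduce the dual bimodule $\mu^*=\Hom_\Ha(\mu,\Ha)$, consisting of the left-$\Ha$-linear bundle maps $\mu\to\Ha$. It is a $(\Hb,\Ha)$-bimodule, with right $\Ha$-action $(f\cdot r)(m)=f(m)r$ and left $\Hb$-action $(b\cdot f)(m)=f(m\cdot b)$, and I would define the candidate quasi-inverse $G\colon\xi\mapsto\mu^*\otimes_\Ha\xi$ from $\Ha$-modules to $\Hb$-modules. (Using the $*$-structure of Remark \ref{metric} one may instead identify $\mu^*$ with $\mu$ carrying the conjugated actions; this is convenient but not essential.) The heart of the argument is to establish the two Morita isomorphisms
\begin{equation*}
\mu\otimes_{\Hb}\mu^*\cong\Ha\quad\text{(as $(\Ha,\Ha)$-bimodules)},\qquad
\mu^*\otimes_\Ha\mu\cong\Hb\quad\text{(as $(\Hb,\Hb)$-bimodules)}.
\end{equation*}
The first is induced by evaluation $m\otimes f\mapsto f(m)$ and the second by the trace map $f\otimes m\mapsto(m'\mapsto f(m')\cdot m)$, viewed as an element of $\End_\Ha(\mu)^\op=\Hb$. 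Granting these, associativity of the tensor product together with the unit isomorphisms $\Ha\otimes_\Ha\xi\cong\xi$ and $\Hb\otimes_{\Hb}\eta\cong\eta$ yields natural isomorphisms $FG(\xi)=\mu\otimes_{\Hb}(\mu^*\otimes_\Ha\xi)\cong(\mu\otimes_{\Hb}\mu^*)\otimes_\Ha\xi\cong\xi$ and, symmetrically, $GF(\eta)\cong\eta$, so that $F$ is an equivalence.

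It remains to verify the two displayed isomorphisms, and this is where the main work lies. I would check that evaluation and trace are bimodule maps by a direct computation, and that they are isomorphisms by working fibrewise: over each $x\in X$ the maps reduce to the classical Morita isomorphisms for the progenerator $\mu_x$ over the division algebra $\Hh_{\alpha_x}$, where surjectivity of the trace map uses precisely that $\mu$ is a generator (the trace ideal is the whole of $\Hh_{\beta_x}$). The genuine obstacle is the passage from fibres to bundles: one must confirm that $\mu\otimes_{\Hb}\eta$ and $\mu^*\otimes_\Ha\xi$ are again vector bundles, that is, that the fibrewise tensor products have locally constant rank and vary continuously, which follows from local triviality of $\Ha$, $\Hb$ and $\mu$, and that a bundle map which is a fibrewise isomorphism is a bundle isomorphism. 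With these continuity and constant-rank checks in place, the fibrewise Morita isomorphisms assemble to the global isomorphisms above, completing the proof.
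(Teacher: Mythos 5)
Your proof is correct. The paper itself disposes of this proposition in one line, by appealing to the fact that $\mathcal{M}(X)$ is a groupoid: in Definition \ref{Morita} the inverse of the morphism $\Ha\otimes\Hb^\op\to\End_\Rr(\mu)$ is declared to be the dual $\Hb\otimes\Ha^\op\to\End_\Rr(\mu^*)$ and composition is the tensor product over the middle algebra, so the two composites being identity morphisms is exactly the statement that $\mu\otimes_{\Hb}\mu^*\cong\Ha$ and $\mu^*\otimes_\Ha\mu\cong\Hb$ as bimodules, which is what makes the functors $\mu\otimes_{\Hb}-$ and $\mu^*\otimes_\Ha-$ quasi-inverse. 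What you have written is precisely the unpacking of that groupoid claim: you construct the dual bimodule explicitly, verify that evaluation and trace are balanced bimodule maps, reduce bijectivity to the fibrewise Morita theory of the progenerator $\mu_x$ over the division algebra $\Hh_{\alpha_x}$, and attend to the bundle-theoretic points (local constancy of rank of the tensor products, and the fact that a fibrewise isomorphism of vector bundles is an isomorphism) that the paper leaves entirely implicit. So the two arguments rest on the same idea, but yours supplies the verification that the paper defers to its definition of $\mathcal{M}(X)$; the paper's version buys brevity and a clean categorical packaging, while yours is self-contained and makes visible exactly where the generator property of $\mu$ and the local triviality of the bundles are used.
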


\begin{proof}
This is a consequence of the fact that $\mathcal{M}(X)$ is a groupoid.
\end{proof}

There is a topological criterion for Morita equivalence:

\begin{thm}\label{w2}
Two bundles $\Hh_\alpha$ and $\Hh_\beta$
are Morita equivalent if and only if $w_2(\alpha )
=w_2(\beta )$.
\end{thm}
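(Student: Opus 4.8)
The plan is to derive both implications from the structural results already in hand: the forward direction is essentially a restatement of Proposition \ref{Ha_Hb}, while the reverse direction becomes a lifting problem that I will settle by an obstruction-theoretic computation.

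For the ``only if'' direction, suppose $\Ha$ and $\Hb$ are Morita equivalent, so there is a $4$-dimensional real bundle $\mu$ with $\Ha\otimes_\Rr\Hb^\op\cong\End_\Rr(\mu)$. Restricting the module structure to the $\Ha$-factor makes $\mu$ an orthogonal $\Ha$-line bundle, and the image of $\Hb^\op$ in $\End_\Rr(\mu)$ is precisely $\End_{\Ha}(\mu)$; by the converse part of Proposition \ref{Ha_Hb} (with Corollary \ref{2cor} to fix orientations) we may identify $\alpha=\rho_+(\mu)$ and $\beta=\rho_-(\mu)$. The equalities $w_2(\alpha)=w_2(\beta)=w_2(\mu)$ asserted in Proposition \ref{Ha_Hb} then give $w_2(\alpha)=w_2(\beta)$ at once.

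For the ``if'' direction, Proposition \ref{Ha_Hb} and Corollary \ref{2cor} reduce the problem to constructing an oriented $4$-dimensional orthogonal bundle $\mu$ with $\rho_+(\mu)\cong\alpha$ and $\rho_-(\mu)\cong\beta$ as oriented bundles; such a $\mu$ then automatically yields $\Ha\otimes_\Rr\Hb^\op\cong\End_\Rr(\mu)$. The pair $(\alpha,\beta)$ is classified by a principal $\SO(3)\times\SO(3)$-bundle $Q\to X$, and producing $\mu$ amounts to lifting $Q$ along $(\rho_+,\rho_-):\SO(4)\to\SO(3)\times\SO(3)$ to a principal $\SO(4)$-bundle $P$, after which $\mu=P\times_{\SO(4)}\Hh$. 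First I would verify that $(\rho_+,\rho_-)$ is a central $\Zz/2$-extension: its kernel consists of those $g\in\SO(4)$ acting trivially on $\Lambda^2\Hh$, which is exactly $\{\pm 1\}$. To identify the obstruction I would factor through $\Spin(4)=\Sp(1)\times\Sp(1)$: the composite $\Sp(1)\times\Sp(1)\xrightarrow{\rho\times\rho}\SO(3)\times\SO(3)$ is a $(\Zz/2)^2$-cover whose lifting obstruction for $Q$ is the full pair $(w_2(\alpha),w_2(\beta))\in H^2(X;\Zz/2)^2$, and the passage from $\Spin(4)$ to $\SO(4)=\Spin(4)/\langle(-1,-1)\rangle$ corresponds to the quotient homomorphism $(\Zz/2)^2\to(\Zz/2)^2/\langle(1,1)\rangle\cong\Zz/2$, $(e_1,e_2)\mapsto e_1+e_2$. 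By naturality of the obstruction class under change of coefficient group, the obstruction to lifting $Q$ to $\SO(4)$ is thus $w_2(\alpha)+w_2(\beta)$, which vanishes exactly when $w_2(\alpha)=w_2(\beta)$; under that hypothesis $P$, and hence $\mu$, exists.

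The main obstacle I anticipate is the precise identification of the obstruction as $w_2(\alpha)+w_2(\beta)$. One must correctly track how the $(\Zz/2)^2$-obstruction for the $\Spin(4)$-lift descends under the antidiagonal quotient, and confirm that the generator of $\ker(\rho_+,\rho_-)$ is $-1$ rather than a factorwise sign; it is this ``sum'' homomorphism, not a projection, that forces the symmetric condition $w_2(\alpha)=w_2(\beta)$. A secondary point is checking that orientations are respected, so that the lift realizes $\alpha$ and $\beta$ in their intended roles rather than interchanged; this is guaranteed because $(\rho_+,\rho_-)$ preserves the ordered splitting $\Lambda^2\mu=\alpha\oplus\beta$ recorded in Lemma \ref{lambda2}.
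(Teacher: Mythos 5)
Your proof is correct, and in the reverse direction it takes a genuinely different (though closely related) route from the paper's. The paper identifies the double cover $(\rho_+,\rho_-):\SO(4)\to\SO(3)\times\SO(3)$ as the \emph{pullback} of $\Spin(6)\to\SO(6)$ along the direct-sum inclusion $\SO(3)\times\SO(3)\hookrightarrow\SO(6)$, using the exceptional isomorphism $\SU(4)\cong\Spin(6)$ together with the identification $\Lambda^2\mu=\alpha\oplus\beta$ from Lemma \ref{lambda2}; the obstruction is then the restriction of $w_2$ of the universal $6$-plane bundle, namely $w_2(\alpha\oplus\beta)=w_2(\alpha)+w_2(\beta)$. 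You instead recognise $(\rho_+,\rho_-)$ as the \emph{pushout} of the spin cover $\Sp(1)\times\Sp(1)\to\SO(3)\times\SO(3)$ along the sum homomorphism $(\Zz/2)^2\to\Zz/2$ that kills the diagonal, and invoke naturality of the lifting obstruction under this quotient of the central kernel. Both arguments compute the same class $w_2(\alpha)+w_2(\beta)$; yours avoids the exceptional isomorphism at the cost of appealing to the (standard, but worth stating) functoriality of obstruction classes for central extensions under change of kernel. Your explicit handling of the ``only if'' direction via Proposition \ref{Ha_Hb} supplies what the paper leaves implicit, and your closing remark on orientations is sound, since the lifting problem is posed for the principal $\SO(3)\times\SO(3)$-bundle classifying the \emph{oriented} pair $(\alpha,\beta)$, so the resulting $\mu$ realizes $\alpha$ and $\beta$ in their intended roles.
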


\begin{proof} The commutative diagram
$$
\xymatrix{
\SO(4)\ar[d]^{(\rho_+,\rho_-)} \ar[r] \ar[rrd]^{\Lambda^2}
& \SU(4)\ar[r]^{\cong}\ar[rd]
& \Spin(6) \ar[d]\\
\SO(3)\times \SO(3) \ar[rr] &
& \SO(6)
}
$$           
implies that the double cover $\Spin(6)\to\SO(6)$ 
pulls back, under the inclusion of $\SO (3)\times \SO (3)$ in $\SO (6)$, 
to $(\rho_+,\rho_-):\SO (4) \to \SO (3)\times \SO (3)$.
Hence the obstruction to lifting from $\SO (3)\times\SO (3)$ to $\SO (4)$
is given by the sum of the second Stiefel-Whitney classes.
\end{proof}

\begin{rem}\label{Brauer} In \cite{DK} the orthogonal Brauer group
of a space $X$ was defined as the quotient of the monoid of all bundles
of simple central $\Rr$-algebras over $X$ (with a multiplication induced 
by the tensor product) by the submonoid of all bundles of 
the form $\End_\Rr(\mu)$ where $\mu$ is a real vector bundle over $X$. 
The inverse is given by the opposite algebra. Two bundles $\Ha$
and $\Hb$ are Morita equivalent if and only if they 
determine the same element of the Brauer group $\BO$.
There is a group isomorphism $$\BO\to H^0(X;\Zz/2)\oplus H^2(X;\Zz/2),$$
in which $\Ha$ corresponds to $(1,w_2(\alpha))$.
\end{rem}

If $\alpha$ and $\beta$ are Morita equivalent, we can 
describe all the morphisms from $\alpha$ to $\beta$ in $\mathcal{M}(X)$ from
the knowledge of one of them.

\begin{prop}\label{morphisms}
Given an isomorphism $\Ha\otimes\Hb^\op\cong\End_\Rr(\mu)$ and a real line
bundle $\delta$ we use the isomorphism $\End_\Rr(\mu)\to\End_\Rr
(\delta\otimes\mu)$ defined by the tensor product with the identity on $\delta$ 
to get $\Ha\otimes\Hb^\op\cong\End_\Rr(\delta\otimes\mu)$. This defines 
a bijection from $H^1(X;\Zz/2)$ to $\Hom_{\mathcal{M}(X)}(\beta,\alpha)$.
In particular, the automorphism group
$\operatorname{Aut}_{\mathcal{M}(X)}(\alpha)$ is isomorphic to
$H^1(X;\Zz/2)$.
\end{prop}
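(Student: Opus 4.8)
The plan is to reinterpret a morphism $\beta\to\alpha$ in $\mathcal{M}(X)$ as an isomorphism class of modules over the fixed bundle of algebras $A=\Ha\otimes\Hb^\op$, and then to apply fibrewise Morita theory. Unwinding the definition of $\mathcal{M}(X)$, a morphism $\beta\to\alpha$ is the same thing as a $4$-dimensional real vector bundle $\mu'$ carrying an $A$-module structure for which the induced map $A\to\End_\Rr(\mu')$ is an isomorphism, two such being identified when the underlying $A$-modules are isomorphic. Since $\Hh\otimes_\Rr\Hh^\op\cong\End_\Rr(\Hh)=M_4(\Rr)$ is central simple, $A$ is fibrewise isomorphic to $M_4(\Rr)$, whose unique simple module is $\Rr^4$; hence a $4$-dimensional $A$-module is precisely one that is fibrewise the simple module. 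We are given one such module, namely $\mu$, so $\Hom_{\mathcal{M}(X)}(\beta,\alpha)$ is non-empty.

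First I would show that any other fibrewise-simple $4$-dimensional $A$-module $\mu'$ has the form $\delta\otimes\mu$ for a real line bundle $\delta$. Set $\delta=\Hom_A(\mu,\mu')$, the bundle whose fibre at $x$ consists of the $A_x$-linear maps $\mu_x\to\mu'_x$. By the double centralizer theorem (Schur's lemma) applied fibrewise, $\End_{M_4(\Rr)}(\Rr^4)=\Rr$, so $\Hom_{A_x}(\mu_x,\mu'_x)$ is $1$-dimensional; thus $\delta$ is a real line bundle, being a subbundle of $\Hom_\Rr(\mu,\mu')$ cut out by a fibrewise-linear condition of constant rank. The evaluation map $\delta\otimes\mu\to\mu'$, $\phi\otimes v\mapsto\phi(v)$, is $A$-linear and, again by Schur, a fibrewise isomorphism, hence an isomorphism of $A$-modules. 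This passage from the fibrewise statement to a global bundle isomorphism is the main obstacle, and it rests precisely on the fact that a fibrewise isomorphism of vector bundles is a bundle isomorphism.

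With this in hand the map $\delta\mapsto\delta\otimes\mu$ is surjective onto $\Hom_{\mathcal{M}(X)}(\beta,\alpha)$. For injectivity I would recover $\delta$ from $\delta\otimes\mu$ by noting $\Hom_A(\mu,\delta\otimes\mu)=\delta\otimes\End_A(\mu)=\delta\otimes\Rr=\delta$, so that an isomorphism of $A$-modules $\delta\otimes\mu\cong\delta'\otimes\mu$ forces $\delta\cong\delta'$. Finally, real line bundles over the compact Hausdorff space $X$ are classified up to isomorphism by $w_1\in H^1(X;\Zz/2)$, and tensor product corresponds to addition, identifying the set of isomorphism classes of real line bundles with the group $H^1(X;\Zz/2)$; this yields the asserted bijection.

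For the statement about automorphisms I would take $\beta=\alpha$ and use the identity morphism $\mu=\Ha$ as base point, so that the bijection becomes $\delta\mapsto\delta\otimes\Ha$ from $H^1(X;\Zz/2)$ to $\Aut_{\mathcal{M}(X)}(\alpha)$. It remains to check that this respects the group structures. Composition in $\mathcal{M}(X)$ is the tensor product over the middle algebra, and $(\delta\otimes\Ha)\otimes_{\Ha}(\delta'\otimes\Ha)\cong(\delta\otimes\delta')\otimes\Ha$ since $\Ha\otimes_{\Ha}\Ha=\Ha$ and line bundles factor out of the tensor product. Thus composition corresponds to $\delta\otimes\delta'$, that is, to addition in $H^1(X;\Zz/2)$, so the bijection is a group isomorphism. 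More conceptually, tensoring a morphism $\mu$ by $\delta$ is exactly post-composition with the automorphism $\delta\otimes\Ha$ of $\alpha$, exhibiting $\Hom_{\mathcal{M}(X)}(\beta,\alpha)$ as a torsor under $\Aut_{\mathcal{M}(X)}(\alpha)\cong H^1(X;\Zz/2)$.
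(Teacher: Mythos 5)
Your proposal is correct and follows essentially the same route as the paper: the paper defines $\delta$ fibrewise as the line spanned by an intertwiner $g$ with $f_x(a)=gag^{-1}$ (Skolem--Noether), which is exactly your $\Hom_A(\mu,\mu')$ identified via Schur's lemma, and both conclude with the evaluation isomorphism $\delta\otimes\mu\to\mu'$. Your additional checks of injectivity and of compatibility with the group structure are details the paper leaves implicit.
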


\begin{proof}
Isomorphisms $\Ha\otimes\Hb^\op\cong\End_\Rr(\mu)$ and 
$\Ha\otimes\Hb^\op\cong\End_\Rr(\mu')$ determine an isomorphism
$f:\End_\Rr(\mu)\to\End_\Rr(\mu')$. Since every automorphism of 
$\End_\Rr(\Rr^4)$ is inner, we can define $\delta$ as the line bundle
with the fibre at $x$ generated by an isomorphism $g:\mu_x\to\mu'_x$
such that $f_x(a)=gag^{-1}$. Then the map $g\otimes v\mapsto g(v)$ gives
an isomorphism $\delta\otimes\mu\to\mu'$. 
\end{proof}

\begin{cor}\label{a=b}
For a given $4$-dimensional oriented orthogonal vector bundle $\mu$
the bundles $\alpha=\rho_+(\mu)$ and $\beta=\rho_-(\mu)$ are isomorphic
if and only if $\mu$ has a $1$-dimensional (not necessarily trivial)
summand.
\end{cor}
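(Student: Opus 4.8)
The plan is to turn the statement into a lifting problem for the classifying map of $\mu$ and to resolve it by producing a homotopy pullback square of classifying spaces, from which both implications drop out simultaneously.

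First I would reformulate the left-hand side. A $1$-dimensional summand of $\mu$ is the same thing as a real line subbundle (together with its orthogonal complement), i.e. a section of the projective bundle $\Rr P(\mu)\to X$; equivalently, the structure group of $\mu$ reduces from $\SO(4)=\SO(\Hh)$ to the stabilizer $G=S(\O(1)\times\O(3))$ of the unoriented line $\Rr 1\subseteq\Hh$. Using the double cover $\Sp(1)\times\Sp(1)\to\SO(4)$, $g=[(a,b)]\colon v\mapsto av\bar b$, one checks that $g(\Rr1)=\Rr1$ iff $a\bar b\in\Rr$ iff $b=\pm a$, which is exactly the condition $\rho_+(g)=\rho(a)=\rho(b)=\rho_-(g)$. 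Hence $G$ is precisely the preimage under $(\rho_+,\rho_-)$ of the diagonal $\SO(3)\xrightarrow{\Delta}\SO(3)\times\SO(3)$, and reducing $\mu$ to $G$ means identifying $\alpha=\rho_+(\mu)$ with $\beta=\rho_-(\mu)$ at the level of structure groups.

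Next I would assemble the commutative square of Lie groups with $G\hookrightarrow\SO(4)$ on top, $\SO(3)\xrightarrow{\Delta}\SO(3)\times\SO(3)$ on the bottom, and vertical maps $(\rho_+,\rho_-)$ and its restriction $G\to\SO(3)$. This is a pullback of groups, and the decisive point is that both vertical maps are central double covers with the same kernel $\{\pm I\}$: the short exact sequences $1\to\Zz/2\to G\to\SO(3)\to1$ and $1\to\Zz/2\to\SO(4)\to\SO(3)\times\SO(3)\to1$ form a map of extensions that is the identity on kernels. Applying $B$ gives a map of fibrations $B\Zz/2\to BG\to B\SO(3)$ over $B\Zz/2\to B\SO(4)\to B(\SO(3)\times\SO(3))$ which is a weak equivalence ($=\mathrm{id}$) on fibers, so the square
$$
\xymatrix{
BG \ar[r]\ar[d] & B\SO(4)\ar[d]\\
B\SO(3)\ar[r]^-{B\Delta} & B(\SO(3)\times\SO(3))
}
$$
is a homotopy pullback.

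Finally I would read off the corollary. The bundle $\mu$ is a map $f\colon X\to B\SO(4)$ whose composite to $B(\SO(3)\times\SO(3))$ classifies $(\alpha,\beta)$. By the homotopy pullback, $f$ lifts to $BG$ --- that is, $\mu$ gains a line summand --- if and only if $(\alpha,\beta)$ factors up to homotopy through $B\Delta$, i.e. there is an oriented $3$-bundle $\gamma$ with $(\gamma,\gamma)\simeq(\alpha,\beta)$; taking $\gamma=\alpha$, this happens exactly when $\alpha\cong\beta$, the given isomorphism furnishing the needed homotopy in the second coordinate. The main obstacle is precisely the homotopy pullback verification: one must confirm that the two central kernels $\{\pm I\}$ genuinely coincide and, on the topological side, that ``factoring through the diagonal up to homotopy'' matches an honest isomorphism $\alpha\cong\beta$. (For the implication $\Leftarrow$ alone the pullback can be bypassed: a splitting $\mu=\delta\oplus\nu$ yields an orientation-reversing isometry of $\mu$, namely reflection in the hyperplane $\nu$, whose action on $\Lambda^2\mu$ anticommutes with the Hodge $*$-operator and therefore interchanges its $\pm1$-eigenbundles $\alpha$ and $\beta$ of Lemma \ref{lambda2}.)
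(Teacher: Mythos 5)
Your proof is correct, and it takes a genuinely different route from the paper's. The paper deduces the corollary in two lines from Proposition \ref{morphisms}: if $\alpha\cong\beta$, then $\End_\Rr(\mu)\cong\Ha\otimes\Ha^\op\cong\End_\Rr(\Ha)$ gives two morphisms $\alpha\to\alpha$ in the groupoid $\mathcal{M}(X)$, so $\mu\cong\delta\otimes\Ha$ for some real line bundle $\delta$, which is visibly a summand; conversely, for an $\Ha$-line $\mu$ with a line subbundle $\delta$, multiplication gives $\mu\cong\delta\otimes\Ha$, whence $\rho_\pm(\mu)=\rho_\pm(\Ha)=\alpha$. You instead identify the stabilizer $G=S(\O(1)\times\O(3))$ of the line $\Rr 1$ with the preimage of the diagonal under $(\rho_+,\rho_-)$ (the computation $a\bar b\in\Rr\Leftrightarrow b=\pm a\Leftrightarrow\rho(a)=\rho(b)$ is right) and show that $BG\to B\SO(4)$ over $B\Delta$ is a homotopy pullback via the map of central $\Zz/2$-extensions; both kernels are indeed $\{\pm I\}$ and the fibrewise-equivalence criterion for homotopy cartesian squares applies, so both implications follow. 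What the paper's route buys is brevity (given Proposition \ref{morphisms}, which rests on the Skolem--Noether fact that automorphisms of $\End_\Rr(\Rr^4)$ are inner) and an explicit construction of $\delta$ inside the Morita formalism; what yours buys is independence from that machinery and a transparent accounting of the $H^1(X;\Zz/2)$-worth of inequivalent reductions, visible as the fibre $B\Zz/2$ of $BG\to B\SO(3)$. One small point worth making explicit: the homotopy over $B(\SO(3)\times\SO(3))$ requires an \emph{orientation-preserving} isomorphism $\alpha\to\beta$, and since the rank is odd any isomorphism can be corrected by composing with $-\mathrm{id}$; this also reconciles your conclusion with the remark following the corollary that the canonical isomorphism is orientation-reversing. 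Your parenthetical argument for the converse via the hyperplane reflection acting on $\Lambda^2\mu$ is likewise correct and consistent with Lemma \ref{lambda2}.
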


With respect to the canonical orientations of $\alpha$ and $\beta$ 
given by $\mu$, this isomorphism can only be orientation reversing.
The existence of  a $1$-dimensional summand is equivalent
to the existence of an orientation reversing involution of $\mu$.

\begin{proof} If $\Ha\otimes\Ha^\op\cong\End_\Rr(\mu)$, then by the previous
proposition $\mu\cong\delta\otimes\Ha$ for some $\delta$ which is thus a
subbundle of $\mu$. Conversely, if an $\Ha$-bundle $\mu$ has a subbundle
$\delta$, then the multiplication gives an isomorphism $\mu\to\delta\otimes\Ha$ 
which means that $\rho_+(\mu)=\rho_+(\Ha)=\rho_-(\Ha)=\rho_-(\mu)$.  
\end{proof}

In Section 4 we will need

\begin{lem}\label{Euler}
Let $\alpha$, $\beta$, $\gamma$ be oriented 3-dimensonal vector bundles.
Suppose that $\Ha\otimes\Hb^\op\cong\End(\mu)$, $\Hb\otimes\Hh_{\gamma}^\op
\cong \End(\nu)$ so that $\Ha\otimes\Hh_{\gamma}^\op\cong\End(\mu\otimes_{\Hb}\nu)$. Give 
$\mu$, $\nu$ and $\mu\otimes_{\Hb}\nu$ the canonical orientations.
Then the Euler class of $\mu\otimes_{\Hb}\nu$ is 
$$e(\mu\otimes_{\Hb}\nu)=e(\mu)+e(\nu).$$
In other words, the Euler class gives a functor from $\mathcal{M}(X)$ to
the group $H^4(X;\Zz)$.
\end{lem}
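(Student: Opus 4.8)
The plan is to compute the three Euler classes as primary obstructions to nowhere-zero sections and to compare the obstruction cocycles directly. Recall that for an oriented real $4$-plane bundle the fibre of the sphere bundle is $S^3$, which is $2$-connected, so the primary obstruction to a section lives in $H^4(X;\pi_3(S^3))=H^4(X;\Zz)$ and is precisely the Euler class. As in the proof of Proposition \ref{Ha_Hb} I may assume $X$ is a finite CW-complex. Over the $3$-skeleton $X^{(3)}$ the bundles $\mu$ and $\nu$ admit nowhere-zero sections $s$ and $t$; since each fibre of $\Hb$ is a division algebra, one has $s(x)\otimes t(x)=0$ if and only if $s(x)=0$ or $t(x)=0$, so $s\otimes t$ is a nowhere-zero section of $\lambda:=\mu\otimes_{\Hb}\nu$ over $X^{(3)}$. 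Fixing these sections, I would compare the obstruction cocycles $c(s)$, $c(t)$, $c(s\otimes t)$, which represent $e(\mu)$, $e(\nu)$, $e(\lambda)$ respectively.

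The computation is local to each $4$-cell $\sigma$. Over $\sigma$ I trivialize the algebra $\Hb$ and choose a right-$\Hb$ trivialization $\mu|_\sigma\cong\Hh$ and a left-$\Hb$ trivialization $\nu|_\sigma\cong\Hh$; the canonical isomorphism $\Hh\otimes_{\Hh}\Hh\cong\Hh$, $m\otimes n\mapsto mn$, then trivializes $\lambda|_\sigma\cong\Hh$ and carries $s\otimes t$ to the map $x\mapsto s(x)t(x)$. Restricting to $\partial\sigma\cong S^3$ and normalizing, $c(s)(\sigma)\in\pi_3(S^3)=\Zz$ is the degree of $s\colon S^3\to\Hh\setminus\{0\}\simeq S^3$, and similarly for $t$ and for $s\otimes t$, whose representing map is the pointwise quaternion product $s\cdot t\colon S^3\to\Hh\setminus\{0\}$.

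It then remains to observe that $\Hh\setminus\{0\}$ is homotopy equivalent, as an H-space, to the Lie group $\Sp(1)$, and that the generator of $H^3(\Sp(1);\Zz)$ is primitive; hence for $f,g\colon S^3\to\Sp(1)$ one has $\deg(f\cdot g)=\deg f+\deg g$. This gives $c(s\otimes t)(\sigma)=c(s)(\sigma)+c(t)(\sigma)$ on every $4$-cell, so $c(s\otimes t)=c(s)+c(t)$ as cocycles, and passing to cohomology yields $e(\lambda)=e(\mu)+e(\nu)$. The functoriality statement on $\mathcal{M}(X)$ is then a restatement, once one notes that $\rho_+(\lambda)=\alpha$ and $\rho_-(\lambda)=\gamma$ by Proposition \ref{Ha_Hb} together with the composition law of Definition \ref{Morita}.

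The step needing the most care is the bookkeeping of orientations and signs. One must verify that the canonical orientations of $\mu$, $\nu$ and $\lambda$ all correspond, under the chosen trivializations, to the standard orientation of $\Hh$ (by Remark \ref{orientation} the canonical basis $1,\i,\j,\k$ does this in each case), so that the three degrees are measured consistently and the H-space addition produces the sign $+$ rather than $-$. As an independent sanity check, subtracting the two identities of Lemma \ref{p1} gives $4e(\mu)=p_1(\alpha)-p_1(\beta)$; adding the analogous relations for $\nu$ and for $\lambda$ yields $4e(\lambda)=4\bigl(e(\mu)+e(\nu)\bigr)$, and the obstruction argument above is exactly what removes the resulting $4$-torsion ambiguity.
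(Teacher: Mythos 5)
Your argument is correct, but it proceeds by a genuinely different route from the paper. The paper reduces to a universal example: it realizes $\mu$ and $\nu$ as pullbacks of the Hopf bundles $H_1$, $H_2$ over the fibre product $P\Hb(\Hb^{\infty})\times_X\Hb P(\Hb^{\infty})$ of the right and left fibrewise quaternionic projective bundles, invokes Leray--Hirsch to write the cohomology there as $H^*(X)[e_1,e_2]$, observes that $e(H_1\otimes_{\Hb}H_2)=ae_1+be_2$ with no $H^*(X)$-contribution (because the restriction to the base is $\Hb$, with vanishing Euler class), and pins down $a=b=1$ by restricting to each factor. You instead work directly with the Euler class as the primary obstruction: sections $s$, $t$ over the $3$-skeleton yield the nowhere-zero section $s\otimes t$ of $\mu\otimes_{\Hb}\nu$, and on each $4$-cell the obstruction cocycle of $s\otimes t$ is the class of the pointwise quaternion product, which is additive in $\pi_3(\Sp(1))$ because $\Sp(1)$ is a group. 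Both arguments are sound; the paper's classifying-space method is the one it reuses for Lemma \ref{p1} and elsewhere and avoids cocycle-level bookkeeping, while yours is more elementary, makes the geometric source of additivity (multiplicativity of quaternions) explicit, and correctly isolates the one delicate point, namely that the canonical orientations of $\mu$ (as a right $\Hb$-module), of $\nu$, and of $\mu\otimes_{\Hb}\nu$ all match the standard orientation of $\Hh$ under your trivializations, which Corollary \ref{2cor} and Remark \ref{orientation} guarantee. Your closing consistency check via Lemma \ref{p1}, giving the identity after multiplication by $4$, is a nice independent corroboration, and you rightly note it does not by itself settle the torsion ambiguity.
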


\begin{proof}
As in the proof of Lemma \ref{p1} we use fibrewise classifying spaces. Let
$P\Hb (\Hb^{\infty})$ be the right quaternion projective bundle with the Hopf
bundle $H_1$ (a right $\Hb$-line bundle) and let $\Hb P(\Hb^{\infty})$ be the
left quaternion projective bundle with the left Hopf bundle $H_2$.
Then there are sections $s_1:X\to P\Hb (\Hb^{\infty})$ and $s_2:X\to \Hb
P(\Hb^{\infty})$ such that $\mu=s_1^*(H_1)$ and $\nu=s_2^*(H_2)$. So it is
enough to look at the fibre product $P\Hb(\Hb^{\infty})\times_X
\Hb P(\Hb^{\infty})$ and to show that $e(H_1\otimes_{\Hb} H_2)=e(H_1)+e(H_2)$.

Now $H^* (P\Hb (\Hb^{\infty})
\times_X \Hb P (\Hb^{\infty}))=H^*(X)[e_1,e_2]$,
where $e_i=e(H_i)$.
But $e(H_1\otimes H_2)=ae_1+be_2$ with $a,b\in\Zz$: it does not involve
any element of $H^*(X)$ since the restriction of $H_1\otimes_{\Hb} H_2$ to $X$ is 
$\Hb$ with the Euler class equal to zero. So one can calculate by 
restricting to the two factors $P \Hb (\Hb^{\infty})$ and $\Hb P(\Hb^{\infty})$ 
(using the inclusion of $X$ as $P\Hb(\Hb^1)$ or $\Hb P(\Hb^1)$).
The class restricts to $e_1$ and to $e_2$. Hence $a=b=1$.
\end{proof}

\section{$K$-theory}

We define $\KSp_\alpha^0(X)$ to be the Grothendieck group of
(left) $\Hh_\alpha$-bundles over the compact Hausdorff space $X$. The aim of
this section is to compute $\KSp_\alpha^0(X)$ as a classical $\KO$-group.
Let $X^{\alpha}$ stand for the Thom space of $\alpha$.

\begin{thm}\label{KSpalpha}
There is an isomorphism
$$
\KSp_\alpha^0 (X) \to \KO^0(\Hh_\alpha )=\tildeKO^{-1}(X^\alpha ) 
$$
given by mapping the class $[\xi ]$ to the element represented in
$\KO$-theory with compact supports by the linear map
$$
x\mapsto vx : \pi^*\xi \to \pi^*\xi
$$
over $v\in\Hh_\alpha$, where $\pi:\Ha\to X$ is the projection.
\end{thm}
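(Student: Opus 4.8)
The plan is to check first that the construction is a well-defined homomorphism, identify its target concretely, and then prove bijectivity by reducing, through Mayer--Vietoris and the local triviality of $\alpha$, to the classical relation between symplectic and orthogonal $K$-theory.

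For well-definedness, write $E$ for the total space of $\pi:\Ha\to X$ and let $v$ denote the tautological section of $\pi^*\Ha$ over $E$. The pair $(\pi^*\xi,\pi^*\xi)$ together with the bundle map $x\mapsto vx$ is a complex that is exact away from the zero section: since $\Ha$ is fibrewise a division algebra, left multiplication by $v$ is invertible whenever $v\neq 0$. The zero section is compact (as $X$ is), so this complex defines a class in $\KO$-theory with compact supports $\KO^0(\Ha)$. Because every $\Ha$-submodule splits off orthogonally (Remark~\ref{metric}), the monoid of $\Ha$-bundles is additive under $\oplus$, a direct sum of modules yields the direct sum of the associated complexes, and so $[\xi]\mapsto[(\pi^*\xi,\pi^*\xi,v\cdot)]$ passes to a homomorphism on the Grothendieck group. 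The identification of the target is then formal: $\KO^0(\Ha)$ is $\tildeKO^0$ of the one-point compactification of $E$, namely the Thom space of $\Ha$; since $\Ha=\Rr 1\oplus\alpha$, that Thom space is the reduced suspension $\Sigma X^\alpha$, whence $\KO^0(\Ha)=\tildeKO^0(\Sigma X^\alpha)=\tildeKO^{-1}(X^\alpha)$.

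For bijectivity, I would treat the symbol map as a natural transformation between two twisted forms of $K$-theory and compare them. Both sides are functors of the pair $(X,\alpha)$ carrying Mayer--Vietoris sequences, and the symbol map is compatible with restriction to open subsets and hence with the connecting homomorphisms. The source $\KSp_\alpha^0(X)$ is represented, via Lemma~\ref{TSp} and stabilization, by lifts of $\alpha:X\to B\SO(3)$ along $B\TSp(\Hh^\infty)\to B\SO(3)$; thus it is a twisted form of \emph{symplectic} $K$-theory (fibre the representing space of $\KSp$), while the target $\tildeKO^{-1}(X^\alpha)$ is the corresponding twisted form of \emph{orthogonal} $K$-theory. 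Choosing a finite cover of $X$ over whose pieces $\alpha$ trivialises and inducting with the five lemma reduces the claim to the case $\alpha=\Rr^3$. There $X^\alpha=\Sigma^3(X_+)$, so the target is $\tildeKO^{-1}(\Sigma^3 X_+)=\KO^{-4}(X)$, the source is ordinary $\KSp^0(X)$, and the symbol map is the quaternionic Bott class realizing the classical isomorphism $\KSp^0(X)\cong\KO^{-4}(X)$.

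The main obstacle is the twist. One must verify that the local isomorphisms over the pieces where $\alpha$ trivialises patch together, i.e. that the symbol construction is equivariant for the $\Aut(\Hh)=\SO(3)$-actions appearing on the two sides --- conjugation of the $\Hh$-structure on the symplectic side and the adjoint action on $\alpha=\im\Hh$ on the Thom-space side. Concretely this amounts to checking that the symplectic Bott class is natural for $\Aut(\Hh)$, so that the clutching data of $\alpha$ act compatibly on source and target; granting this, the reduced (untwisted) statement is precisely Bott periodicity and the Mayer--Vietoris induction finishes the proof. When $w_2(\alpha)=0$ the general twisting argument can be avoided: $\alpha$ lifts to an $\SU(2)=\Spin(3)$-bundle $Q$, the algebra $\Ha=Q\times_{\SU(2)}\Hh$ is globally split, and the symbol map may be compared directly with the Thom isomorphism of the associated quaternionic line bundle $H$.
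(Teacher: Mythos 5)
Your construction of the map and the identification of its target agree with the paper, but your proof of bijectivity takes a genuinely different route, and that route has a gap at its central step. The paper does not argue by Mayer--Vietoris at all: it quotes the Karoubi--Segal periodicity theorem (Proposition \ref{KS}), which identifies the Grothendieck group of graded $C(\zeta)$-module bundles with $\KO^0_{\Zz/2}(L\otimes\zeta)$, specialises to $\zeta=\Rr e\oplus\alpha$, and then shows by a short algebraic argument that \emph{positive} graded $C(\Rr e\oplus\alpha)$-modules are exactly the ungraded $\Ha$-modules, via the embedding of $\Ha=\Rr 1\oplus\alpha$ into $C_0(\Rr e\oplus\alpha)$ as $\Rr 1\oplus e\omega\alpha$ and the splitting induced by the central involution $\omega$. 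In other words, the paper delegates the entire ``twisted Bott periodicity'' content to a known Clifford-module theorem and only supplies the dictionary between $\Ha$-modules and Clifford modules. You instead propose to reprove that periodicity by patching the untwisted isomorphism $\KSp^0(X)\cong\KO^{-4}(X)$ over a cover trivialising $\alpha$.

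The gap is in the Mayer--Vietoris step. To run the five lemma you need \emph{both} sides to carry exact Mayer--Vietoris sequences and the symbol map to commute with the connecting homomorphisms. For the target ($\KO$ with compact supports on the total space of $\Ha$) this is standard; for the source it is not. $\KSp_\alpha^0(X)$ is defined only as a Grothendieck group of bundles: to obtain a long exact sequence you must first extend it to a twisted cohomology theory --- construct relative and nonzero-degree groups, prove exactness and excision, and verify that the degree-zero term of the resulting theory is the bundle-theoretic group you started with. Carrying this out is essentially equivalent in content to the Karoubi--Segal theorem the paper invokes, so the step you present as formal is where all the work lives. (The equivariance issue you flag is, by contrast, not a real obstacle: the symbol map is defined globally and without choices, so over a trivialising piece it automatically restricts to the standard quaternionic Bott class; the only thing to check there is the classical computation you describe.) A complete argument along your lines would require building the twisted $\KSp_\alpha$-theory explicitly, e.g.\ from the principal $\TSp(\Hh^n)$-bundle picture of Lemma \ref{TSp} and a fibrewise representing spectrum; the paper's reduction to Clifford modules avoids this entirely.
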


\begin{rem}
The $K$-groups give a functor $\alpha\mapsto \KSp_\alpha^0(X)$
from $\mathcal{M}(X)$ to the category of abelian groups arising from
the functor $\mu\otimes_{\Hb}$ described in Proposition \ref{Morita_eq}.
So a Morita equivalence $\Ha\otimes\Hb^\op \to \End_{\Rr}(\mu)$
determines an isomorphism $\KSp^0_\beta (X) \to \KSp^0_\alpha (X)$,
which translates into the Bott isomorphism
$$
\KO^0(\Hh_\beta ) \to \KO^0(\Hh_\alpha )
$$
given by an associated spin structure for the virtual real vector
bundle $\Hh_\alpha -\Hh_\beta =\alpha -\beta$.
\end{rem}

We shall derive Theorem \ref{KSpalpha} from the following two propositions.
The first is the Karoubi-Segal periodicity theorem as given
in \cite{Z2}, Theorem 6.1. See also \cite{AD}, Theorem 3.3 and \cite{K},
pages 193--194. In the statement $L$ is the representation
$\Rr$ of $\Zz /2$ with the action of the generator as multiplication by $-1$.

\begin{prop}\label{KS}
Let $\zeta$ be a real vector bundle over $X$. Then there is an isomorphism
from the Grothendieck group $\KO_{C(\zeta )} (X)$
of graded $C(\zeta )$-modules, over the
Clifford algebra $C(\zeta )$ of a positive-definite inner product on
$\zeta$, to the $\Zz /2$-equivariant $\KO$-group of the total space
of the $\Zz /2$-equivariant vector bundle $L\otimes\zeta$:
$$
\KO_{C(\zeta )} (X) \to \KO_{\Zz /2}^0(L\otimes\zeta ),
$$
given by mapping the class of a graded (left) $C(\zeta )$-module 
$\mu=\mu_0\oplus\mu_1$ to the element represented by the linear map
$$
x\mapsto vx : \pi^*\mu_0 \to L\otimes\pi^*\mu_1
$$
over $v\in L\otimes \zeta$, where $\pi:\zeta\to X$ is the projection.
\end{prop}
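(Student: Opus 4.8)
The plan is to verify first that the assignment is well defined and then that it is a bijection by reducing to the classical untwisted case. Given a graded module $\mu=\mu_0\oplus\mu_1$, pull it back along $\pi$ to the total space of $L\otimes\zeta$ and form the Clifford symbol $\sigma_v\colon x\mapsto vx$, a bundle map $\pi^*\mu_0\to L\otimes\pi^*\mu_1$. Because $v\cdot(v\cdot x)=v^2x=-|v|^2x$, the symbol is an isomorphism for every $v\neq 0$, so it is exact off the zero section and represents a class in the compactly supported group $\KO_{\Zz/2}^0(L\otimes\zeta)$. The generator of $\Zz/2$ acts on $L\otimes\zeta$ by $v\mapsto -v$ and on the target $L\otimes\pi^*\mu_1$ through the sign on $L$ only; hence $\sigma_{-v}=-\sigma_v$ is carried back to $\sigma_v$ and the symbol is genuinely $\Zz/2$-equivariant, which is precisely the role played by the factor $L$. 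Finally one checks that the construction descends to the Grothendieck group: it is additive under $\oplus$, and a degenerate module --- one carrying an extra odd generator anticommuting with the $C(\zeta)$-action, i.e.\ extending to a $C(\zeta\oplus\Rr)$-module --- yields a symbol with an explicit compactly supported null-homotopy, and so the zero class. These are exactly the relations defining $\KO_{C(\zeta)}(X)$.

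For bijectivity I would compare the two sides as functors on the compact base $X$. First record naturality: a map $X'\to X$ together with the pulled-back bundle $\zeta$ turns the assignment into a natural transformation, and both $X\mapsto\KO_{C(\zeta)}(X)$ and $X\mapsto\KO_{\Zz/2}^0(L\otimes\zeta)$ fit into Mayer--Vietoris exact sequences for closed decompositions of $X$ (working with the evident relative versions over subspaces). Since $X$ is compact, cover it by finitely many closed sets over each of which $\zeta$ is trivial and over which the bundle of Clifford algebras, together with its category of graded modules, trivializes compatibly. An induction on the number of pieces in this cover, with the five lemma applied to the Mayer--Vietoris ladder, reduces the global statement to the case $\zeta=X\times\Rr^n$, and then a contraction of $X$ reduces it to the statement over a single point.

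The base case, $X$ a point and $\zeta=\Rr^n$, is the classical real Atiyah--Bott--Shapiro construction together with $\Zz/2$-equivariant Bott periodicity: it identifies the Grothendieck group of graded $C_n$-modules with $\KO_{\Zz/2}^0(L\otimes\Rr^n)=\tildeKO^0_{\Zz/2}((L\otimes\Rr^n)^+)$. The multiplicativity $C(\zeta_1\oplus\zeta_2)\cong C(\zeta_1)\otimes C(\zeta_2)$ (graded tensor product), paired with the external product in equivariant $\KO$-theory, is what makes the inductive step compatible and lets the dimension of $\zeta$ be built up one generator at a time.

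I expect the main obstacle to be the globalization rather than the algebra over a point: one must check that the two functors behave as genuine (relative) cohomology theories on $X$ and that the symbol map is a morphism between them, and that the local trivializations of the twisted algebra bundle $C(\zeta)$ patch well enough for the five-lemma induction to close. Promoting the pointwise Bott isomorphism to a statement over the nontrivial bundle $\zeta$ is exactly the content packaged in the cited Karoubi--Segal periodicity theorem.
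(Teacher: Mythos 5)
The paper itself offers no proof of this statement: it is quoted as the Karoubi--Segal periodicity theorem, with references to \cite{Z2} (Theorem 6.1), \cite{AD} (Theorem 3.3) and \cite{K}, so there is no internal argument to compare yours against line by line. Your skeleton --- check the symbol is well defined, then reduce by a Mayer--Vietoris induction over a finite closed cover to a trivial bundle, and finally to a point where an equivariant periodicity statement is invoked --- is the standard route and is broadly how the cited sources proceed.

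There is, however, a genuine error in your well-definedness step, and it sits exactly at the point where the equivariant statement differs from the familiar non-equivariant Atiyah--Bott--Shapiro one. You assert that a graded module extending to a $C(\zeta\oplus\Rr)$-module yields a symbol with a compactly supported null-homotopy, and that ``these are exactly the relations defining $\KO_{C(\zeta)}(X)$''. Neither half is correct. The group in the statement is the honest Grothendieck group of graded modules, with no degeneracy quotient; and the would-be null-homotopy $x\mapsto vx+t\,ex$ is not $\Zz/2$-equivariant: for its value to land in $L\otimes\pi^*\mu_1$ the parameter $t$ must be taken in $L$, the involution sends $t$ to $-t$, and the only invariant choice is $t=0$. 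The entire purpose of the twist by $L$ is that degenerate modules \emph{survive}: their classes are detected on the fixed-point set of $L\otimes\zeta$, which is the zero section. Concretely, for $X$ a point and $\zeta=\Rr$ the left-hand side is $\Zz$, generated by the free module $C_1$ (which is degenerate in the ABS sense), and $\KO^0_{\Zz/2}(L\otimes\Rr)\cong\Zz$, generated by a class restricting to $1-L$ over the fixed point; the ABS quotient would give only $\Zz/2$. So if you impose your relation, the symbol map no longer factors through the group you describe, and your base case, phrased as ``the classical real Atiyah--Bott--Shapiro construction,'' computes the wrong group. The correct base case is Karoubi's identification of $\KO^0_{\Zz/2}(L\otimes\Rr^n)$ with the full Grothendieck group of graded $C_n$-modules; this is also what the paper needs downstream, since Theorem \ref{KSpalpha} identifies the plain Grothendieck group of $\Hh_\alpha$-bundles, not a quotient of it.
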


Suppose that an orthogonal vector bundle $\zeta$ has dimension $4k$
and is oriented. Then we may define a central involution
$\omega_x\in C_0(\zeta_x)$ by $\omega_x = e_1\cdots e_{4k}$,
where $e_1,\ldots ,e_{4k}$ is any positively oriented orthonormal
basis of the fibre $\zeta_x$ at $x\in X$.
Then any graded $C(\zeta )$-module $\mu=\mu_0\oplus \mu_1$ splits as 
a direct sum of two graded submodules $\mu^+=\mu^+_0\oplus\mu^+_1$
and $\mu^-=\mu^-_0\oplus\mu^-_1$ such that $\omega_x$ acts as  identity 
in the fibres of $\mu^+_0$ and $\mu^-_1$, and as multiplication
by $-1$ in the fibres of $\mu^-_0$ and $\mu^+_1$. We will say that $\mu$ is {\it positive}
if $\mu =\mu^+$, and {\it negative}, if $\mu=\mu^-$. Then the Grothendieck
group of graded $C(\zeta)$-modules splits as a sum of the Grothendieck
groups of positive and negative modules $\KO^+_{C(\zeta)}(X)\oplus
\KO^-_{C(\zeta)}(X)$. The periodicity theorem (\cite{Z2}, Proposition 6.3
or \cite{Eu}, Proposition 3.1) gives an isomorphism between
$\KO^0_{\Zz/2}(L\otimes\zeta)$ and $\KO^0_{\Zz/2}(\zeta)$. This, combined 
with the Karoubi-Segal theorem,  establishes

\begin{prop}\label{KOplus}
Let $\zeta$ be an oriented real vector bundle over $X$ with dimension
a multiple of $4$. Then there is an isomorphism
from the Grothendieck group of positive $C(\zeta )$-modules to
the $\KO$-theory of the total space of $\zeta$
$$
\KO^+_{C(\zeta )}(X) \to \KO^0(\zeta ),
$$
given by mapping the positive $C(\zeta)$-module $\mu$ 
to the linear map
$$
x\mapsto vx : \pi^*\mu_0 \to \pi^*\mu_1
$$
over $v\in\zeta$.
\end{prop}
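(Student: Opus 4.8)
The plan is to assemble the two cited results with the $\omega$-splitting recorded just above the statement. First I would note the Clifford-algebra facts that make the central involution available: since $\dim\zeta = 4k$, the volume element $\omega_x = e_1\cdots e_{4k}$ satisfies $\omega_x^2 = 1$ and anticommutes with each $e_i$, hence commutes with all of $C_0(\zeta_x)$; this is exactly where the hypothesis $\dim\zeta\equiv 0 \pmod 4$ enters (for $\dim\zeta\equiv 2$ one would instead get $\omega_x^2 = -1$). Thus $\omega$ is a globally defined central involution on graded $C(\zeta)$-modules, the decomposition $\KO_{C(\zeta)}(X) = \KO^+_{C(\zeta)}(X)\oplus\KO^-_{C(\zeta)}(X)$ is legitimate, and $\omega$ acts as $+1$ on $\mu^+_0$ and as $-1$ on $\mu^+_1$. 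A one-line computation ($\omega(vx) = -v(\omega x)$ for $v\in\zeta$) shows that Clifford multiplication by $v$ carries $\mu^+_0$ into $\mu^+_1$, so that for a positive module the symbol $x\mapsto vx$ does land in $\pi^*\mu^+_1 = \pi^*\mu_1$ and is invertible for $v\neq 0$.

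Next I would apply the Karoubi--Segal theorem (Proposition~\ref{KS}) to identify $\KO_{C(\zeta)}(X)$ with $\KO^0_{\Zz/2}(L\otimes\zeta)$, and then the periodicity theorem (\cite{Z2}, Proposition~6.3; \cite{Eu}, Proposition~3.1) to identify the latter with $\KO^0_{\Zz/2}(\zeta)$, where now $\Zz/2$ acts trivially on the total space of $\zeta$. Because the action is trivial, $\KO^0_{\Zz/2}(\zeta)$ splits along the two irreducible real representations of $\Zz/2$, the trivial representation and the sign representation $L$, yielding $\KO^0_{\Zz/2}(\zeta)\cong\KO^0(\zeta)\oplus\KO^0(\zeta)$ (the isotypic decomposition, i.e. the $RO(\Zz/2)$-module structure). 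The substance of the argument is to check that under these isomorphisms the $\omega$-eigenspace splitting of $\KO_{C(\zeta)}(X)$ matches this isotypic splitting, with $\KO^+_{C(\zeta)}(X)$ carried onto the summand indexed by the trivial representation, which is the non-equivariant $\KO^0(\zeta)$.

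Finally I would track the explicit symbol along the chain. For a positive module the relation $\omega|_{\mu_1} = -1$ lets one identify $L\otimes\pi^*\mu_1$ equivariantly with $\pi^*\mu_1$ (the $-1$ of $L$ absorbed by the $-1$ action of $\omega$), and correspondingly strips the $L$-twist from the variable $v$. Hence the Karoubi--Segal symbol $x\mapsto vx:\pi^*\mu_0\to L\otimes\pi^*\mu_1$ over $v\in L\otimes\zeta$ becomes precisely $x\mapsto vx:\pi^*\mu_0\to\pi^*\mu_1$ over $v\in\zeta$, which is the map asserted in the statement.

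I expect the main obstacle to be the equivariant bookkeeping of the middle paragraph: verifying that the algebraically defined $\omega$-splitting corresponds, after the Karoubi--Segal and periodicity isomorphisms, exactly to the representation-theoretic (trivial-versus-sign) splitting of $\KO^0_{\Zz/2}(\zeta)$, and in particular that the positive summand is carried into honest non-equivariant $\KO^0(\zeta)$ rather than into the $L$-twisted summand. The remaining steps---the Clifford identities for $\omega$ and the untwisting of $L$ in the symbol---are routine.
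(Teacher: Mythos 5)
Your proposal follows exactly the paper's route: the paper's own argument consists of the paragraph preceding the statement (the central involution $\omega$ and the resulting positive/negative splitting of $\KO_{C(\zeta)}(X)$) together with the one-line remark that the periodicity isomorphism $\KO^0_{\Zz/2}(L\otimes\zeta)\cong\KO^0_{\Zz/2}(\zeta)$ combined with Proposition~\ref{KS} establishes the result. You in fact supply more detail than the paper does --- the checks that $\omega^2=1$, that $\omega$ is central in $C_0(\zeta)$, and that $v\cdot\mu_0^+\subseteq\mu_1^+$, the isotypic decomposition $\KO^0_{\Zz/2}(\zeta)\cong\KO^0(\zeta)\oplus\KO^0(\zeta)$, and the tracking of the symbol are all left implicit there --- so the ``main obstacle'' you flag is one the paper silently passes over as well.
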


\begin{proof}[Proof of Theorem \ref{KSpalpha}] 
We apply the previous proposition with 
$\zeta =\Rr\oplus\alpha$. It will be convenient to name the generator of the 
first summand and write $\zeta =\Rr e\oplus\alpha$. We show that positive 
graded $C(\zeta)$-modules correspond to (ungraded) $\Hh_\alpha$-modules.

Indeed, let $\mu =\mu_0\oplus\mu_1$ be a positive graded 
$C(\Rr e\oplus\alpha)$-module.
Put $\xi =\mu_0$. The inclusion of $\Hh_\alpha =\Rr 1\oplus\alpha$ in 
$C_0(\Rr e\oplus\alpha)$
as $\Rr 1\oplus e\omega \alpha$ gives $\xi$ an $\Hh_\alpha$-structure.
Now $C_0(\Rr e\oplus\alpha ) = \Hh_\alpha \oplus \Hh_\alpha \omega$,
which, as a ring, is the product $\Hh_\alpha\times\Hh_\alpha$ (with the
factors corresponding to the idempotents $(1\pm\omega )/2$).
In the opposite direction, given an $\Ha$-bundle $\xi$, put $\mu_0=\xi$.
The inclusion $\Ha\subseteq C_0(\Rr e\oplus\alpha)$ described above 
and the action of $\omega$ as  the identity give $\mu_0$ the structure 
of a $C_0(\zeta)$-module, which extends uniquely to a positive graded 
$C(\zeta)$-module $\mu$.
\end{proof}

\section{Characteristic classes}

In this section we introduce characteristic classes for $\Ha$-bundles,
then describe their properties and relation to the Stiefel-Whitney, 
Euler and Pontryagin characteristic classes. 

Given an $\Ha$-module $\xi$ of dimension $n$ over $X$, we have an associated
projective bundle $\Ha P(\xi )$ over $X$
and a Hopf $\Ha$-line bundle $H$ (which we endow with the
canonical orientation). The following proposition
constructs characteristic classes $\da_i(\xi)$.

\begin{thm}\label{da}
For every $\Ha$-bundle $\xi$ of dimension $n$ there are uniquely determined
classes $\da_i(\xi)\in H^{4i}(X;\Zz)$, $1\le i\le n$ such that
the integral cohomology ring of $\Ha P(\xi)$ is given by
$$
H^*(\Hh_\alpha P(\xi);\Zz) =
H^*(X)[t]/
(t^n - \da_1(\xi )t^{n-1} + \cdots + (-1)^n \da_n(\xi )),
$$
where $t=e(H)\in H^4(\Hh_\alpha P(\xi );\Zz)$ is the Euler class 
of the Hopf bundle $H$.
\end{thm}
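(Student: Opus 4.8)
The plan is to prove the statement by the Leray--Hirsch theorem, exactly as in the classical case of complex projective bundles and Chern classes. The projective bundle $\pi\colon\Ha P(\xi)\to X$ is a fibre bundle with fibre the quaternionic projective space $\Hh P^{n-1}$, whose integral cohomology is the truncated polynomial ring $\Zz[x]/(x^n)$ with a single generator $x\in H^4(\Hh P^{n-1};\Zz)$; in particular $1,x,\ldots,x^{n-1}$ is a $\Zz$-basis of $H^*(\Hh P^{n-1};\Zz)$ as a free $\Zz$-module. Over $\Ha P(\xi)$ we have the tautological Hopf $\Ha$-line bundle $H$, which by Remark \ref{orientation} is a canonically oriented $4$-dimensional real vector bundle, so that its Euler class $t=e(H)\in H^4(\Ha P(\xi);\Zz)$ is a well-defined integral class.

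First I would check the hypothesis of Leray--Hirsch, namely that the restriction of $t$ to each fibre is a generator of $H^4(\Hh P^{n-1};\Zz)$. The restriction of $H$ to a fibre is the usual quaternionic Hopf bundle over $\Hh P^{n-1}$, so it suffices to treat the case $n=2$, where $\Hh P^1=S^4$ and the quaternionic Hopf bundle is the rank-$4$ bundle underlying the Hopf fibration $S^7\to S^4$; its Euler number is $\pm 1$, and the canonical orientation of $H$ fixes the sign. Consequently $t$ restricts on each fibre to a generator of $H^4$, and the classes $1,t,\ldots,t^{n-1}$ restrict fibrewise to the $\Zz$-basis $1,x,\ldots,x^{n-1}$.

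With this in hand, the Leray--Hirsch theorem applies and shows that $H^*(\Ha P(\xi);\Zz)$ is a free module over $H^*(X;\Zz)$ with basis $1,t,\ldots,t^{n-1}$. In particular $t^n$ can be expressed uniquely as an $H^*(X)$-linear combination of $1,t,\ldots,t^{n-1}$; writing this combination as
$$
t^n=\da_1(\xi)\,t^{n-1}-\da_2(\xi)\,t^{n-2}+\cdots+(-1)^{n-1}\da_n(\xi),
$$
with $\da_i(\xi)\in H^{4i}(X;\Zz)$ (the degree being forced since $t\in H^4$), defines the classes $\da_i(\xi)$ and gives precisely the stated presentation of the cohomology ring. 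Uniqueness of the $\da_i(\xi)$ is immediate from the freeness of the module, and the surjection $H^*(X)[t]\to H^*(\Ha P(\xi))$ together with the fact that the displayed relation is monic of degree $n$ shows that this single relation generates all relations, so the ring is indeed $H^*(X)[t]/(t^n-\da_1(\xi)t^{n-1}+\cdots+(-1)^n\da_n(\xi))$.

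I expect the only real point requiring care to be the verification that $H$ is globally defined as an oriented rank-$4$ bundle and that its Euler class restricts to a \emph{generator} of the fibre cohomology, rather than to a proper multiple; everything after the application of Leray--Hirsch is formal linear algebra over $H^*(X)$. A secondary point is to confirm that integer (rather than $\Zz/2$) coefficients are legitimate here, which is guaranteed by the existence of the global integral classes $t^i$ forcing the collapse of the Serre spectral sequence of $\pi$.
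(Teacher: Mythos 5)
Your argument is correct and is exactly the paper's proof: the authors also deduce the statement "at once from the Leray--Hirsch Theorem, because the cohomology is freely generated as an $H^*(X;\Zz)$-module by $1, t,\ldots,t^{n-1}$." You have simply spelled out the standard verifications (the fibre cohomology of $\Hh P^{n-1}$, the reduction to $\Hh P^1=S^4$ to see that $t$ restricts to a generator, and the formal extraction of the $\da_i$ from freeness) that the paper leaves implicit.
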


\begin{proof}
This follows at once from the Leray-Hirsch Theorem,
because the cohomology is freely generated as an $H^*(X;\Zz)$-module
by $1$, $t,\, \ldots ,\, t^{n-1}$.
\end{proof}                    

To derive the properties of the characteristic classes $\da_i$ we will use
a splitting principle for $\Ha$-bundles, which follows from Proposition \ref{da}
by induction, see \cite{MR1}.

\begin{prop}\label{splitting} 
For each $\Ha$-bundle $\xi$ over $X$ let $p:F(\xi)\to X$ be the bundle 
whose fibre at $x$ is the flag manifold of orthogonal splittings of $\xi_x$
as a sum $L_1\oplus L_2\oplus\dots\oplus L_n$ of $\Hh_{\alpha_x}$-lines.
Then $p^*(\xi)$ splits as a direct sum of $\Ha$-line bundles and 
$p^*:H^*(X;\Zz)\to H^*(F(\xi);\Zz)$ is injective.
\end{prop}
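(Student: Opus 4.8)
The plan is to argue by induction on the $\Ha$-dimension $n$ of $\xi$, using the projective bundle of Theorem \ref{da} to peel off one $\Ha$-line at a time. For $n=1$ there is nothing to prove, so suppose $n>1$. I would consider the projection $q:\Ha P(\xi)\to X$ together with the Hopf $\Ha$-line bundle $H\subseteq q^*\xi$. Fixing a compatible inner product on $\xi$ as in Remark \ref{metric}, I form the orthogonal complement $H^\perp$ inside $q^*\xi$, so that $q^*\xi = H\oplus H^\perp$.

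The first point to check is that $H^\perp$ is again an $\Ha$-subbundle, now of dimension $n-1$. This is where the $*$-homomorphism property of Remark \ref{metric} is essential: for $v\in H^\perp$, $w\in H$ and $r\in\Ha$ one has $\langle \rho(r)v, w\rangle = \langle v, \rho(\bar r)w\rangle$, and since $H$ is $\Ha$-invariant the right-hand side vanishes; hence $\rho(r)v\in H^\perp$. Thus $H^\perp$ carries an $\Ha$-structure and $q^*\xi=H\oplus H^\perp$ is a splitting of $\Ha$-bundles.

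Applying the inductive hypothesis to the $(n-1)$-dimensional $\Ha$-bundle $H^\perp$ over $\Ha P(\xi)$ produces a bundle over $\Ha P(\xi)$ on which $H^\perp$ (hence the pullback of $\xi$) splits as a sum of $\Ha$-lines, with injective pullback on cohomology. I would then identify the iterated construction with the flag bundle $F(\xi)\to X$: a point of $F(\xi)_x$ is an ordered orthogonal decomposition $\xi_x=L_1\oplus\cdots\oplus L_n$ into $\Hh_{\alpha_x}$-lines, which is exactly the data of a point $L_1\in\Ha P(\xi)_x$ followed by a flag of the orthogonal complement $L_1^\perp$. This yields the asserted splitting of $p^*\xi$.

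Finally, for injectivity of $p^*$ I would compose the injections at each stage. By Theorem \ref{da} the cohomology $H^*(\Ha P(\xi);\Zz)$ is a free $H^*(X;\Zz)$-module on $1,t,\ldots,t^{n-1}$, so $q^*$ is a split monomorphism; the inductive hypothesis supplies injectivity of the remaining pullback, and the composite is therefore injective. The main obstacle is the bookkeeping in the inductive step — verifying that the orthogonal complement stays an $\Ha$-subbundle and that the tower of projective bundles is canonically the flag bundle $F(\xi)$; once these identifications are in place, the cohomological statement is a formal consequence of the Leray--Hirsch computation in Theorem \ref{da}.
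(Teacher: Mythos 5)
Your argument is correct and is exactly the route the paper intends: the paper gives no detailed proof, stating only that the splitting principle ``follows from [the projective bundle theorem] by induction'' with a reference to Marchiafava--Romani, and your induction via the Hopf line $H\subseteq q^*\xi$, its orthogonal complement (kept $\Ha$-invariant by the $*$-homomorphism property of Remark \ref{metric}), and the Leray--Hirsch free-module structure is the standard way to carry that out. No gaps.
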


To shorten our notation put
$d^{\alpha}(\xi)=1+\da_1(\xi)+\da_2(\xi)+\dots+\da_n(\xi)$. 
We also write $\da_0(\xi )=1$.
The classes $\da(\xi)$ have 
the properties that one would expect and determine the other
characteristic classes of $\xi$ as a real vector bundle.

\begin{thm}\label{properties}
{\rm (a)} The classes $\da$ are multiplicative, i.e.
$\da(\xi\oplus\xi')=\da(\xi)\da(\xi')$ for $\Ha$-vector bundles $\xi$ and 
$\xi'$.
\newline
{\rm (b)} If $\xi$ is an $\Ha$-bundle of dimension $n$ with the canonical 
orientation, then its Euler class is $e(\xi)=\da_n(\xi)$.
\newline 
{\rm (c)} The Stiefel-Whitney classes of $\xi$ are
$$1+w_1(\xi)+w_2(\xi)+\dots+w_{4n}(\xi)=\sum_{i=0}^n
\left((1+w_2(\alpha)+w_3(\alpha)\right)^{n-i}\,\da_i(\xi).$$
In particular, $w_2(\xi)=nw_2(\alpha)$. 
\newline
{\rm (d)} 
The Chern classes of the complexification of $\xi$ {\rm (}and so the
Pontryagin classes $p_i(\xi )=(-1)^ic_{2i}(\Cc\otimes\xi )${\rm )} are given by
$$
\sum_{k=0}^{4n} c_k(\Cc\otimes\xi)
=\sum_{i,\,j =0}^n q_{n-i,n-j}(\alpha )\da_i(\xi)\da_j(\xi ),
$$
where the classes $q_{i,j}(\alpha )$ are determined by the
generating function
$$
\sum_{i,\, j\geq 0} q_{i,j}(\alpha )s^it^j =
\frac{(1-s)(1-t)+(e(\alpha )^2-p_1(\alpha))st}
{((1-s)^2+(e(\alpha )^2-p_1(\alpha))s^2) 
((1-t)^2+(e(\alpha )^2-p_1(\alpha))t^2)}.
$$
\end{thm}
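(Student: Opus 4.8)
The engine for all four parts will be the splitting principle of Proposition \ref{splitting}. The plan is to pull back along $p:F(\xi)\to X$, which is injective on cohomology (over $\Zz$ by Proposition \ref{splitting}, and over $\Zz/2$ by the same Leray--Hirsch argument), and thereby assume $\xi$ splits as $L_1\oplus\cdots\oplus L_n$ with each $L_k$ an $\Ha$-line. First I would record two facts about a single line: from Theorem \ref{da} (the projective bundle of a line is $X$ and $H=L$) one reads off $\da_1(L)=e(L)$, the Euler class of $L$ as an oriented $4$-plane bundle; and for a split bundle the defining relation of Theorem \ref{da} becomes $\prod_k(t-t_k)=0$ with $t_k:=e(L_k)$, whence $\da_i(\xi)=\sigma_i(t_1,\ldots,t_n)$, the $i$-th elementary symmetric polynomial in the roots. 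Part (a) is then immediate, since the roots of $\xi\oplus\xi'$ are those of $\xi$ together with those of $\xi'$. For part (b) I would use that the Euler class is multiplicative and agrees with $\da_1$ on lines, so $e(\xi)=\prod_k e(L_k)=\da_n(\xi)$; the only point to watch is that the canonical orientation of Remark \ref{orientation} is exactly the one used to orient $H$, so the two Euler classes coincide.

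For part (c) the plan is to reduce mod $2$ and compute the total Stiefel--Whitney class of a single $\Ha$-line. Here $w_1(L)=0$ ($L$ is oriented), $w_2(L)=w_2(\alpha)$ by Proposition \ref{Ha_Hb}, $w_3(L)=\operatorname{Sq}^1 w_2(L)=\operatorname{Sq}^1 w_2(\alpha)=w_3(\alpha)$ by Wu's relation $\operatorname{Sq}^1 w_2=w_1w_2+w_3$, and $w_4(L)=e(L)\bmod 2$ as the top class. Thus $w(L)=1+w_2(\alpha)+w_3(\alpha)+t_k$ mod $2$, and writing $a=1+w_2(\alpha)+w_3(\alpha)$ and multiplying over the summands gives
$$
w(\xi)=\prod_k(a+t_k)=\sum_{i=0}^n a^{n-i}\,\da_i(\xi),
$$
which is the claimed formula; its degree-$2$ part yields $w_2(\xi)=n\,w_2(\alpha)$.

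Part (d) is the substantial one, and the main work is again a single-line calculation of $c(\Cc\otimes_\Rr L)$. The even Chern classes are Pontryagin classes and come for free: $c_2(\Cc\otimes L)=-p_1(L)=2e(L)-p_1(\alpha)$ by Lemma \ref{p1}, and $c_4(\Cc\otimes L)=p_2(L)=e(L)^2$ since $L$ has rank $4$; also $c_1=0$ as $L$ is oriented. The delicate term is the $2$-torsion class $c_3(\Cc\otimes L)\in H^6$, which I claim equals $e(\alpha)^2$. To prove this I would pass to the universal oriented $4$-plane bundle $\mu$ over $B\SO(4)$, where $e(\alpha)$ is the integral third Stiefel--Whitney class $W_3=W_3(\alpha)$; both $c_3(\Cc\otimes\mu)$ and $W_3^2$ are $2$-torsion, and their mod-$2$ reductions agree, each equal to $w_3(\alpha)^2$ (for $c_3$ this uses $c_k(\Cc\otimes V)\equiv\sum_i w_i(V)w_{2k-i}(V)$ mod $2$, which for $k=3$ and rank $4$ collapses to $w_3^2$). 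Since all torsion in $H^*(B\SO(4);\Zz)$ has order $2$, mod-$2$ reduction is injective on the torsion subgroup, forcing $c_3(\Cc\otimes\mu)=W_3^2$ and hence $c_3(\Cc\otimes L)=e(\alpha)^2$ in general. Assembling the pieces gives, for one line,
$$
c(\Cc\otimes L)=(1+e(L))^2+c,\qquad c:=e(\alpha)^2-p_1(\alpha).
$$

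Finally I would assemble the general formula by multiplying over the summands, $c(\Cc\otimes\xi)=\prod_k\bigl((1+t_k)^2+c\bigr)$, introducing a formal $\lambda$ with $\lambda^2=-c$ so that each factor splits as $((1-\lambda)+t_k)((1+\lambda)+t_k)$, and then expanding
$$
c(\Cc\otimes\xi)=\Bigl(\sum_i(1-\lambda)^{n-i}\da_i(\xi)\Bigr)\Bigl(\sum_j(1+\lambda)^{n-j}\da_j(\xi)\Bigr)=\sum_{i,j}q_{n-i,n-j}(\alpha)\,\da_i(\xi)\,\da_j(\xi),
$$
where $q_{a,b}(\alpha)$ is the $\lambda\mapsto-\lambda$ symmetrization of $(1-\lambda)^a(1+\lambda)^b$ (the double sum is unchanged by symmetrizing) --- a polynomial in $\lambda^2=p_1(\alpha)-e(\alpha)^2$, hence a genuine integral class. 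Summing the resulting bigenerating series, with $(1-\lambda)(1+\lambda)=1+c$ and $(1-\lambda)+(1+\lambda)=2$, should reproduce verbatim the stated closed form for $\sum q_{i,j}(\alpha)s^it^j$. I expect the one genuine obstacle to be the single-line torsion computation $c_3(\Cc\otimes L)=e(\alpha)^2$; the even Chern classes and the concluding symmetric-function manipulation are routine.
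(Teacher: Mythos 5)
Your proposal is correct and follows essentially the same route as the paper: the splitting principle reduces everything to a single $\Ha$-line, where the paper records exactly the same four Chern classes of $\Cc\otimes\mu$ and then performs the identical formal factorization (the paper's substitution $a=r+s$, $b=rs$ is your $1\pm\lambda$). The only differences are that you actually justify $c_3(\Cc\otimes L)=e(\alpha)^2$ via the $2$-torsion argument over $B\SO(4)$, which the paper asserts without proof, and that in (c) you use the splitting principle, which the paper itself offers as its parenthetical alternative to the quotient map $\Rr P(\xi)\to\Hh_\alpha P(\xi)$.
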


\begin{proof}
(a) The proof is the same as in the case of the Stiefel-Whitney  and
Chern classes. 

(b) If $\mu$ is an $\Ha$-line, then $\da_1(\mu)=e(\mu)$ by definition. Now
using the multiplicativity and the splitting principle, we get 
$\da_n(\xi)=e(\xi)$.

(c) 
There is a quotient map $\pi :\Rr P(\xi ) \to \Hh_\alpha P(\xi )$
from the  real projective bundle to the $\Ha$-projective bundle. 
The Hopf $\Ha$-bundle
$H$ lifts to the tensor product $\Hh_\alpha\otimes H_\Rr$ with the real 
Hopf bundle $H_{\Rr}$.
So
$$
\pi^*(t)= x^4 + x^2w_2(\alpha ) +xw_3(\alpha ),
$$
where $x=w_1(H_\Rr )$. Comparing our definition of $\da$ with 
the corresponding
definition of the Stiefel-Whitney classes (or using the splitting principle), 
we get our formula.

(d) 
For an $\Ha$-line bundle $\mu$ we have
$$
c_1(\Cc\otimes\mu )=0,\quad
c_2(\Cc\otimes \mu )= 2 e(\mu)-p_1(\alpha ), \quad 
c_3(\Cc\otimes \mu )=e(\alpha )^2,\quad
c_4(\Cc\otimes\mu )= e(\mu )^2.
$$
From the splitting principle and multiplicativity,
we can express the total Chern class of $\Cc\otimes\xi$ as
$$
\prod_{j=1}^n
\left(y_j^2+2ay_j +b\right),
$$
where $\da_i(\xi)$ is the $i$-th elementary symmetric polynomial in
$y_1,\,y_2,\,\dots,\,y_n$, $a=1$ and $b=1+ e(\alpha )^2-p_1(\alpha)$. 
The stated formula is obtained by computing in the polynomial ring
$\Zz[a,b][y_1,u_2,\dots,y_n]$ on formal variables $a$, $b$, $y_j$ by
embedding $\Zz[a,b]$ as a subring of the polynomial ring $\Qq[r,s]$,
where $a=r+s$ and $b=rs$.
\end{proof}

\begin{cor}
If $n$ is even and $\xi$ admits an $\Hh_\alpha$-structure for some
$\alpha$, then $\xi$ is spin.
If $n$ is odd and $\xi$ admits an $\Hh_\alpha$-structure, then
$w_2(\xi )=w_2(\alpha )$.
\qed
\end{cor}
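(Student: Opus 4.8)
The plan is to read both assertions directly off part (c) of Theorem~\ref{properties}, which already records the identity $w_2(\xi)=n\,w_2(\alpha)$ in $H^2(X;\Zz/2)$. Since a bundle is spin precisely when both $w_1$ and $w_2$ vanish, the one point that genuinely requires attention — and the only place where the even case is more than a bare restatement of (c) — is the vanishing of $w_1(\xi)$.

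First I would observe that every $\Ha$-bundle $\xi$ carries the canonical orientation described in Remark~\ref{orientation}; in particular $\xi$ is orientable, so $w_1(\xi)=0$. This disposes of the first Stiefel--Whitney class uniformly, regardless of the parity of $n$.

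Next I would invoke Theorem~\ref{properties}(c) to obtain $w_2(\xi)=n\,w_2(\alpha)$. Working with $\Zz/2$ coefficients, the integer $n$ acts through its residue: when $n$ is even the product $n\,w_2(\alpha)$ vanishes, so $w_2(\xi)=0$, and together with $w_1(\xi)=0$ this exhibits $\xi$ as spin; when $n$ is odd the coefficient is $1$, so $w_2(\xi)=w_2(\alpha)$ immediately.

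I do not anticipate any real obstacle here, as the substance of the corollary is already contained in Theorem~\ref{properties}(c) and the remaining argument is a two-line deduction. The only subtlety worth flagging is not to overlook the requirement $w_1(\xi)=0$ in the definition of spin: this is supplied by the canonical orientation of an $\Ha$-bundle, not by the formula for $w_2$.
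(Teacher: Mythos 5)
Your proposal is correct and follows essentially the same route as the paper, which states the corollary with an immediate \qed precisely because it is a direct reading of the identity $w_2(\xi)=n\,w_2(\alpha)$ from Theorem \ref{properties}(c). Your extra remark that $w_1(\xi)=0$ comes from the canonical orientation of an $\Hh_\alpha$-bundle (Remark \ref{orientation}) is a sensible point to make explicit, though the paper leaves it tacit.
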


Now we examine the relation between the characteristic classes and 
Morita equivalence.

\begin{prop}\label{d+Me}
Let $\Ha\otimes\Hb^\op\cong\End_\Rr(\mu)$ and let $\eta$ be an $\Hb$-bundle. 
Then the characteristic classes of the $\Ha$-bundle $\xi=\mu\otimes_{\Hb}\eta$ 
are
$$
\sum_{i=0}^n \da_i(\xi) =
\sum_{i=0}^n (1+e(\mu ))^{n-i}d_i^\beta (\eta ).
$$
\end{prop}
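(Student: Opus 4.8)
The plan is to combine the splitting principle for $\Hb$-bundles (Proposition \ref{splitting}) with the additivity of the Euler class established in Lemma \ref{Euler}. First I would observe that $\xi=\mu\otimes_\Hb\eta$ has $\Ha$-dimension $n$ when $\eta$ has $\Hb$-dimension $n$, and that both sides of the asserted identity are natural in $X$ and multiplicative for direct sums of $\Hb$-bundles. Granting this, it suffices to verify the formula when $\eta$ is a single $\Hb$-line, which is exactly the situation governed by Lemma \ref{Euler}.

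I would treat the base case $n=1$ first. If $\eta$ is an $\Hb$-line, the converse part of Proposition \ref{Ha_Hb} produces an oriented $3$-dimensional bundle $\gamma=\rho_-(\eta)$ with $\Hb\otimes\Hh_\gamma^\op\cong\End_\Rr(\eta)$. Composing this Morita equivalence with the given one $\Ha\otimes\Hb^\op\cong\End_\Rr(\mu)$ presents $\xi=\mu\otimes_\Hb\eta$ as an $\Ha$-line, and Lemma \ref{Euler} then yields $e(\xi)=e(\mu)+e(\eta)$. Since the first characteristic class of a line is its Euler class (by the definition of the $\da$- and $d^\beta$-classes, as recorded in the proof of Theorem \ref{properties}(b)), the left-hand side reads $1+e(\mu)+e(\eta)$, which matches the right-hand side $(1+e(\mu))\,d_0^\beta(\eta)+d_1^\beta(\eta)$ for $n=1$.

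Next I would establish the multiplicativity of the two sides under $\eta=\eta_1\oplus\eta_2$. The left-hand side is multiplicative because $\mu\otimes_\Hb-$ preserves direct sums and the $\da$-classes are multiplicative (Theorem \ref{properties}(a)). For the right-hand side, writing $e=e(\mu)$ and expanding the product
$$
\Big(\sum_{i}(1+e)^{n_1-i}d_i^\beta(\eta_1)\Big)\Big(\sum_{j}(1+e)^{n_2-j}d_j^\beta(\eta_2)\Big)
=\sum_{k}(1+e)^{n-k}\sum_{i+j=k}d_i^\beta(\eta_1)\,d_j^\beta(\eta_2),
$$
the inner sum collapses to $d_k^\beta(\eta_1\oplus\eta_2)$ by the multiplicativity of the $d^\beta$-classes, which is the right-hand side for $\eta_1\oplus\eta_2$. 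With both sides multiplicative, I would apply Proposition \ref{splitting} to pull $\eta$ back along $p:F(\eta)\to X$ to a sum of $\Hb$-lines, noting that the Morita equivalence and the class $e(\mu)$ pull back correctly so that the whole identity is natural; since $p^*$ is injective, the split case implies the general one, and multiplicativity reduces the split case to the line case already settled.

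I expect the only delicate point to be the orientation bookkeeping in the base case: Lemma \ref{Euler} requires the canonical orientations on $\mu$, $\eta$ and $\mu\otimes_\Hb\eta$, so before invoking it I would check that these coincide with the orientations implicit in the definitions of the $\da$- and $d^\beta$-classes (the Euler classes are sign-sensitive, so this matters). Once that is confirmed, the remainder of the argument is entirely formal.
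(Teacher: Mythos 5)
Your proposal is correct and follows essentially the same route as the paper: the paper's proof likewise reduces to the $\Hh_\beta$-line case via Lemma \ref{Euler} and then invokes multiplicativity together with the splitting principle (mentioning, as an alternative, the identification of $\Hh_\alpha P(\xi)$ with $\Hh_\beta P(\eta)$ under which the Hopf bundles correspond by tensoring with $\mu$). Your version simply makes explicit the binomial bookkeeping on the right-hand side and the orientation check, both of which the paper leaves implicit.
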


\begin{proof} For an $\Hb$-line bundle $\eta$ this is Lemma
\ref{Euler}. Applying multiplicativity (or by using the equivalence between 
$\Ha P(\xi)$ and $\Ha P(\eta)$ under which the Hopf bundles correspond 
by tensoring with $\mu$), we get the formula.
\end{proof}

Using the Gysin exact sequence as in \cite{B} or in
\cite{C} one can describe the cohomology rings of $B\TSp(n)$. Denote by $\rho_2$
the reduction mod 2 and by $\Delta$ the corresponding Bockstein
homomorphism.
Let $\alpha$ and $\xi$ be the associated vector
bundles with fibres $\im\Hh$ and $\Hh^n$, respectively, to the classifying
$\TSp(n)$-principal bundle $E\TSp(n)$ over $B\TSp(n)$. Put $w_2=w_2(\alpha)$,
$p_1=p_1(\alpha)$ and $d_i=\da_i(\xi)$.

\begin{thm}\label{H*BTSp}
The cohomology rings of $B\TSp(n)$ are given by
\begin{align*}
H^*(B\TSp(n);\Zz/2)&=\Zz/2[w_2,Sq^1w_2,\rho_2d_1,\rho_2d_2,\dots,
\rho_2d_n],\\
H^*(B\TSp(n);\Zz)&=\Zz[\Delta w_2,p_1,d_1,d_2,\dots,d_n]/(2\Delta w_2).
\end{align*}
\end{thm}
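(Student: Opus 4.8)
The plan is to read off both rings from the Serre spectral sequence of a single fibration. The extension $1\to\Sp(n)\to\TSp(n)\to\Aut(\Hh)=\SO(3)\to 1$ induces a fibration
$$
B\Sp(n)\longrightarrow B\TSp(n)\xrightarrow{\ \pi\ } B\SO(3),
$$
whose base is simply connected, so there are no local-coefficient complications. I would recall the two input rings: $H^*(B\Sp(n);\Zz)=\Zz[q_1,\dots,q_n]$ with $\deg q_i=4i$ (the symplectic Pontryagin classes), and $H^*(B\SO(3);\Zz)=\Zz[p_1,\Delta w_2]/(2\Delta w_2)$ with $e(\alpha)=\Delta w_2$ in degree $3$, together with their mod $2$ reductions $H^*(B\Sp(n);\Zz/2)=\Zz/2[\rho_2q_1,\dots,\rho_2q_n]$ and $H^*(B\SO(3);\Zz/2)=\Zz/2[w_2,w_3]$, where $w_3=Sq^1w_2$ by the Wu formula (as $w_1(\alpha)=0$).

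The first substantive step is to pin down how the named classes sit in the spectral sequence. Since $\alpha$ is the pullback under $\pi$ of the tautological bundle on $B\SO(3)$, the edge homomorphism $\pi^*$ carries the base generators to $p_1=p_1(\alpha)$ and $\Delta w_2$ (integrally) and to $w_2=w_2(\alpha)$ and $Sq^1w_2=w_3(\alpha)$ (mod $2$); these lie in the bottom row and are permanent cycles. The key point is the behaviour of the classes $d_i=\da_i(\xi)$ of Theorem \ref{da} on the fibre: restricted to $B\Sp(n)$ the bundle $\alpha$ is trivial, so $\Hh_\alpha$ becomes the constant algebra $\Hh$, the projective bundle $\Hh_\alpha P(\xi)$ becomes the ordinary quaternionic projective bundle, and the defining relation $t^n-d_1t^{n-1}+\dots+(-1)^nd_n=0$ of Theorem \ref{da} becomes exactly the defining relation of the symplectic Pontryagin classes. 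Hence $d_i$ restricts to $q_i$, so each fibre generator $q_i$ is the image of a global class and is a permanent cycle. This is the step I expect to need the most care, since it is precisely what forces the a priori possible transgression $d_5$ of $\rho_2q_1$ into $H^5(B\SO(3);\Zz/2)=\Zz/2\langle w_2\,Sq^1w_2\rangle$ to vanish.

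With these facts the argument closes formally. The $E_2$-page is the tensor product $H^*(B\SO(3))\otimes H^*(B\Sp(n))$ (no $\mathrm{Tor}$ terms, as the fibre cohomology is free), which is generated as a ring by the base classes and the $q_i$; since all of these are permanent cycles, the Leibniz rule forces every differential to vanish and $E_2=E_\infty$. Finally I would lift: the classes $d_i$, $p_1$, $\Delta w_2$ (resp. $\rho_2 d_i$, $w_2$, $Sq^1 w_2$) define a ring homomorphism from the claimed polynomial ring — well defined because $2\Delta w_2=0$ already holds in $H^*(B\TSp(n);\Zz)$, being pulled back from $B\SO(3)$ — whose associated graded map is the identification of $E_\infty$ with the claimed ring. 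As the filtration is finite in each degree, an isomorphism on associated graded forces an isomorphism, yielding both formulae; this simultaneously shows that the only relation is $2\Delta w_2=0$ and that no further torsion appears, since $H^*(B\Sp(n))$ is torsion-free. The same conclusion can be reached, as the references \cite{B} and \cite{C} suggest, by iterating the Gysin sequence of the $2$-sphere bundle $S(\alpha)$ over $B\TSp(n)$, whose Euler class is $\Delta w_2$; I find the spectral-sequence route more transparent.
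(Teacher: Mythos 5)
Your argument is correct, but it is not the route the paper takes: the paper in fact gives no proof of Theorem \ref{H*BTSp}, remarking only that the rings can be obtained from the Gysin exact sequence as in the computations of $H^*(B\SO(n);\Zz)$ in \cite{B} and of twisted $H^*(B\O(n))$ in \cite{C}, and citing \cite{MR2, MR3} for the mod $2$ ring. That method runs an induction through sphere bundles, splitting off one generator at a time, and has the virtue of uniformity with those references and with twisted-coefficient situations. Your Serre spectral sequence for $B\Sp(n)\to B\TSp(n)\to B\SO(3)$ is a genuinely different decomposition and, here, a cleaner one: everything reduces to the single observation that $d_i$ restricts on the fibre to the symplectic Pontryagin class $q_i$ (since $\alpha$ trivializes over $B\Sp(n)$ and the defining relation of Theorem \ref{da} specializes to the Borel--Hirzebruch definition of the $q_i$), after which the collapse at $E_2$ is formal and both coefficient rings come out of one argument. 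You correctly identify the one dangerous differential, $d_5$ on $\rho_2q_1$ into $\Zz/2\langle w_2\,Sq^1w_2\rangle$, and kill it by globality of $d_1$; your use of freeness of $H^*(B\Sp(n);\Zz)$ to avoid $\operatorname{Tor}$ terms and your finite-filtration argument for passing from $E_\infty$ to the actual ring are both sound. The only point deserving an extra sentence is routine: $B\TSp(n)$ is not compact, so the universal classes $d_i$ must be defined by a limit over skeleta --- an issue the paper itself elides in stating the theorem.
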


The cohomology ring with $\Zz/2$ coefficients was described in \cite{MR2}
and \cite{MR3}.

\section{Complex quaternionic bundles}

In this section we will deal with  bundles of quaternion algebras which admit
as a subbundle a bundle of fields of complex numbers. Modules over them
are complex vector bundles where the complex structure extends to a 
quaternionic structure. We will characterize bundles of quaternion algebras which are 
Morita equivalent to such bundles of algebras (Theorem \ref{Hl_Hb}).  At the end we will derive 
a relation between the Chern classes and the classes $d_i^{\alpha}$ introduced 
above (Theorem \ref{Chern}).
 
We start with the description of bundles of fields of complex numbers. 
See \cite{CSS}.

\begin{Def}\label{Cd}
A principal $\Aut (\Cc )=\O (1)$-bundle  over $X$ determines an orthogonal real line
bundle $\delta$ and a bundle of complex algebras $\Cc_\delta =\Rr\oplus\delta$.
An element of the fibre $\delta_x$ with length $1$ is a square-root of
$-1$. We say that a real vector bundle $\xi$ over $X$ is a $\Cd$-bundle if
it has a $\Cd$-module structure.
\end{Def}

If $\xi$ has a $\Cd$-structure, then as in Remark \ref{metric}
there is a real inner product $g$ on $\xi$ such that the structure map
$\Cd\to\End(\xi)$ is a $*$-homomorphism, i.~e. $g(au,v)=g(u,\bar{a}v)$.
Let $\i$ be an element of $\delta_x$ of the length 1 and put
$f(u,v)=-\i g(\i u,v)$. The real bilinear form $f:\xi\times\xi\to\delta$ is
non-singular skew-symmetric and $\langle u,v \rangle=g(u,v)+f(u,v)$ is a $\Cd$-inner
product which is $\Cd$-linear in the first variable and $\Cd$-conjugate-linear 
in the second.

Conversely, if a $2n$-dimensional real vector bundle $\xi$ is equipped with
a non-singular skew-symmetric real bilinear form $\xi\times\xi\to\delta$,
one can prove that there is a $\Cd$-structure on $\xi$.  (See \cite{CSS}, 
Remark 5.5, or the proof of the similar Proposition \ref{bilin} below.)
 
Given a $2n$-dimensional vector space $V$ with a complex
structure, we can choose a compatible real inner
product on it. It enables us to introduce a subgroup
$$
\TU (V) =\{ g\in \O (V) \mid g(rv)=\kappa (r)g(v) \quad
\text{for some $\kappa\in\Aut (\Cc )$ and all $v\in V$}\}
$$
of $\O (V)$.
Thus, $\TU (V)$ consists of the $\Cc$-linear and the conjugate-linear
isometries.

As in the case of quaternionic structures, a $2n$-dimensional
orthogonal real vector bundle $\xi$ admits a $\Cc_\delta$-structure for some 
$\delta$ if and only if its structure group $\O(\Cc^n)$ can be reduced to 
$\TU (\Cc^n)$. 

Any $2$-dimensional real vector bundle $\lambda$ has a canonical
structure as a $\Cc_\delta$-line bundle, where $\delta =\det\lambda$.
Moreover, $\Cc_\delta$-line bundles over $X$ are classified by their
Euler class in the cohomology group $H^2(X;\, \Zz (\delta ))$ with
integral coefficients twisted by $\delta$. This also follows from the fact
that $\TU(\Cc)=\O(2)$.

An $n$-dimensional $\Cc_\delta$-bundle has twisted Chern classes
$c_j^\delta (\xi ) \in H^{2j}(X;\, \Zz (\delta^{\otimes j}))$ defined in the
same way as the classes $\da_j$ for an $\Ha$-bundle.
And these determine the Stiefel-Whitney classes of $\xi$:
in particular, $w_1(\xi )=nw_1(\delta )$ and $e(\xi )=c_n^\delta (\xi )$.

As in Section 3 one can show that the Grothendieck group $\K_\delta^0 (X)$ 
of $\Cc_\delta$-vector bundles is isomorphic to  
$\KO_{C(\Rr\oplus\delta)}(X)$ and, hence, to $\KO_{\Zz /2}^0(L\otimes\Cd)$, 
and then define $\KO^i_{\delta}(X)=\KO_{\Zz/2}^i(L\otimes\Cd)$
for $i\in\Zz$. See \cite{CSS}.

We now consider the situation in which a bundle of quaternions $\Hh_\alpha$
admits a bundle of fields $\Cc_\delta$ as a subalgebra.

\begin{prop}\label{Cd-Ha}
Let $\alpha$ be an $\SO (3)$-bundle. Then $\Hh_\alpha$ admits a subbundle
of the form $\Cc_\delta$ if and only if $\alpha = \delta\oplus\lambda$,
where $\lambda$ is a $2$-dimensional orthogonal real vector bundle and
$\delta
=\Lambda^2\lambda$ (or, in other words, if the structure group
of $\alpha$ can be reduced to the subgroup $\O (2)\subset\SO (3)$).
\end{prop}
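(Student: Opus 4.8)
The plan is to reduce the statement to a fibrewise fact about quaternion algebras and then track the orientation of $\alpha$. First I would identify the square roots of $-1$ in a fibre $\Hh_{\alpha_x}=\Rr 1\oplus\alpha_x$. Writing an element as $(s,u)$ with $s\in\Rr$ and $u\in\alpha_x$, the multiplication of Definition \ref{Ha} gives $(s,u)^2=(s^2-\langle u,u\rangle,\,2su)$, so $(s,u)^2=-1$ forces $s=0$ and $\langle u,u\rangle=1$. Thus the square roots of $-1$ are exactly the unit vectors of $\alpha_x$, and each such $u$ generates a subalgebra $\Rr 1\oplus\Rr u$ on which multiplication closes, since $u\times u=0$. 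Globally, a unital embedding of bundles of algebras $\Cd\hookrightarrow\Hh_\alpha$ must send the unit of $\Cd$ to $1$ and the sphere bundle of square roots of $-1$ in $\Cd$ (the unit sphere of $\delta$) into the corresponding sphere bundle in $\Hh_\alpha$ (the unit sphere of $\alpha$). Hence such an embedding is precisely the data of an isometric orthogonal line subbundle $\delta\hookrightarrow\alpha$; conversely any orthogonal line subbundle $\delta\subset\alpha$ yields the subalgebra $\Rr 1\oplus\delta\cong\Cd$ by the computation above.

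Next I would extract the orthogonal splitting and use the orientation. Given $\delta\subset\alpha$, set $\lambda=\delta^\perp$, a $2$-dimensional orthogonal bundle, so that $\alpha=\delta\oplus\lambda$ as orthogonal bundles. The content lies in the fact that $\alpha$ is oriented: the $\SO(3)$-structure is a trivialisation of $\det\alpha=\Lambda^3\alpha$. Since $\det\alpha\cong\det\delta\otimes\det\lambda\cong\delta\otimes\Lambda^2\lambda$ and $\det\alpha$ is trivial, we obtain $\delta\otimes\Lambda^2\lambda\cong\Rr$; because every orthogonal real line bundle squares to the trivial bundle, this is equivalent to $\delta\cong\Lambda^2\lambda$. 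This establishes the forward direction. For the converse, if $\alpha=\delta\oplus\lambda$ with $\delta=\Lambda^2\lambda$, then $\delta$ is an orthogonal line subbundle of $\alpha$ and the first paragraph produces the desired $\Cd\subset\Hh_\alpha$.

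Finally I would record the structure-group reformulation. The decomposition $\alpha=\delta\oplus\lambda$ with $\delta=\Lambda^2\lambda=\det\lambda$ is exactly a reduction of the $\SO(3)$-structure along the embedding $\O(2)\hookrightarrow\SO(3)$ sending $A$ to the map $\det(A)\oplus A$ on $\Rr\oplus\Rr^2$; this lands in $\SO(3)$ since $\det(\det(A)\oplus A)=\det(A)^2=1$, and its standard $3$-dimensional representation restricts to $\det\oplus\mathrm{std}$, i.e. to $\delta\oplus\lambda$ with $\delta=\det\lambda$.

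I expect the only genuine subtlety to be the orientation bookkeeping in the second paragraph: it is what pins $\delta$ down as $\Lambda^2\lambda$ rather than allowing an arbitrary line subbundle, and it is the reason the reducing subgroup is this specific copy of $\O(2)$ rather than, say, $\SO(2)$ or a split $\O(2)\times\O(1)$.
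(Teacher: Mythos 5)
Your argument is correct, and its skeleton (subalgebra $\Cc_\delta\subset\Hh_\alpha$ corresponds exactly to an orthogonal line subbundle $\delta\subset\alpha$; the orthogonal complement supplies $\lambda$) matches the paper's. Where you diverge is in the one step that has real content, namely pinning down $\delta$ as $\Lambda^2\lambda$. The paper observes that left multiplication by the sub-field $\Cc_\delta$ preserves the perpendicular $2$-plane bundle $\lambda$ (since $u\times v\perp u$ for $u\in\delta$, $v\in\lambda$), so $\lambda$ is a $\Cc_\delta$-line bundle, and then invokes the earlier classification of $2$-plane bundles as $\Cc_{\det\lambda}$-lines to conclude $\delta=\Lambda^2\lambda$. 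You instead ignore the module structure entirely and use the orientation: $\det\alpha\cong\delta\otimes\Lambda^2\lambda$ is trivial, and real line bundles square to the trivial bundle, so $\delta\cong\Lambda^2\lambda$. Your route is more elementary (pure $w_1$ bookkeeping) and makes explicit why the reducing subgroup is the copy of $\O(2)$ embedded as $A\mapsto\det(A)\oplus A$ rather than $\SO(2)$ or $\O(1)\times\O(2)$; the paper's route is terser and feeds directly into the subsequent development, where the fact that $\lambda$ carries a $\Cc_\delta$-module structure (not just that $\det\lambda\cong\delta$) is what is actually used, e.g.\ in defining $\Hh_\lambda=\Hh_{\Lambda^2\lambda\oplus\lambda}$ and in Proposition \ref{bilin}. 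Both are complete proofs; if you wanted to align with the paper you would only need to add the one-line observation that $\Cc_\delta\cdot\lambda\subseteq\lambda$.
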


\begin{proof} On $\Ha$ consider a real inner product in which the
multiplication by a given element from $\Ha$ is a $*$-homomorphism.
If $\Ha$ admits a $\Cd$ as a sub-algebra then the bundle 
$\lambda$ perpendicular to $\Cd$  is a $\Cd$-line
bundle. Hence $\delta=\Lambda^2\lambda$ and
$\alpha=\delta\oplus\lambda$.
\end{proof}

We will denote the bundle of quaternions
$\Hh_{\Lambda^2\lambda\oplus\lambda}$ determined by an $\O (2)$-bundle
$\lambda$ by simply $\Hh_\lambda$. The following propositions give two
necessary and sufficient conditions for a vector bundle to have
$\Hl$-structure.

\begin{prop}\label{bilin}
Fix a $\Cc_\delta$-line bundle $\lambda$.
Let $\xi$ be a $\Cc_\delta$-vector bundle.
Then there is a natural correspondence,
up to homotopy, between $\Hh_\lambda$-structures on $\xi$ and
non-singular skew-symmetric $\Cc_\delta$-bilinear forms
$\xi\otimes_{\Cc_\delta} \xi \to \lambda$.
\end{prop}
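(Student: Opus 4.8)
The plan is to fix, once and for all, a $\Cd$-Hermitian metric $h$ on $\xi$ (constructed by a partition of unity, as in Remark \ref{metric}), with real part $g$, in which $\i\in\delta_x$ of unit length acts as the complex structure $I$. Since $\Hl=\Cd\oplus\lambda$ as a left $\Cd$-module, an $\Hl$-structure extending the given $\Cd$-structure is precisely a bundle map $m\colon\lambda\otimes_\Rr\xi\to\xi$ which is $\Cd$-linear in the $\lambda$-variable and conjugate-linear in the $\xi$-variable (because $\j\i=-\i\j$ in $\Hl$) and which satisfies the relation coming from $\j^2=-|\j|^2$. Concretely, for a local section $j$ of $\lambda$ of unit length the operator $J=m(j,-)$ is conjugate-linear with $J^2=-1$; replacing $j$ by $aj$ with $a\in\Cd$, $|a|=1$, replaces $J$ by $aJ$, so $m$ is determined by this family of local operators.

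From such a structure I define the form by
$$
\omega(u,v)=h(u,m(j,v))\,j\in\lambda .
$$
That $j\mapsto aj$ sends $m(j,-)$ to $a\,m(j,-)$ while $h(u,-)$ is conjugate-linear shows $\omega$ is independent of the chosen unit section $j$, hence globally defined with values in $\lambda$. It is $\Cd$-linear in $u$ because $h$ is, and $\Cd$-linear in $v$ because $J$ is conjugate-linear and $h$ is conjugate-linear in its second slot; thus $\omega$ is $\Cd$-bilinear. When the structure is compatible with $h$ (so $J$ is orthogonal, equivalently $g$-skew-adjoint), a direct check using $JI=-IJ$ together with skew-adjointness of $I$ and $J$ gives $\omega(v,u)=-\omega(u,v)$, and $\omega$ is non-singular since $h$ is non-degenerate and $J$ invertible.

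Conversely, given a non-singular skew $\Cd$-bilinear form $\omega\colon\xi\otimes_{\Cd}\xi\to\lambda$, I write $\omega=\tilde\omega_j\,j$ for a local unit section $j$ and define $J_j$ by $h(u,J_jv)=\tilde\omega_j(u,v)$, using non-degeneracy of $h$. Bilinearity shows $J_j$ is conjugate-linear, and skew-symmetry of $\omega$ shows that $g(u,J_jv)=\operatorname{Re}\tilde\omega_j(u,v)$ is a skew real form, so $J_j$ is $g$-skew-adjoint; hence $P:=-J_j^2=J_j^*J_j$ is positive-definite self-adjoint and commutes with $I$. Replacing $J_j$ by $\tilde J_j=J_jP^{-1/2}$, where $P^{-1/2}$ is the $\Cd$-linear positive square root (commuting with $J_j$), gives $\tilde J_j^2=J_j^2P^{-1}=-1$ and $\tilde J_j^*\tilde J_j=1$; one checks $P$ is independent of $j$ and $\tilde J_{aj}=a\tilde J_j$, so the $\tilde J_j$ patch to a bundle map $m$, that is, a metric-compatible $\Hl$-structure.

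Finally, I will argue that these two assignments are mutually inverse up to homotopy. The space of $\Cd$-Hermitian metrics is convex, and polar decomposition retracts both the space of $\Hl$-structures and the space of non-singular skew forms onto their $h$-compatible members (along the invertible operators $J_jP^{-s/2}$, $s\in[0,1]$); on the compatible level the maps $J\mapsto\omega$ and $\omega\mapsto J$ above are inverse bijections. The main obstacle is precisely this normalization: the form produced from an arbitrary structure need not be skew and the operator produced from an arbitrary form need not square to $-1$, so the statement rests on showing that the polar-decomposition normalization is canonical up to contractible choice and that the twist by the (in general non-trivial) line bundle $\lambda$ makes all the local operators patch globally.
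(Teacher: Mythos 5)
Your argument is correct and is essentially the paper's own proof: both use a compatible $\Cd$-Hermitian inner product to translate between the conjugate-linear multiplication operators and the $\lambda$-valued skew form, and both normalize the converse direction by the polar decomposition $(\psi\psi^*)^{-1/2}\psi$, with the homotopy given by the $t$-th powers of the positive factor. The only cosmetic difference is that the paper works directly with $\varphi_a$ for arbitrary $a\in\lambda_x$, which sidesteps the choice of local unit sections of $\lambda$ and the patching you carry out at the end.
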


\begin{proof} We will carry out all the constructions in fibres at a given
point $x\in X$. Suppose that $\xi$ has an $\Hl$-structure. Then $\xi$ can be 
equipped with a real inner product from Remark \ref{metric} which is the first
component of a $\Cd$-inner product as shown below Definition \ref{Cd}. For every 
$a\in \lambda_x$ the multiplication $\varphi_a(v)=a\cdot v$
defines a $\Cd$-conjugate linear map $\xi\to\xi$ such that 
$\varphi_a^2(v)=-|a|^2v$, where $|a|$ is the norm given by the  real inner 
product on $\Hl$. For the adjoint we get
$\varphi_a^*=-\varphi_a$. In the first place $*$ means the adjoint with
respect to the real inner product, but this translates into the adjoint for
the $\Cd$-inner product. (For a conjugate-linear map $\varphi$, the adjoint
is defined so that $\langle \varphi(u),v\rangle=\langle
\varphi^*(v),u\rangle=\overline{\langle u,\varphi^*(v)\rangle}$.) So we can
define  a skew-symmetric bilinear
form $f:\xi\otimes_{\Cd}\xi\to\lambda$ by 
$$\langle f(u,v), a \rangle=\langle u,\varphi_a(v)\rangle$$
using the $\Cd$-inner product. $f$ is non-singular since $\varphi_a$ is
non-singular for $a\ne 0$.

Conversely, given  $f$, we define a $\Cd$-conjugate linear map 
$\psi_a:\xi_x\to\xi_x$ by 
$\langle u,\psi_a(v)\rangle=\langle f(u,v),a\rangle$. Then
$\psi_a^*=-\psi_a$ and $\psi_a\circ\psi_a^*$ is $\Cd$-linear and positive 
definite for $a\ne 0$. So we can define 
$$\varphi_a=|a|(\psi_a\psi_a^*)^{-1/2}\psi_a$$
with $\varphi_a^2=-|a|^2$ and $\varphi_a^*=-\varphi_a$.

These two constructions define maps from $\Hl$-structures to non-singular
skew-symmetric bilinear forms and back. Composing in one direction we get
the identity on $\Hl$-structures. In the other direction we get a homotopic
form, through the homotopy $(|a|(\psi_a\psi_a^*)^{-1/2})^t$, $0\le t\le
1$.
\end{proof}

\begin{prop}\label{Hl-bundle} Let $\xi$ be a $4n$-dimensional orthogonal 
real vector bundle. Then $\xi$ admits an $\Hh_\lambda$-structure for some 
complex line bundle $\lambda$ if and only if the structure group of $\xi$
can be reduced from $\SO(\Hh^n)$ to $\TSp(\Hh^n)\cap \U (\Hh^n)$,
and an $\Hh_\lambda$-structure for some $\Cc_\delta$-line bundle $\lambda$
and some $\delta$ if and only if the structure group reduces to
$\TSp (\Hh^n)\cap \TU (\Hh^n)$.
\end{prop}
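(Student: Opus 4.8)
The plan is to deduce this from Lemma \ref{TSp} and Proposition \ref{Cd-Ha} by identifying the two intersection groups explicitly. Recall that an $\Hl$-structure on $\xi$ is by definition an $\Ha$-structure with $\alpha=\delta\oplus\lambda$ and $\delta=\Lambda^2\lambda$; by Proposition \ref{Cd-Ha} this is precisely the case in which the structure group of $\alpha$ reduces from $\SO(3)$ to $\O(2)$, equivalently in which $\Ha$ contains a $\Cd$-subbundle. Under the $\Hl$-action the subalgebra $\Cd=\Rr\oplus\delta$ then acts on $\xi$, giving it a compatible $\Cd$-structure; conversely an $\Ha$-structure together with such a $\Cd$-structure inside $\Ha$ recovers the $\Hl$-structure. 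So the proposition amounts to saying that this combined structure corresponds to the intersection of the reductions coming separately from the $\Ha$-structure and the $\Cd$-structure.

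First I would fix the complex structure on the model fibre. Writing $\Hh^n$ as a left $\Hh$-module with $\Sp(n)=\Sp(\Hh^n)$ the $\Hh$-linear isometries, I take $J=L_\i$, left multiplication by $\i$, so that $\U(\Hh^n)$ consists of the isometries commuting with $L_\i$ and $\TU(\Hh^n)$ of those commuting or anticommuting with it. In the description $\TSp(\Hh^n)=\Sp(1)\cdot\Sp(n)$ a typical element has the form $v\mapsto a\,g(v)$ with $a\in\Sp(1)$ and $g\in\Sp(n)$; since $g$ is $\Hh$-linear it commutes with $L_\i$, so the element commutes with $L_\i$ iff $a\i=\i a$, that is $a\in\U(1)=\Sp(1)\cap\Cc$, and anticommutes iff $a\i=-\i a$, that is $a\in\j\,\U(1)$. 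Hence
$$
\TSp(\Hh^n)\cap\U(\Hh^n)=\U(1)\cdot\Sp(n),\qquad
\TSp(\Hh^n)\cap\TU(\Hh^n)=(\U(1)\cup\j\,\U(1))\cdot\Sp(n).
$$

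The key step is to match these groups with the reductions of $\alpha$. Under the adjoint double cover $\rho:\Sp(1)\to\SO(3)=\SO(\im\Hh)$, the circle $\U(1)$ maps onto the subgroup $\SO(2)$ fixing $\i$, while $\U(1)\cup\j\,\U(1)$, the normalizer of $\U(1)$ in $\Sp(1)$, maps onto the stabilizer $\O(2)$ of the line $\Rr\i$. Since $\alpha=P\times_{\TSp(\Hh^n)}\im\Hh$ with $\TSp(\Hh^n)$ acting through $a\mapsto\rho(a)$, a reduction of the structure group of $\xi$ to $\TSp(\Hh^n)\cap\TU(\Hh^n)$ is exactly a reduction of $\alpha$ to $\O(2)$, i.e. a splitting $\alpha=\delta\oplus\lambda$ with $\delta$ the line through $\i$; a reduction to $\TSp(\Hh^n)\cap\U(\Hh^n)$ pins this line down pointwise, forcing $\delta=\Lambda^2\lambda$ to be trivial and $\lambda$ an honest complex line bundle.

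With this dictionary the two implications follow the pattern of Lemma \ref{TSp}. Given a reduction to the intersection group, the construction of Lemma \ref{TSp} supplies the $\Ha$-structure on $\xi$ with $\alpha=\delta\oplus\lambda$, hence an $\Hl$-structure. Conversely, an $\Hl$-structure gives an $\Ha$-structure with $\alpha=\delta\oplus\lambda$, and the frame bundle $\operatorname{Fr}(\xi)$ of Lemma \ref{TSp} reduces further to those frames respecting the chosen line $\delta\subset\alpha$ (equivalently the complex structure on $\xi$ given by a unit section of $\delta$, which on the model fibre is $L_\i$), namely to $\TSp(\Hh^n)\cap\TU(\Hh^n)$, and to $\TSp(\Hh^n)\cap\U(\Hh^n)$ in the untwisted case. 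The hard part is precisely the group-theoretic identification of the previous paragraph, together with keeping the twisted case ($\TU$, $\delta$ possibly non-trivial) distinct from the untwisted case ($\U$, $\delta$ trivial); once the subgroups $\U(1)\cdot\Sp(n)$ and $(\U(1)\cup\j\,\U(1))\cdot\Sp(n)$ are recognized as the stabilizers of the line $\Rr\i\subset\im\Hh$, the rest is the routine frame-bundle translation.
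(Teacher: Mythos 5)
Your argument is correct and follows essentially the same route as the paper, whose proof is the one-line instruction to combine Lemma \ref{TSp} with its $\Cc_\delta$-analogue: you simply make explicit the identification $\TSp(\Hh^n)\cap\TU(\Hh^n)=\TU(1)\cdot\Sp(n)$ (which is exactly the paper's Lemma \ref{groups}, stated immediately afterwards) and the fact that $\rho(\TU(1))$ is the stabilizer $\O(2)$ of the line $\Rr\i\subset\im\Hh$, matching Proposition \ref{Cd-Ha}. The frame-bundle translation you give at the end is the intended content of ``use Lemma \ref{TSp}'', so nothing is missing.
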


\begin{proof} Use Lemma \ref{TSp} and the analogous statement for 
$\Cd$-structures.
\end{proof}

It is advantageous to express the intersections of the groups 
as quotients of products. 

\begin{lem} \label{groups} For any $4n$-dimensional real vector space $V$ 
with a left $\Hh$-module structure
\begin{align*}
\TSp(V)\cap \U(V)&=\U(1)\cdot \Sp(V)=\left( \U(1)\times \Sp(V))\right) /(\Zz/2),\\
\TSp(V)\cap \TU(V)&=\TU(1)\cdot \Sp(V)=\left( \TU(1)\times \Sp(V))\right)
/(\Zz/2).
\end{align*}
\end{lem}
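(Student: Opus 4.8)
The plan is to fix the complex structure underlying $\U(V)$ and $\TU(V)$ to be left multiplication by $\i$, and then to run the factorization $\TSp(V)=\Sp(1)\cdot\Sp(V)$ recalled above through this complex structure. Recall that every element of $\TSp(V)$ has the form $g\colon v\mapsto a\,h(v)$ with $a\in\Sp(1)\subseteq\Hh$ and $h\in\Sp(V)$ an $\Hh$-linear isometry; I would take this as the starting point.

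First I would write $J$ for the map $v\mapsto\i v$, so that by definition $\U(V)$ consists of the orthogonal maps commuting with $J$ and $\TU(V)$ of those commuting or anticommuting with $J$. The key observation is that $h$, being $\Hh$-linear, commutes with $J$; hence for $g\colon v\mapsto a\,h(v)$ one has $(g\circ J)(v)=a\i\,h(v)$ and $(J\circ g)(v)=\i a\,h(v)$. As $h$ is invertible, $g$ commutes (resp.\ anticommutes) with $J$ precisely when $a\i=\i a$ (resp.\ $a\i=-\i a$); thus the complex-linearity type of $g$ is controlled entirely by the scalar $a$.

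Next I would identify the relevant subgroups of $\Sp(1)$. The unit quaternions commuting with $\i$ are exactly the unit complex numbers $\U(1)=\Sp(1)\cap\Cc$, and a short computation shows that those anticommuting with $\i$ form the unit circle $\j\,\U(1)$ in $\Rr\j\oplus\Rr\k$; together they give the subgroup $\TU(1)=\U(1)\cup\j\,\U(1)$, the normalizer of $\U(1)$ in $\Sp(1)$. Combining with the previous paragraph gives the set equalities $\TSp(V)\cap\U(V)=\U(1)\cdot\Sp(V)$ and $\TSp(V)\cap\TU(V)=\TU(1)\cdot\Sp(V)$: any element of a left-hand side lies in $\TSp(V)$, hence factors as $a\,h$, and the $\U$- (resp.\ $\TU$-) condition is equivalent to $a\in\U(1)$ (resp.\ $a\in\TU(1)$), while conversely every such product manifestly lies in both groups. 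Since each left-hand side is an intersection of subgroups, these are subgroup equalities.

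Finally I would compute the kernel of the multiplication homomorphism $\U(1)\times\Sp(V)\to\O(V)$, $(a,h)\mapsto(v\mapsto a\,h(v))$, and likewise with $\TU(1)$ in place of $\U(1)$. A pair maps to the identity iff $h(v)=\bar a v$ for all $v$, which forces $\bar a$, and hence $a$, to be central in $\Hh$, so $a=\pm1$; this gives kernel $\{(1,\mathrm{id}),(-1,-\mathrm{id})\}\cong\Zz/2$ in both cases, yielding the displayed quotient descriptions. I expect the only real subtlety to be the bookkeeping: correctly pinning down $\TU(1)$ as the unit quaternions that commute or anticommute with $\i$ (note that elements of its non-identity component square to $-1$, so that $\TU(1)$ is $\mathrm{Pin}(2)$ rather than $\O(2)$), and verifying that the same central $\Zz/2$ serves both cases; the quaternion identities themselves are routine.
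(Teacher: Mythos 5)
Your argument is correct and is essentially the paper's proof: both factor an element of $\TSp(V)$ as $v\mapsto a\,g(v)$ with $a\in\Sp(1)$ and $g\in\Sp(V)$ complex-linear, so that membership in $\U(V)$ (resp.\ $\TU(V)$) reduces to whether $a$ commutes (resp.\ commutes or anticommutes) with $\i$, giving $a\in\U(1)$ (resp.\ $a\in\U(1)\cup\j\,\U(1)=\TU(1)$). The only addition is your explicit computation of the central $\Zz/2$ kernel of the multiplication map, which the paper leaves implicit.
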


\begin{proof} Let us prove only the second formula. Consider an element of
$\TSp(V)$ given by an isomorphism $v\mapsto ag(v)$ where $a\in \Sp(1)$ and
$g\in\Sp(V)$. This element lies in $\TU(V)$ if and only if it is a $\Cc$-linear 
or  conjugate linear isometry. Since $g$ is $\Cc$-linear, this means that for all 
$z\in\Cc$ either $az=za$ or $az=\bar{z}a$. Hence $a\in \U(1)\cup \j
\U(1)=\TU(1)\subset \Sp(1)$.
\end{proof}

Now we return to the construction from the beginning of Section 2.
A $2$-dimensional complex bundle $\mu$ or, more generally,
a $2$-dimensional $\Cc_\delta$-vector bundle has a natural orientation
as a $4$-dimensional real vector bundle.
The double cover $\SO (\Hh ) \xrightarrow{(\rho_+,\,\rho_-)} \SO (3)\times \SO (3)$
considered in Proposition \ref{Ha_Hb} restricts to  maps
$$
\U (2) =\U (\Hh ) \to \SO (3)\times \SO (3),\quad 
\TU (2)=\TU(\Hh)\to\SO(3)\times\SO(3).
$$
Since $\TSp(\Hh)=\SO(\Hh)$, we can apply Lemma \ref{groups} to get
$$\U(2)\cong \U(1)\cdot \Sp(1)=\Spin^c(3), \quad
\TU(2)\cong\TU(1)\cdot\Sp(1).$$

\begin{lem}\label{TU2}
Let $\mu$ be a $2$-dimensional $\Cd$-vector bundle.
Then $\mu$ is an $\Hh_\lambda$-line bundle, where $\lambda =
\Lambda^2_{\Cc_{\delta}} \mu$,
and also an $\End_{\Hl} (\mu )$-line bundle.
\end{lem}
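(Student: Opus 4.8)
The plan is to obtain the $\Hh_\lambda$-structure from a bilinear form by appealing to Proposition \ref{bilin}, and then to read off the endomorphism algebra directly from Proposition \ref{Ha_Hb}. Throughout, $\mu$ has $\Cc_\delta$-rank $2$, hence real dimension $4$, and $\lambda=\Lambda^2_{\Cc_\delta}\mu$ is a $\Cc_\delta$-line bundle (its real determinant being $\delta$, as for any $\Cc_\delta$-line bundle, so that $\Hh_\lambda$ is defined). Using Remark \ref{metric} I would first fix a compatible real inner product, so that all the structures below may be taken orthogonal.

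First I would write down the canonical alternating form. The fibrewise wedge product
$$
f:\mu\otimes_{\Cc_\delta}\mu\to\Lambda^2_{\Cc_\delta}\mu=\lambda,\qquad
f(u,v)=u\wedge v,
$$
is skew-symmetric and $\Cc_\delta$-bilinear. Because $\mu$ has $\Cc_\delta$-rank $2$, this form is non-singular: if $u\neq0$ in a fibre, extend $u$ to a $\Cc_\delta$-basis $u,w$; then $f(u,w)=u\wedge w$ generates the line $\lambda$, so the adjoint $\mu\to\Hom_{\Cc_\delta}(\mu,\lambda)$, $u\mapsto f(u,-)$, is injective, and since both sides have $\Cc_\delta$-rank $2$ it is an isomorphism. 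Proposition \ref{bilin} then converts $f$ into an $\Hh_\lambda$-structure on $\mu$; as $\dim_\Rr\mu=4=\dim_\Rr\Hh_\lambda$, this realises $\mu$ as an $\Hh_\lambda$-line bundle with $\lambda=\Lambda^2_{\Cc_\delta}\mu$, as claimed.

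For the second assertion I would apply Proposition \ref{Ha_Hb} with $\alpha=\lambda$. Having just presented $\mu$ as an orthogonal $\Hh_\lambda$-line bundle, that proposition identifies the commutant $\End_{\Hl}(\mu)$ with $\Hh_\beta^\op$, where $\beta=\rho_-(\mu)$; in particular $\End_{\Hl}(\mu)$ is itself a bundle of quaternion algebras, and $\mu$ is a right $\Hh_\beta$-line, equivalently a left $\Hh_\beta^\op=\End_{\Hl}(\mu)$-module. Since $\dim_\Rr\End_{\Hl}(\mu)=4=\dim_\Rr\mu$, this module is free of rank one, so $\mu$ is an $\End_{\Hl}(\mu)$-line bundle.

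The only step requiring genuine care is the non-singularity of $f$, and this is exactly where the hypothesis that $\mu$ has $\Cc_\delta$-rank precisely $2$ enters: for higher rank the wedge map into $\Lambda^2$ is degenerate and Proposition \ref{bilin} cannot be invoked. Everything else is bookkeeping with correspondences already established. Alternatively one could argue through structure groups, reducing $\mu$ to $\TU(2)\cong\TU(1)\cdot\Sp(1)$ and matching the $\TU(1)$-factor to $\rho_+$ and the $\Sp(1)$-factor to $\rho_-$; this recovers the same $\lambda$ and the same commutant, but the bilinear-form route is shorter.
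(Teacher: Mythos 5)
Your proof is correct and follows essentially the same route as the paper: the paper's own (much terser) argument likewise invokes Proposition \ref{bilin} to obtain the $\Hh_\lambda$-structure and Proposition \ref{Ha_Hb} to identify $\End_{\Hl}(\mu)$ with $\Hh_{\rho_-(\mu)}^\op$. You merely make explicit the non-singular skew form (the wedge pairing into $\Lambda^2_{\Cc_\delta}\mu$) that the paper leaves implicit, which is a useful clarification but not a different method.
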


\begin{proof}
It follows from Proposition \ref{bilin} that $\mu$ has
an $\Hl$-structure. Then 
$\Hh_{\rho_-(\mu)}^\op=\End_{\Hl}(\mu)$ by Proposition \ref{Ha_Hb}
and $\rho_-(\mu)$ is the bundle of skew-Hermitian endomorphisms 
of $\mu$.
\end{proof}

The homomorphism $\rho_-$ restricted to $\TU(2)\cong\TU(1)\cdot \Sp(1)$ is 
the projection onto $\SO(3)$. We prove that the structure group 
$\SO(3)$ of an oriented $3$-dimensional real vector bundle $\beta$ 
can be lifted to $\TU(2)$ if and only if there are a real line bundle 
$\delta$ and an element $l\in H^2(X;\Zz(\delta))$ such that 
$w_2(\beta)=\rho_2l+w_1^2(\delta)$.  

The condition is necessary. According to Lemma \ref{TU2} for 
a $2$-dimensional $\Cd$-vector bundle $\mu$ with
$\rho_-(\mu)=\beta$ we have $\rho_+(\mu)=\delta\oplus\lambda$ and
Proposition \ref{Ha_Hb} implies that
$w_2(\beta)=w_2(\rho_+(\mu))=\rho_2 e(\lambda)+w_1^2(\delta)$.

To show that the condition is sufficient take the $\Cd$-line bundle $\lambda$ 
with the Euler class $l$. Then by Proposition \ref{w2} the quaternion
bundles $\Hl$ and $\Hb$ are Morita equivalent and the vector bundle $\mu$ 
from the definition of the equivalence has the structure group $\TU(2)$. 
So we obtain

\begin{lem} \label{TSpinc}
Let $\beta$ be an oriented $3$-dimensional vector bundle with
$w_2(\beta)=\rho_2(l)+w_1^2(\delta)$ for an element $l\in H^2(X;\Zz(\delta))$ and a real
line bundle $\delta$. Then there is a $2$-dimensional $\Cd$-vector bundle 
$\mu$ such
that
$\operatorname{det}_{\Cd}\mu=\lambda$, where $e(\lambda)=l$, and
$$\Hb=\End_{\Hl}(\mu)^\op.$$
\end{lem}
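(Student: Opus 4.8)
The plan is to realise the bundle $\mu$ through a Morita equivalence, exploiting that $\Hl$ already contains $\Cd$ as a subalgebra. First I would produce the line bundle $\lambda$: since $\Cd$-line bundles over $X$ are classified by their twisted Euler class in $H^2(X;\Zz(\delta))$ (see \cite{CSS}), there is a $\Cd$-line bundle $\lambda$ with $e(\lambda)=l$, and it has $\det\lambda=\delta$, matching the given $\delta$. I then form the quaternion bundle $\Hl=\Hh_{\delta\oplus\lambda}$, whose associated oriented $3$-dimensional bundle is $\alpha=\delta\oplus\lambda$ by Proposition \ref{Cd-Ha}.

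The next step is to match second Stiefel--Whitney classes. Using $w_1(\delta)=w_1(\lambda)$ and $w_2(\lambda)=\rho_2 e(\lambda)=\rho_2(l)$, the Whitney sum formula gives
\[
w_2(\alpha)=w_2(\delta\oplus\lambda)=w_1(\delta)w_1(\lambda)+w_2(\lambda)=w_1^2(\delta)+\rho_2(l)=w_2(\beta).
\]
Hence, by Theorem \ref{w2}, the bundles $\Hl$ and $\Hb$ are Morita equivalent, so there is a $4$-dimensional real vector bundle $\mu$ with an isomorphism $\Hl\otimes\Hb^\op\cong\End_\Rr(\mu)$. By Proposition \ref{Ha_Hb} this equips $\mu$ with the structure of an orthogonal $\Hl$-line bundle (with $\rho_+(\mu)=\alpha$, $\rho_-(\mu)=\beta$) and yields $\End_{\Hl}(\mu)=\Hb^\op$, which is exactly the asserted identity $\Hb=\End_{\Hl}(\mu)^\op$.

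It then remains to see that $\mu$ is a rank-$2$ $\Cd$-bundle with $\det_{\Cd}\mu=\lambda$. Because $\Cd=\Rr\oplus\delta$ sits inside $\Hl=\Rr\oplus(\delta\oplus\lambda)$ as a subalgebra, restricting the $\Hl$-module structure on $\mu$ along this inclusion gives a $\Cd$-module structure; as $\mu$ is $4$-real-dimensional this makes it a $2$-dimensional $\Cd$-bundle (structure group $\TU(2)$). Finally, under Proposition \ref{bilin} the $\Hl$-structure on $\mu$ corresponds to a non-singular skew-symmetric $\Cd$-bilinear form $\mu\otimes_{\Cd}\mu\to\lambda$, and such a form on a rank-$2$ $\Cd$-bundle induces an isomorphism $\det_{\Cd}\mu=\Lambda^2_{\Cd}\mu\xrightarrow{\cong}\lambda$; this is precisely the content of Lemma \ref{TU2}. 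In particular $e(\det_{\Cd}\mu)=e(\lambda)=l$.

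The step I expect to be the main obstacle is the last one: confirming that the determinant line $\det_{\Cd}\mu$ produced by the Morita construction is the \emph{specific} $\lambda$ we chose, rather than merely some $\Cd$-line bundle with the correct mod-$2$ reduction. This is what pins the Euler class down to be exactly $l$, and it must be traced through the correspondence of Proposition \ref{bilin} (equivalently Lemma \ref{TU2}) between the $\Hl$-structure used in the Morita equivalence and the skew form whose target is $\lambda$. Verifying the compatibility of the canonical orientations and of the two appearances of $\lambda$ is the one place where genuine care is needed; the reduction of the structure group to $\TU(2)$, by contrast, is automatic from the inclusion $\Cd\subseteq\Hl$.
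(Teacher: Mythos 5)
Your argument is essentially the paper's own: choose the $\Cd$-line bundle $\lambda$ with $e(\lambda)=l$, check $w_2(\delta\oplus\lambda)=\rho_2(l)+w_1^2(\delta)=w_2(\beta)$, invoke Theorem \ref{w2} to get the Morita-equivalence bundle $\mu$, and observe via $\Cd\subseteq\Hl$ that $\mu$ is a rank-$2$ $\Cd$-bundle with $\Lambda^2_{\Cd}\mu=\lambda$. The extra care you take in pinning down $\det_{\Cd}\mu$ as the specific $\lambda$ (through Lemma \ref{TU2}) is a correct elaboration of a point the paper leaves implicit, not a different approach.
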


For $\delta$ trivial, if $\beta$ is a $3$-dimensional $\Spinc$ real vector
bundle,
then $\Hb$ is Morita equivalent to an $\Hl$  for some complex line
bundle $\lambda$.

Applying Proposition \ref{Morita_eq} we obtain: 

\begin{thm}\label{Hl_Hb} 
Let $\eta$ be a $4n$-dimensional real vector bundle. Then $\eta$ admits
an $\Hb$-structure with $w_2(\beta)=\rho_2(l)+w_1^2(\delta)$ for an 
$l\in H^2(X;\Zz(\delta))$ and a real line
bundle $\delta$ if and only if for the $\Cd$-line bundle $\lambda$ with
$e(\lambda)=l$ there exists an $\Hl$-line bundle $\mu$ and an 
$n$-dimensional $\Hl$-vector bundle $\xi$ such that
$$\eta\cong \mu^*\otimes_{\Hl}\xi=\Hom_{\Hl}(\mu,\xi).$$
The twisted quaternionic $\Hb$-structure is given by the action of the
bundle $\End_{\Hl}(\mu)^\op$.
\end{thm}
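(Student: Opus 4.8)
The plan is to read off both implications from the Morita equivalence supplied by Lemma \ref{TSpinc} together with Proposition \ref{Morita_eq}, so that almost all of the work has already been done.

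For the forward implication, suppose $\eta$ carries an $\Hb$-structure with $w_2(\beta)=\rho_2(l)+w_1^2(\delta)$. First I would apply Lemma \ref{TSpinc} to produce a $2$-dimensional $\Cd$-bundle $\mu$ with $\operatorname{det}_{\Cd}\mu=\lambda$, $e(\lambda)=l$ and $\Hb=\End_{\Hl}(\mu)^\op$. By Lemma \ref{TU2}, $\mu$ is simultaneously an $\Hl$-line bundle, so Proposition \ref{Ha_Hb} lets me rewrite this identity as a Morita equivalence $\Hl\otimes_\Rr\Hb^\op\cong\End_\Rr(\mu)$. Feeding it into Proposition \ref{Morita_eq} gives the functor $\eta\mapsto\mu\otimes_{\Hb}\eta$ from $\Hb$-bundles to $\Hl$-bundles; I set $\xi=\mu\otimes_{\Hb}\eta$, and a dimension count (using that $\mu$ is one-dimensional over each of $\Hl$ and $\Hb$) shows $\xi$ is an $n$-dimensional $\Hl$-bundle. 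Finally I recover $\eta$ by the inverse functor, which the groupoid structure of $\mathcal{M}(X)$ recorded in Definition \ref{Morita} identifies as $\xi\mapsto\mu^*\otimes_{\Hl}\xi$; the bimodule identification $\mu^*\otimes_{\Hl}\xi=\Hom_{\Hl}(\mu,\xi)$ then yields $\eta\cong\Hom_{\Hl}(\mu,\xi)$.

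For the converse, suppose $\lambda$, $\mu$ and $\xi$ are given with $\eta\cong\Hom_{\Hl}(\mu,\xi)$. Here $\Hom_{\Hl}(\mu,\xi)$ carries a left action of $\End_{\Hl}(\mu)^\op$ by precomposition, and since $\mu$ is an $\Hl$-line bundle Proposition \ref{Ha_Hb} identifies $\End_{\Hl}(\mu)^\op$ with $\Hh_{\rho_-(\mu)}$; putting $\beta=\rho_-(\mu)$ equips $\eta$ with the asserted $\Hb$-structure. It then remains only to verify the Stiefel--Whitney constraint, which is precisely the necessity computation preceding Lemma \ref{TSpinc}: from $\rho_+(\mu)=\delta\oplus\lambda$ (Lemma \ref{TU2} and Proposition \ref{Cd-Ha}) and Proposition \ref{Ha_Hb} one obtains $w_2(\beta)=w_2(\rho_+(\mu))=\rho_2e(\lambda)+w_1^2(\delta)=\rho_2(l)+w_1^2(\delta)$.

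I expect the main obstacle to be bookkeeping rather than anything deep: one must be scrupulous with the left/right and opposite-algebra conventions, so that $\mu$ is treated as an $(\Hl,\Hb)$-bimodule with the tensor products $\mu\otimes_{\Hb}\eta$ and $\mu^*\otimes_{\Hl}\xi$ formed over the correct side, and so that $\mu^*\otimes_{\Hl}\xi=\Hom_{\Hl}(\mu,\xi)$ is genuinely the inverse of $\mu\otimes_{\Hb}(-)$ under the Morita equivalence. Once these conventions are pinned down, both directions fall out immediately from Lemma \ref{TSpinc}, Lemma \ref{TU2} and Proposition \ref{Morita_eq}.
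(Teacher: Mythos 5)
Your proposal is correct and follows essentially the same route as the paper, which simply states that the theorem follows by applying Proposition \ref{Morita_eq} to the Morita equivalence $\Hb=\End_{\Hl}(\mu)^\op$ furnished by Lemma \ref{TSpinc}; you have merely spelled out the details (the identification of the inverse functor with $\Hom_{\Hl}(\mu,-)$ and the Stiefel--Whitney computation already carried out before Lemma \ref{TSpinc}) that the paper leaves implicit.
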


The following statement is a complement to Proposition \ref{Ha_Hb}.

\begin{lem}\label{split}
Suppose that the oriented orthogonal $4$-dimensional bundle $\mu$
is an orthogonal direct sum $\mu_0\oplus\mu_1$ of two $2$-dimensional
subbundles. Write $\delta =\det \mu_0=\det \mu_1$, so that $\mu_0$ and
$\mu_1$ become $\Cd$-bundles. Then
$\rho_+(\mu)= \delta\oplus (\mu_0\otimes_{\Cc_\delta} \mu_1)$ and
$\rho_-(\mu) = \delta\oplus (\mu_0\otimes_{\Cc_\delta} \bar{\mu_1})$, where the
conjugate is given by conjugation on $\Cc_\delta$.
\end{lem}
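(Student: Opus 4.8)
The plan is to recognise $\mu=\mu_0\oplus\mu_1$ as a $2$-dimensional $\Cc_\delta$-vector bundle and then to read off $\rho_+(\mu)$ and $\rho_-(\mu)$ from Lemma \ref{TU2} and Corollary \ref{2cor}, rather than computing with $\Lambda^2\mu$ directly. First I would record that, since $\mu$ is oriented, $\det\mu_0\otimes\det\mu_1=\det\mu$ is trivial, so $\det\mu_0\cong\det\mu_1=:\delta$; this makes $\mu_0$ and $\mu_1$ canonical $\Cc_\delta$-lines and hence $\mu$ a $2$-dimensional $\Cc_\delta$-bundle. I would then check the orientation bookkeeping: the natural orientation of $\mu$ as a $\Cc_\delta$-bundle, read off from a frame $e,\i e$ in each line for a unit $\i\in\delta$, is independent of the choice of $\i$ and agrees with the given direct-sum orientation of $\mu_0\oplus\mu_1$.

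By Lemma \ref{TU2}, $\mu$ is then an $\Hl$-line bundle with $\lambda=\Lambda^2_{\Cc_\delta}\mu$. For the rank-two bundle $\mu_0\oplus\mu_1$ one has $\Lambda^2_{\Cc_\delta}\mu=\mu_0\otimes_{\Cc_\delta}\mu_1$, and by the convention $\Hl=\Hh_{\delta\oplus\lambda}$ the underlying oriented $3$-dimensional bundle of $\Hl$ is $\delta\oplus(\mu_0\otimes_{\Cc_\delta}\mu_1)$. Since $\mu$ is an orthogonal $\Hl$-line, Proposition \ref{Ha_Hb} identifies $\rho_+(\mu)$ with this bundle, which is the first claimed formula.

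For the second formula I would pass to the conjugate. Writing $\bar{\mu_1}$ for $\mu_1$ equipped with the conjugate $\Cc_\delta$-structure, its orientation as a real $2$-plane is reversed, so $\mu_0\oplus\bar{\mu_1}$ is $\mu$ with the opposite orientation, i.e. $-\mu$ in the notation of Corollary \ref{2cor}. That corollary gives $\rho_-(\mu)=\rho_+(-\mu)=\rho_+(\mu_0\oplus\bar{\mu_1})$, and since $\det\bar{\mu_1}=\delta$ as well, the previous paragraph applied to $\mu_0\oplus\bar{\mu_1}$ yields $\rho_-(\mu)=\delta\oplus(\mu_0\otimes_{\Cc_\delta}\bar{\mu_1})$.

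The main obstacle is the orientation bookkeeping, precisely because $\rho_+$ and $\rho_-$ are interchanged by reversing the orientation of $\mu$: I must be sure that the given orientation is the natural $\Cc_\delta$-orientation and that replacing $\mu_1$ by $\bar{\mu_1}$ reverses it. To pin down the signs independently I would cross-check via Lemma \ref{lambda2}, decomposing $\Lambda^2\mu=\Lambda^2\mu_0\oplus(\mu_0\otimes_\Rr\mu_1)\oplus\Lambda^2\mu_1=\delta\oplus(\mu_0\otimes_\Rr\mu_1)\oplus\delta$. In an oriented orthonormal frame the Hodge star interchanges the two $\delta$-summands and acts on the middle summand as $-(J_0\otimes J_1)$, where $J_i$ is the $\Cc_\delta$-structure and $J_0\otimes J_1$ is the globally defined involution obtained using $\delta\otimes\delta=\Rr$. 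The standard real splitting $A\otimes_\Rr B=(A\otimes_{\Cc_\delta}B)\oplus(A\otimes_{\Cc_\delta}\bar B)$ identifies the $(-1)$- and $(+1)$-eigenspaces of $J_0\otimes J_1$ with $\mu_0\otimes_{\Cc_\delta}\mu_1$ and $\mu_0\otimes_{\Cc_\delta}\bar{\mu_1}$; since $*=-(J_0\otimes J_1)$ on this summand, the Hodge $+1$- and $-1$-eigenspaces reproduce exactly the two formulas.
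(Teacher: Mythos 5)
Your argument is correct and follows essentially the same route as the paper: both recognise $\mu=\mu_0\oplus\mu_1$ as a $2$-dimensional $\Cc_\delta$-bundle, apply Lemma \ref{TU2} to get $\rho_+(\mu)=\delta\oplus\Lambda^2_{\Cc_\delta}\mu=\delta\oplus(\mu_0\otimes_{\Cc_\delta}\mu_1)$, and deduce the formula for $\rho_-$ from the observation that $\mu_0\oplus\bar{\mu_1}$ carries the opposite orientation, so that $\rho_-(\mu)=\rho_+(\mu_0\oplus\bar{\mu_1})$. Your concluding Hodge-star cross-check via Lemma \ref{lambda2} is an extra verification the paper does not carry out, but the core argument is the same.
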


\begin{proof}
The vector bundle $\mu=\mu_0\oplus\mu_1$ has the structure
group $O(2)\cdot O(2)=TU(1)\cdot TU(1)\subset TU(2)$. So it follows from 
Lemma \ref{TU2} that $\rho_+(\mu)$ is given by the projection 
onto the first factor $TU(1)$ and  is equal to $\delta\oplus\Lambda^2_{\Cd}(\mu_0\oplus
\mu_1)=\delta\oplus(\mu_0\otimes_{\Cd}\mu_1)$.

Since the orientation of $\mu_0\oplus\overline{\mu_1}$ is opposite to that of $\mu$,
$$\rho_-(\mu)=\rho_+(\mu_0\oplus\overline{\mu_1})=\delta\oplus(\mu_0
\otimes_{\Cd}\overline{\mu_1}).$$
\end{proof}

In the case that $\lambda$ is a $\Cd$-line bundle,
and $\alpha =\delta\oplus\lambda$ we can express the Chern classes $\cd_i$
in terms of the  classes $d_j^\alpha$.

\begin{thm}\label{Chern} 
Let $\alpha=\delta\oplus\lambda$, where $\lambda$ is a
$\Cd$-line bundle. Let $\xi$ be an $\Ha$-bundle of dimension $n$. Then
$$1+c_1^{\delta}(\xi)+\dots+\cd_{2n}(\xi)=
\sum_{i=0}^n(1+\cd_1(\lambda))^{n-i}\da_i(\xi).$$
In particular, $\cd_1(\xi)=n\cd_1(\lambda)$.
\end{thm}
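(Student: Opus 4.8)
The plan is to reduce the statement to the case of $\Hh_\lambda$-line bundles, where $\alpha=\delta\oplus\lambda$ with $\lambda$ a $\Cc_\delta$-line bundle, and then invoke the splitting principle (Proposition \ref{splitting}) together with multiplicativity. First I would set up the relationship between the $\Ha$-structure and the $\Cd$-structure on $\xi$: since $\alpha=\delta\oplus\lambda$ contains $\delta$ as a summand, Proposition \ref{Cd-Ha} tells us that $\Ha$ contains $\Cc_\delta$ as a subalgebra, so every $\Ha$-bundle $\xi$ is automatically a $\Cc_\delta$-bundle by restriction of scalars. This means both the classes $\da_i(\xi)$ and the twisted Chern classes $\cd_j(\xi)$ are defined on the same underlying bundle, and I need to relate the two projective bundles $\Ha P(\xi)$ and $\Cc_\delta P(\xi)$.

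The key geometric observation is that there is a natural fibration $\pi\colon \Cc_\delta P(\xi)\to\Ha P(\xi)$ realizing each $\Ha$-line inside $\xi$ as a $\Cc_\delta P(\Hh_{\alpha_x})$-worth of $\Cc_\delta$-lines. Concretely, the Hopf $\Ha$-line bundle $H$ pulls back to $\Cc_\delta P(\xi)$ as (a twist of) the $\Cc_\delta$-Hopf line bundle $H_{\Cc_\delta}$. I would compute how the Euler class $t=e(H)\in H^4$ pulls back in terms of $c=\cd_1(H_{\Cc_\delta})\in H^2$ using the fact that $H$ as a $\Cc_\delta$-bundle has the form treated in Lemma \ref{TU2} and the relation $\rho_+(\mu)=\delta\oplus\lambda$ for a $\Cc_\delta$-line $\mu$. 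The upshot should be a relation of the shape $\pi^*t=c^2+c\,\cd_1(\lambda)+(\text{terms})$, the $4$-dimensional analogue of the Stiefel-Whitney computation $\pi^*(t)=x^4+x^2w_2(\alpha)+xw_3(\alpha)$ appearing in the proof of Theorem \ref{properties}(c). Matching the defining relation of $H^*(\Ha P(\xi))$ against that of $H^*(\Cc_\delta P(\xi))$ under this substitution then yields the stated formula.

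Alternatively, and more cleanly, I would exploit the splitting principle directly. By Proposition \ref{splitting} I may pull back to $F(\xi)$, where $\xi$ splits as a sum of $\Ha$-lines $\mu_1\oplus\dots\oplus\mu_n$, and since $p^*$ is injective on cohomology it suffices to verify the identity there. By multiplicativity of both $\da$ and $\cd$ (the latter following exactly as in Section 4, since $\cd$ is defined just as $\da$ is), the problem reduces to the single-line case: I must show, for an $\Ha$-line bundle $\mu$, that $1+\cd_1(\mu)+\cd_2(\mu)=(1+\cd_1(\lambda))+\da_1(\mu)$, i.e.\ that $\cd_1(\mu)=\cd_1(\lambda)$ and $\cd_2(\mu)=\cd_1(\lambda)\da_1(\mu)+\da_1(\mu)=e(\mu)+\cd_1(\lambda)e(\mu)$ after identifying $\da_1(\mu)=e(\mu)$ via Theorem \ref{properties}(b). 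The relation $e(\xi)=\cd_{2n}(\xi)$ from the $\Cd$-theory, combined with $e(\xi)=\da_n(\xi)$ from Theorem \ref{properties}(b), provides a useful consistency check at the top degree.

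The main obstacle I expect is the line-bundle base case: pinning down exactly how an $\Ha$-line bundle $\mu$ decomposes as a rank-$2$ $\Cc_\delta$-bundle and computing its two twisted Chern classes. This requires understanding $\mu$ as a $\Cc_\delta$-module via the inclusion $\Cc_\delta\hookrightarrow\Ha$, identifying the $\Cc_\delta$-structure explicitly (here Lemma \ref{TU2} and Lemma \ref{split} are the relevant tools, giving $\Lambda^2_{\Cc_\delta}\mu$ and hence $\cd_1(\mu)$ in terms of $\lambda$), and then computing $\cd_2(\mu)=e_{\Cc_\delta}(\mu)$, which should agree with $e(\mu)=\da_1(\mu)$ up to the twist by $\lambda$. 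Once this single relation $\sum_j\cd_j(\mu)=(1+\cd_1(\lambda))+\da_1(\mu)$ is established, the general formula follows formally by taking the product over the $n$ lines and collecting terms, and the special case $\cd_1(\xi)=n\cd_1(\lambda)$ drops out by reading off the degree-$2$ part (noting $\da_i$ lives in degree $4i$, so contributes nothing in degree $2$).
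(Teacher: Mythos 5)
Your second (``cleaner'') route is exactly the paper's: reduce by the splitting principle and multiplicativity to the case of an $\Hl$-line bundle $\mu$ and verify the identity $1+\cd_1(\mu)+\cd_2(\mu)=1+\cd_1(\lambda)+\da_1(\mu)$ there. But your statement of that base case contains a degree error. You write $\cd_2(\mu)=\cd_1(\lambda)\da_1(\mu)+\da_1(\mu)=e(\mu)+\cd_1(\lambda)e(\mu)$; this cannot be correct, since $\cd_2(\mu)$ lies in $H^4$ while $\cd_1(\lambda)e(\mu)$ lies in $H^6$. Expanding $\sum_{i=0}^1(1+\cd_1(\lambda))^{1-i}\da_i(\mu)=1+\cd_1(\lambda)+\da_1(\mu)$ degree by degree, the two things to prove are simply $\cd_1(\mu)=\cd_1(\lambda)$ in degree $2$ and $\cd_2(\mu)=\da_1(\mu)$ in degree $4$. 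The latter is immediate: both sides equal the Euler class $e(\mu)$ (the top twisted Chern class of a $\Cd$-bundle is its Euler class, and $\da_1(\mu)=e(\mu)$ by Theorem \ref{properties}(b)); no correction ``up to the twist by $\lambda$'' is needed, as $c^\delta_2(\mu)$ lives in $H^4(X;\Zz(\delta^{\otimes 2}))=H^4(X;\Zz)$, which is untwisted.

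The genuine gap is that you flag the degree-$2$ identity $\cd_1(\mu)=\cd_1(\lambda)$ as ``the main obstacle I expect'' without resolving it; Lemmas \ref{TU2} and \ref{split} alone do not finish it. The paper's resolution is a short skeleton argument: a class in $H^2$ is detected on the $3$-skeleton of $X$, and over the $3$-skeleton every $\Hl$-line $\mu$ admits a nowhere-zero section, whose $\Hl$-orbit identifies $\mu\cong\Hl=\Cd\oplus\lambda$ as a $\Cd$-bundle, whence $\cd_1(\mu)=\cd_1(\Cd\oplus\lambda)=\cd_1(\lambda)$. With that supplied and the degree-$4$ identity corrected, your argument coincides with the paper's; your first sketch via a fibration $\Cd P(\xi)\to\Ha P(\xi)$ could also be made to work but is left undeveloped and is not the route the paper takes.
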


\begin{proof} We use the splitting principle for $\Ha$-vector bundles
and multiplicativity of the Chern classes $\cd=1+\cd_1+\cd_2+\dots$ and the
characteristic classes $\da$. Thus it is sufficient to carry out 
the proof only for $n=1$.
Let $\mu$ be an $\Hl$-line, where $\lambda$ is a $\Cd$-line. Then 
$\cd_1(\mu)=\cd_1(\lambda)$, since we may assume that $X$ is 
a 3-dimensional CW-complex, over which $\mu=\Cd\oplus\lambda$. Further,
$\cd_2(\mu)=e(\mu)=\da_1(\mu)$. Consequently,
$$1+\cd_1(\mu)+\cd_2(\mu)=1+\cd_1(\lambda)+\da_1(\mu).$$
\end{proof}

\begin{rem}\label{twist2}
Consider an $\SO(3)$-bundle $\alpha$ and an $ \Ha$-bundle $\xi$ over
$X$.
We can lift to the sphere bundle of $\alpha$ by $\pi : S(\alpha )\to X$.
Over $S(\alpha)$ we have a complex line bundle $\lambda$
such that $\pi^*\alpha = \Rr\oplus\lambda$.  So $\pi^*\xi$ is an
$\Hl$-bundle.
In particular, $\pi^*\xi$ is complex and has Chern classes in 
$H^*(S(\alpha))$.
Since the Euler class of $\alpha$ is $2$-torsion, we have a rational
splitting:  $H^{2i}(S(\alpha ); \Qq) =
H^{2i}(X; \Qq) \oplus H^{2(i-1)}(X; \Qq)$. More precisely, $H^*(S(\alpha);    
\Qq)$ is an $H^*(X; \Qq)$-module with a generator $s\in H^2(S(\alpha);\Qq)$  
subject to a relation $s^2+as+b=0$ for some elements $a\in H^2(X;\Qq)$ and
$b\in H^4(X;\Qq)$. If $e(\alpha)=0$, this is true also over $\Zz$.

Now using Theorem \ref{Chern} 
and naturality of the classes $d_i$, the Chern classes of $\pi^*\xi$ are
$$1+c_1(\pi^*\xi)+\dots +c_{2n}(\pi^*\xi)=
\sum_{i=0}^n (1+c_1(\lambda))^{n-1}\pi_*d_i^{\alpha}(\xi).$$

If $X$ is an almost quaternionic smooth manifold as in Remark \ref{twist1},
then the tangent bundle of $S(\alpha)$ being isomorphic to 
$\lambda\oplus \pi^*\tau X$ has a complex structure, i.e. the twistor space 
$S(\alpha)$ is almost complex, which is well known in quaternionic geometry.
\end{rem}

\section{Complexified quaternionic bundles}

Given a real line bundle $\delta$ and an oriented $3$-dimensional vector
bundle $\alpha$ (with inner product) we may consider the bundle 
of algebras $\Ha\otimes\Cd$. It
depends only on the $3$-dimensional vector bundle $\delta\otimes\alpha$.
Indeed, it may be identified with the (ungraded) Clifford algebra bundle
$C(\alpha\otimes\delta)$ with the positive-definite quadratic form. 
Locally, if $e_1$, $e_2$, $e_3$ is an orthonormal basis of
$\alpha\otimes\delta$ then the corresponding fibre of
$C(\alpha\otimes\delta)$ is generated by $e_1$, $e_2$, $e_3$ with $e_i^2=1$
and $e_ie_j=-e_je_i$ for $i\ne j$, the fibre of $\Cd$ is $\Rr 1\oplus \Rr
(e_1e_2e_3)$, and the fibre of $\Ha$ is $\Rr 1\oplus \Rr (e_1e_2, e_2e_3,
e_1e_3)$. The centre (formed in each fibre) is $\Cd$.

There is an $\Rr$-algebra isomorphism $\Hh\otimes\Cc\to \End_{\Cc}(\Hh)$,
that is $M_2(\Cc)$, under which the group of automorphisms
$\Aut_\Rr(\Hh)\times\Aut_\Rr(\Cc)$ maps to the retract $\SU(2)/\{\pm 1\}\rtimes
\Aut_{\Rr}(\Cc)$ of the full automorphism group $\GL(2)/\Cc^*\rtimes \Aut_{\Rr}(\Cc)$
of $M_2(\Cc)$.

Consequently, we can describe bundles of algebras $\Ha\otimes\Cd$ as just those
$\Rr$-algebra bundles with fibres of type $\Hh\otimes\Cc$ which
have the structure group $\Aut_\Rr(\Hh)\times\Aut_\Rr(\Cc)=\SO(3)\times\O(1)
=\O(3)$.

Consider a $4$-dimensional $\Cd$-vector bundle $\mu$ which is a left
$\Ha\otimes\Cd$-bundle. This means that the fibre  is isomorphic to 
the complex vector space $\Hh\otimes\Cc$ with the obvious action of the algebra
$\Hh\otimes\Cc$. On $\mu$ we can choose a real  inner 
product such that the
action $\Ha\otimes\Cd\to\End_{\Cd}(\mu)$ is a $*$-homomorphism. 
We will show that $\mu$ is associated  to a principal bundle 
with the structure group $\TSp(\Hh)\cdot\TU(\Cc)
=\SO(\Hh)\cdot \TU(\Cc)\subset\TU(\Hh\otimes\Cc)$. In traditional terminology
this is the group
$(\SO(4)\times \TU(1))/\{(\pm 1\}=\SO(4)\cdot\TU(1)\subset \TU(4)$.
Put
\begin{align*}
\operatorname{Fr}(\mu)=\{&f\in\Hom_\Rr(\Hh\otimes\Cc,\mu) \mid
\text{$f$ preserves the inner product, $f(rv)=\kappa(r)f(v)$}\\
&\text{for some $\kappa\in \Iso(\Hh,\Ha)\times\Iso(\Cc,\Cd)$ and all 
$v\in\Hh\otimes\Cc$, $r\in \Hh\otimes\Cc$}\}.
\end{align*}
Then $P=\operatorname{Fr}(\mu)$ is a principal bundle with the structure
group $\TSp(\Hh)\cdot\TU(\Cc)$ such that
$\mu=P\times_{\TSp(\Hh)\cdot\TU(\Cc)}(\Hh\otimes\Cc)$ and
$\Ha\otimes\Cd=P\times_{p}(\Hh\otimes\Cc)$, where $p:\TSp(\Hh)\cdot\TU(\Cc)
\to\Aut_{\Rr}(\Hh)\times\Aut_\Rr(\Cc)$ is the projection. This proves

\begin{prop}\label{so4tu1}
A $4$-dimensional $\Cd$-vector bundle has an $\Ha\otimes\Cd$-module
structure if and only if its structure group $\TU(4)$ can be reduced to
$\SO(4)\cdot\TU(1)$.
\end{prop}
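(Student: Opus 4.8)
The plan is to prove both implications through the standard correspondence between module structures over a bundle of algebras and reductions of the structure group, exactly as in Lemma \ref{TSp}. Write $V=\Hh\otimes\Cc$ for the fibre, viewed as a $4$-dimensional complex vector space carrying the regular action of the algebra $\Hh\otimes\Cc$, and set $G=\TSp(\Hh)\cdot\TU(\Cc)$, which the preceding discussion identifies with $\SO(4)\cdot\TU(1)\subset\TU(4)$.

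First I would treat the ``if'' direction. Suppose the structure group of the $\Cd$-bundle $\mu$ reduces to $G$, so that $\mu=P\times_G V$ for a principal $G$-bundle $P$. The homomorphisms $\TSp(\Hh)\to\Aut_\Rr(\Hh)$ and $\TU(\Cc)\to\Aut_\Rr(\Cc)$ assemble into a projection $p:G\to\Aut_\Rr(\Hh)\times\Aut_\Rr(\Cc)$, and forming the associated algebra bundle yields $\Ha\otimes\Cd=P\times_p V$. By the definitions of $\TSp$ and $\TU$, every $g\in G$ satisfies $g(rv)=p(g)(r)\,g(v)$ for $r\in\Hh\otimes\Cc$ and $v\in V$; hence the multiplication $(\Hh\otimes\Cc)\otimes_\Rr V\to V$ is $G$-equivariant and descends to an $\Ha\otimes\Cd$-module structure on $\mu$.

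For the converse I would start from a $4$-dimensional $\Cd$-bundle $\mu$ equipped with an $\Ha\otimes\Cd$-module structure, choose a compatible real inner product as in Remark \ref{metric} so that the action is a $*$-homomorphism, and form the frame bundle $\operatorname{Fr}(\mu)$ of real isometries $f:V\to\mu$ satisfying $f(rv)=\kappa(r)f(v)$ for some $\kappa\in\Iso(\Hh,\Ha)\times\Iso(\Cc,\Cd)$. This is a principal $G$-bundle with $\mu=\operatorname{Fr}(\mu)\times_G V$ and $\Ha\otimes\Cd=\operatorname{Fr}(\mu)\times_p V$, which is exactly the required reduction.

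The one step that is not formal, and so the main obstacle, is the identification of the relevant structure group as precisely $G=\SO(4)\cdot\TU(1)$: one must check that the real isometries of $V=\Hh\otimes\Cc$ which are $\Cc$-linear or conjugate-linear and intertwine the algebra action up to an element of $\Aut_\Rr(\Hh)\times\Aut_\Rr(\Cc)$ are exactly $\TSp(\Hh)\cdot\TU(\Cc)$. I would deduce this from the isomorphism $\Hh\otimes\Cc\cong\End_\Cc(\Hh)=M_2(\Cc)$ recorded above, whose compatible automorphisms form the retract $\SU(2)/\{\pm1\}\rtimes\Aut_\Rr(\Cc)$, and then cut the module automorphisms down using the inner-product and complex-structure constraints. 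This is the complex-linear analogue of Lemma \ref{groups} together with the computation $\TU(2)\cong\TU(1)\cdot\Sp(1)$ already used above, so no genuinely new difficulty arises; once the group is pinned down, both associated-bundle constructions are routine.
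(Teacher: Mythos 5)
Your proposal is correct and follows essentially the same route as the paper: the paper also chooses a compatible real inner product making the action a $*$-homomorphism, forms the frame bundle $\operatorname{Fr}(\mu)$ of intertwining real isometries, and identifies it as a principal $\TSp(\Hh)\cdot\TU(\Cc)=\SO(4)\cdot\TU(1)$-bundle realizing both $\mu$ and $\Ha\otimes\Cd$ as associated bundles. The only difference is that you explicitly flag the verification that this group is the full set of such isometries, a point the paper asserts via its preceding identification of $\TSp(\Hh)\cdot\TU(\Cc)$ inside $\TU(\Hh\otimes\Cc)$ rather than proving in detail.
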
 

Now we can proceed as in Section 2. In many of the arguments $\SO(4)\cdot\TU(1)$
plays the role taken there by $\SO(4)$.
 
Since $\TU(1)$ is a semidirect product of groups $\U(1)$ and $\Zz/2$, its
elements can be described by pairs $(c,s)\in \U(1)\times
\Zz/2$ having the action on $\Cc$ given by $(c,1)z=cz$ and $(c,-1)z=c\bar{z}$.
Consider the  covers
$$\Sp(1)\times \Sp(1)\times\TU(1)\to \SO(4)\cdot\TU(1)
\xrightarrow{(\rho_+,\,\rho_-,\,\rho_0)} \SO(3)\times\SO(3)
\times\TU(1)$$
given by mapping $(a,b,(c,s))\in \Sp(1)\times\Sp(1)\times\TU(1)$ to the map
$g:\Hh\otimes\Cc\to \Hh\otimes\Cc:\, v\otimes z\mapsto av\bar{b}\otimes
(c,s)z$
and $g$ to $(\rho_+(g),\rho_-(g),\rho_0(g))=(\rho(a),\rho(b),z\mapsto
(c^2,s)z)$. Further, let $\tau:\SO(4)\cdot\TU(1)\to \Aut(\Cc)=\Zz/2$
be the map $\tau(g)=s$.

\begin{prop}\label{Ha_Hb_Cd}
Let $\mu$ be a $4$-dimensional $\Cd$-vector bundle over $X$ the structure
group of which can be reduced to $\SO(4)\cdot\TU(1)$. Put
$\alpha=\rho_+(\mu)$, $\beta=\rho_-(\mu)$, $\sigma=\rho_0(\mu)$.
Then $\Cd=\tau(\mu)$, the vector bundle $\mu$ is a left
$\Ha\otimes\Cd$-module and
a right $\Hb\otimes\Cd$-module, $\sigma$ is a $\Cd$-line bundle and
there is a canonical isomorphism of bundles of $\Cd$-algebras
$$(\Ha\otimes\Cd)\otimes_{\Cd}(\Hb\otimes\Cd)^\op=
(\Ha\otimes\Hb^\op)\otimes\Cd\cong\End_{\Cd}(\mu).$$
Moreover, there are canonical isomorphisms of vector bundles
$$(\alpha\oplus\beta)\otimes\sigma\cong\Lambda^2_{\Cd}\mu
\quad\text{and}\quad \bar{\sigma}^{\otimes 2}\otimes_{\Cd}\Lambda^4\mu
\cong\Cd$$
where $\bar{\sigma}$ is the conjugate vector bundle to $\sigma$.

Conversely, if $\alpha$ is an oriented orthogonal $3$-dimensional vector
bundle and $\mu$ is an $\Hh_\alpha\otimes\Cd$- bundle of dimension $4$ 
over $\Cd$, then the real
form of $\bar{\sigma}^{\otimes 2}\otimes_{\Cd}\Lambda^4\mu$ acquires
an orientation under which $\rho_+(\mu )$ is identified with
$\alpha$, $\tau(\mu)$ is identified with $\Cd$ and $\beta =\rho_-(\mu )$ is characterized
by an isomorphism (of bundles of algebras)
$$
(\Hh_\beta\otimes\Cd)^\op = \End_{\Hh_\alpha\otimes\Cd} (\mu ).
$$

\end{prop}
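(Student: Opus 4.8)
The plan is to exploit the associated-bundle formalism developed for Proposition~\ref{Ha_Hb}, now with the larger structure group $\SO(4)\cdot\TU(1)$ in place of $\SO(4)=\SO(\Hh)$. Since $\mu$ reduces to $\SO(4)\cdot\TU(1)$, there is a principal bundle $P$ with $\mu = P\times_{\SO(4)\cdot\TU(1)}(\Hh\otimes\Cc)$, and all the bundles in sight ($\Ha$, $\Hb$, $\sigma$, $\Cd$) are associated to $P$ via the representations $\rho_+,\rho_-,\rho_0,\tau$ defined just before the statement. The whole strategy is then to reduce each assertion to an identity of $\SO(4)\cdot\TU(1)$-equivariant maps between the standard fibres, exactly as in the proofs of Proposition~\ref{Ha_Hb} and Lemma~\ref{lambda2}.

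First I would check the algebra isomorphism. The equality
$(\Ha\otimes\Cd)\otimes_{\Cd}(\Hb\otimes\Cd)^\op = (\Ha\otimes\Hb^\op)\otimes\Cd$
is formal, since tensoring over $\Cd$ of two objects each induced from $\Cd$ returns the $\Cd$-tensor of the real parts complexified. For the isomorphism with $\End_{\Cd}(\mu)$ I would complexify the isomorphism $\Ha\otimes\Hb^\op\cong\End_\Rr(\mu)$ from Proposition~\ref{Ha_Hb}: the map $h_1\otimes h_2\mapsto(v\mapsto h_1vh_2)$ is $\SO(\Hh)$-equivariant, so after tensoring with $\Cd$ and using $\End_\Rr(\mu)\otimes\Cd$-versus-$\End_{\Cd}(\mu)$ it descends to a $\Cd$-algebra isomorphism on associated bundles. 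The fibrewise model is the standard $\Hh\otimes\Hh^{\op}\otimes\Cc\cong\End_\Cc(\Hh\otimes\Cc)$.

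Next I would establish the two vector-bundle identities by writing down $\SO(4)\cdot\TU(1)$-invariant maps on fibres. For the first, recall from Lemma~\ref{lambda2} that $\Lambda^+\oplus\Lambda^-\colon\im\Hh\oplus\im\Hh\to\Lambda^2\Hh$ is $\SO(\Hh)$-equivariant; complexifying and tracking the extra $\TU(1)$-factor one sees that $\Lambda^2_{\Cd}(\Hh\otimes\Cc)$ carries the action $\rho_+\oplus\rho_-$ twisted by the determinant character $\rho_0$, yielding $(\alpha\oplus\beta)\otimes\sigma\cong\Lambda^2_{\Cd}\mu$. For the second, $\Lambda^4(\Hh\otimes\Cc)$ is one-dimensional over $\Cc$ and the generator is acted on by the $\TU(1)$-factor through $z\mapsto z^2$ (since $\det$ of the $\TU(1)$-action on $\Hh\otimes\Cc\cong\Cc^4$ contributes the square of the scaling character), which matches $\sigma^{\otimes2}$; hence $\bar\sigma^{\otimes2}\otimes_{\Cd}\Lambda^4\mu$ is the trivial $\Cd$-line. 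The identities $\Cd=\tau(\mu)$ and the module structures are immediate from the definitions of $\tau$ and of the $\rho$'s.

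For the converse I would mirror the second half of Proposition~\ref{Ha_Hb}. Given $\mu$ an $\Ha\otimes\Cd$-bundle of $\Cd$-dimension~$4$, the real form of $\bar\sigma^{\otimes2}\otimes_{\Cd}\Lambda^4\mu$ is a trivialisable real line bundle, and choosing a trivialisation orients the underlying real $8$-plane bundle; this orientation is precisely what is needed to build (as in Lemma~\ref{TSp}) a principal $\SO(4)\cdot\TU(1)$-bundle $P$ with $\mu=P\times(\Hh\otimes\Cc)$, $\Ha=P\times_{\rho_+}\Hh$ and $\Cd=P\times_\tau\Cc$, giving $\rho_+(\mu)=\alpha$ and $\tau(\mu)=\Cd$. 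Finally $\beta=\rho_-(\mu)$ is recovered by identifying the image of the monomorphism $(\Hb\otimes\Cd)^\op\hookrightarrow(\Ha\otimes\Hb^\op)\otimes\Cd\cong\End_{\Cd}(\mu)$ with the $\Cd$-algebra of $(\Ha\otimes\Cd)$-linear endomorphisms, exactly the argument used for $\Hb^\op=\End_{\Ha}(\mu)$ before. \textbf{The main obstacle} I anticipate is bookkeeping the $\TU(1)$-twist consistently: getting the correct power of $\sigma$ (and its conjugate) in each identity, and making sure the orientation used in the converse is the one canonically pinned down by the trivialisation of $\bar\sigma^{\otimes2}\otimes_{\Cd}\Lambda^4\mu$ rather than an ad hoc choice, so that $\rho_+(\mu)$ really is identified with the given $\alpha$ and not its conjugate.
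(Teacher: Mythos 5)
Your proposal is correct and follows essentially the same route as the paper: the authors likewise prove this by complexifying the vector spaces and equivariant homomorphisms from the proofs of Proposition~\ref{Ha_Hb} and Lemma~\ref{lambda2}, checking invariance under the enlarged group $\SO(4)\cdot\TU(1)$, and realising $\alpha$, $\beta$, $\Cd$, $\sigma$, $\Lambda^2_{\Cd}\mu$ and $\Lambda^4_{\Cd}\mu$ as bundles associated to the principal $\SO(4)\cdot\TU(1)$-bundle determined by $\mu$. Your tracking of the $\TU(1)$-character on $\Lambda^2_{\Cd}$ and $\Lambda^4_{\Cd}$ supplies exactly the details the paper leaves to the reader.
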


\begin{proof} The $4$-dimensional $\Cc$-vector space $\Hh\otimes\Cc$ is
the complexification of the real vector space $\Hh$. Now we can carry out the proof
by complexifying all the vector spaces $\im\Hh$, $\End_{\Rr}(\Hh)$,
$\Hh\otimes\Hh^{\op}$, $\Lambda^2\Hh$, $\Lambda^4\Hh$ and
the homomorphisms used in the proof of Proposion \ref{Ha_Hb} and
Lemma \ref{lambda2}, by checking
that
these complexified homomorphisms are invariant with respect to appropriate
actions of $\SO(4)\cdot\TU(1)$ and by writing $\alpha$, $\beta$, $\Cd$,
$\sigma$,
$\Lambda_{\Cd}^2\mu$ and $\Lambda^4_{\Cd}$ as vector bundles
associated to a $\SO(4)\cdot\TU(1)$-principal bundle determined by $\mu$. 
Details are left to the reader.
\end{proof}

\begin{rem}\label{char_cl} The isomorphism
$\Lambda_{\Cd}^4\mu\cong\sigma^{\otimes 2}$ implies that
$\cd_1(\mu)=2\cd_1(\sigma)$, and from the isomorphism
$(\alpha\oplus\beta)\otimes\sigma\cong\Lambda_{\Cd}^2\mu$ we get
$2\cd_2(\mu)=-p_1(\alpha)-p_1(\beta)+3(\cd_1(\sigma))^2$.
\end{rem}

\begin{prop} \label{abs}
Given $3$-dimensional oriented vector bundles $\alpha$, $\beta$ and 
a $\Cd$-vector bundle $\sigma$, there is a $4$-dimensional vector bundle
$\mu$ such that $\rho_+(\mu)=\alpha$, $\rho_-(\mu)=\beta$ and
$\rho_0(\mu)=\sigma$ if and only if
$w_2(\sigma)=w_2(\alpha)+w_2(\beta)$.
\end{prop}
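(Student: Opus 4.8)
The plan is to treat this as a structure-group reduction problem, in exactly the spirit of the proof of Theorem \ref{w2}. By Proposition \ref{so4tu1} and the discussion preceding Proposition \ref{Ha_Hb_Cd}, giving a bundle $\mu$ with $\rho_+(\mu)=\alpha$, $\rho_-(\mu)=\beta$, $\rho_0(\mu)=\sigma$ amounts to lifting the classifying map of $(\alpha,\beta,\sigma)\colon X\to B(\SO(3)\times\SO(3)\times\TU(1))$ along $B(\rho_+,\rho_-,\rho_0)$. So I would first analyse the homomorphism $(\rho_+,\rho_-,\rho_0)\colon\SO(4)\cdot\TU(1)\to\SO(3)\times\SO(3)\times\TU(1)$. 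It is surjective (because $\rho\colon\Sp(1)\to\SO(3)$ is onto and $c\mapsto c^2$ is onto $\U(1)$), and its kernel is central and isomorphic to $\Zz/2$. Indeed, $\rho(a)=\rho(b)=1$ and $(c^2,s)=(1,1)$ force $a,b,c\in\{\pm1\}$ and $s=1$; modulo the covering kernel these eight elements collapse to just $\{I,-I\}$, where $-I$ is multiplication by $-1$ on $\Hh\otimes\Cc$, which is central. Thus we have a central $\Zz/2$-extension with untwisted coefficients, so the obstruction to the lift is a single class $k(\alpha,\beta,\sigma)\in H^2(X;\Zz/2)$ — the pullback of the class classifying the extension — and the lift (hence $\mu$) exists if and only if this class vanishes.

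The second key step is to compute the universal class in $H^2(B(\SO(3)\times\SO(3)\times\TU(1));\Zz/2)$ by restricting the extension to the three factors. Restricted to the first $\SO(3)$ (with $\beta$, $\sigma$ trivial) the extension is $\Spin(3)=\Sp(1)\to\SO(3)$, since the element $a=-1$ maps to $-I$; its contribution is therefore $w_2(\alpha)$, and symmetrically the second factor contributes $w_2(\beta)$. On the $\TU(1)=\O(2)$ factor I would split into the identity component $\SO(2)=\U(1)$, over which $\rho_0$ is the squaring double cover $c\mapsto c^2$ and so is classified by $w_2$, and a reflection subgroup $\Zz/2$ generated by complex conjugation of the $\Cc$-factor. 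The point is that this reflection lifts to the element $\mathrm{id}\otimes(\text{conjugation})$ of $\SO(4)\cdot\TU(1)$, which has order two, so the extension splits over the reflection subgroup. Consequently there is no $w_1(\sigma)^2$ term, and the $\TU(1)$-contribution is exactly $w_2(\sigma)$. Combining the three factors gives $k(\alpha,\beta,\sigma)=w_2(\alpha)+w_2(\beta)+w_2(\sigma)$.

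The conclusion is then immediate: $\mu$ exists if and only if $w_2(\alpha)+w_2(\beta)+w_2(\sigma)=0$, that is, $w_2(\sigma)=w_2(\alpha)+w_2(\beta)$. The main obstacle is the $\TU(1)$-computation. A priori the classifying class on $B\O(2)$ could be $w_2$ or $w_2+w_1(\sigma)^2$, both restricting to the nonzero class on $B\SO(2)$; the $w_1(\sigma)^2$ ambiguity is the crux and is resolved precisely by the observation that conjugation on $\Hh\otimes\Cc$ admits an order-two lift, forcing the extension to be trivial over the reflection subgroup (under which $w_2\mapsto 0$ but $w_1^2\mapsto t^2\neq0$). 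Everything else is the same bookkeeping with Stiefel--Whitney classes and central extensions already used in the proof of Theorem \ref{w2}.
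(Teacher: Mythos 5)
Your argument is correct, but the way you pin down the obstruction class is genuinely different from the paper's. Both proofs share the framing: $(\rho_+,\rho_-,\rho_0)$ is a central $\Zz/2$-covering of $\SO(3)\times\SO(3)\times\TU(1)$, so $\mu$ exists if and only if the pullback of a single class in $H^2(B(\SO(3)\times\SO(3)\times\TU(1));\Zz/2)$ vanishes. The paper identifies that class by the same device as in Theorem \ref{w2}: using $\Lambda^2_{\Cc}$ and the exceptional isomorphism $\SU(4)\cong\Spin(6)$, the cover is exhibited as the pullback of $\Spin(6)\cdot\TU(1)\to\SO(6)\cdot\TU(1)$ under $(a,b,c)\mapsto (a\oplus b)\otimes c$, so the obstruction is the $\Spin^c$-type condition $w_2(\alpha\oplus\beta)=w_2(\sigma)$ for the bundle $(\alpha\oplus\beta)\otimes\sigma=\Lambda^2_{\Cd}\mu$. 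You instead compute the universal class directly by restricting to the three factors; this is legitimate because, by K\"unneth and $H^1(B\SO(3);\Zz/2)=0$, there are no cross terms in degree two, so the class is detected by the factors — a point worth stating explicitly. Your kernel computation is right (the eight elements with $a,b,c\in\{\pm1\}$, $s=1$ form two cosets of the four-element covering kernel, giving $\{\pm I\}$), the restrictions to the two $\SO(3)$ factors do give $\Spin(3)\to\SO(3)$ and hence $w_2(\alpha)$ and $w_2(\beta)$, and your resolution of the only real ambiguity — $w_2(\sigma)$ versus $w_2(\sigma)+w_1(\sigma)^2$ on the $\TU(1)=\O(2)$ factor — by exhibiting the order-two lift $\mathrm{id}\otimes(\text{conjugation})$ of the reflection is exactly what is needed, since $w_1^2$ restricts nontrivially to the reflection subgroup while $w_2$ does not. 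Your route is more elementary and self-contained (no exceptional isomorphism), at the cost of the explicit kernel and reflection-splitting checks; the paper's route is shorter and makes the answer conceptually transparent as the twisted $\Spin^c$-condition for $\Lambda^2_{\Cd}\mu$.
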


\begin{proof} The commutative diagram
$$
\xymatrix{
\SO(4)\cdot\TU(1)\ar[d]_{(\rho_+,\,\rho_-,\,\rho_0)} \ar[r] \ar[rrd]
 & \SU(4)\cdot\TU(1)\ar[r]^{\cong}\ar[rd]^{\Lambda^2_{\Cc}}
& \Spin(6)\cdot\TU(1) \ar[d]\\
\SO(3)\times \SO(3)\times \TU(1) \ar[rr] &
& \SO(6)\cdot\TU(1)
}
$$
implies that the double cover $\Spin(6)\cdot\TU(1)\to\SO(6)\cdot\TU(1)$
pulls back, under the homomorphism $(a,b,c)\mapsto (a+b)\otimes c$,
to $(\rho_+,\rho_-, \rho_0)$.
Hence the obstruction to lifting from $\SO (3)\times\SO (3)\times\TU(1)$ 
to $\SO (4)\cdot\TU(1)$
is given by $w_2(\sigma)=w_2(\alpha)+w_2(\beta)$.
\end{proof}

\begin{Def}
For each $\delta$ we define a Morita category $\mathcal{M}_\delta(X)$ with objects the
oriented $3$-dimensional orthogonal vector bundles $\alpha$, $\beta$ and morphisms
$\beta\to\alpha$ given by a $\Cd$-isomorphism of algebras
$$(\Ha\otimes\Cd)\otimes_{\Cd}(\Hb\otimes\Cd)^\op=(\Ha\otimes\Hb^\op)\otimes\Cd
\to\End_{\Cd}(\mu),$$
where $\mu$ is a $4$-dimensional $\Cd$-vector bundle, up to isomorphisms of
$\mu$.
\end{Def}

Given any
$\Cd$-line bundle $\lambda$, we use the isomorphism $\End_{\Cd}(\mu)=
\End_{\Cd}(\lambda\otimes_{\Cd}\mu)$ to get an action of the Piccard group
$\operatorname{Pic}_\delta(X)=H^2(X;\Zz(\delta))$ on
$\Hom_{\mathcal{M}_\delta(X)}(\beta,\alpha)$, and then, as in
Proposition \ref{morphisms} we have
$\Aut_{\mathcal{M}_\delta(X)}(\alpha)=\operatorname{Pic}_\delta(X)$.

Proposition \ref{abs} implies immediately

\begin{thm}\label{Me_Cd}
There is a morphism from $\beta$ to $\alpha$ in $\mathcal{M}_\delta(X)$
if and only if
$e(\alpha\otimes\delta)=e(\beta\otimes\delta)$.
\end{thm}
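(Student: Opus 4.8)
The plan is to read off the existence of a morphism in $\mathcal{M}_\delta(X)$ as a Stiefel--Whitney condition via Propositions \ref{Ha_Hb_Cd} and \ref{abs}, and then to reinterpret that condition as the asserted equality of twisted Euler classes.

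First I would observe that, by Proposition \ref{Ha_Hb_Cd}, a morphism $\beta\to\alpha$ in $\mathcal{M}_\delta(X)$ is the same datum as a $4$-dimensional $\Cd$-vector bundle $\mu$ with $\rho_+(\mu)=\alpha$ and $\rho_-(\mu)=\beta$, the third invariant $\sigma=\rho_0(\mu)$ being unconstrained. By Proposition \ref{abs}, such a $\mu$ realizing a prescribed $\sigma$ exists if and only if $w_2(\sigma)=w_2(\alpha)+w_2(\beta)$. Hence a morphism $\beta\to\alpha$ exists if and only if there is a $\Cd$-line bundle $\sigma$ with $w_2(\sigma)=w_2(\alpha)+w_2(\beta)$.

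Next I would identify the range of $w_2(\sigma)$. Since $\Cd$-line bundles are classified by their Euler class $e(\sigma)\in H^2(X;\Zz(\delta))$, with $w_2(\sigma)=\rho_2 e(\sigma)$, the classes arising as $w_2(\sigma)$ are exactly $\im\rho_2$, where $\rho_2\colon H^2(X;\Zz(\delta))\to H^2(X;\Zz/2)$. By exactness of the twisted Bockstein sequence associated to $0\to\Zz(\delta)\xrightarrow{2}\Zz(\delta)\to\Zz/2\to 0$, this image equals $\ker\Delta_\delta$, where $\Delta_\delta\colon H^2(X;\Zz/2)\to H^3(X;\Zz(\delta))$ is the twisted Bockstein. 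Thus a morphism exists if and only if $\Delta_\delta\bigl(w_2(\alpha)+w_2(\beta)\bigr)=0$.

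Finally I would convert this into Euler classes. A splitting-principle computation gives $w_2(\alpha\otimes\delta)=w_2(\alpha)+w_1(\delta)^2$, and likewise for $\beta$, so the two differ only by the common term $w_1(\delta)^2$. The crux is the twisted analogue of the classical identity $e=\Delta w_2$ for oriented $3$-plane bundles: for a rank-$3$ bundle $\gamma$ with $w_1(\gamma)=w_1(\delta)$ one has $e(\gamma)=\Delta_\delta w_2(\gamma)$ in $H^3(X;\Zz(\delta))$. I would verify this by naturality from the universal example, noting that both sides are $2$-torsion and both reduce mod $2$ to $w_3(\gamma)$ (the right-hand side by the twisted Wu relation $Sq^1_\delta w_2=w_3$, valid precisely because $w_1(\gamma)=w_1(\delta)$). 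Applying this to $\gamma=\alpha\otimes\delta$ and $\gamma=\beta\otimes\delta$ and subtracting, the common $\Delta_\delta\bigl(w_1(\delta)^2\bigr)$ cancels, giving $e(\alpha\otimes\delta)-e(\beta\otimes\delta)=\Delta_\delta\bigl(w_2(\alpha)+w_2(\beta)\bigr)$. Combined with the previous step, a morphism $\beta\to\alpha$ exists if and only if $e(\alpha\otimes\delta)=e(\beta\otimes\delta)$. The main obstacle is the twisted Euler-class identity; the rest is formal bookkeeping with the Bockstein sequence.
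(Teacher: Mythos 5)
Your argument is correct and follows essentially the same route as the paper: reduce via Propositions \ref{Ha_Hb_Cd} and \ref{abs} to the existence of a $\Cd$-line bundle $\sigma$ with $w_2(\sigma)=w_2(\alpha)+w_2(\beta)$, characterize that condition as $\Delta_\delta w_2(\alpha)=\Delta_\delta w_2(\beta)$ via the twisted Bockstein sequence, and translate into Euler classes using $e(\alpha\otimes\delta)=\Delta_\delta\bigl(w_2(\alpha)+w_1(\delta)^2\bigr)$. The only difference is that you supply a justification (naturality from the universal example plus the twisted Wu relation) for this last identity, which the paper simply asserts.
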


\begin{proof}
Denote the Bockstein homomorphism corresponding to the exact sequence
$0\to\Zz(\delta)\to\Zz(\delta)\to \Zz/2\to 0$ by $\Delta_{\delta}$.
A $\Cd$-line bundle $\sigma$ such that $w_2(\sigma)=
w_2(\alpha)+w_2(\beta)$ exists if and only if $\Dd w_2(\alpha)=\Dd
w_2(\beta)$
and this is equivalent to our condition, since $e(\alpha\otimes\delta)=
\Dd(w_2(\alpha)+w_1^2(\delta))$.
\end{proof}

\begin{prop}\label{modules}
Any isomorphism $\Ha\otimes\Hb^{\op}\otimes\Cd\cong\End_{\Cd}(\mu)$
determines an equivalence from the category of $\Hb\otimes\Cd$-modules
to the category of $\Ha\otimes\Cd$-modules given by
$$\eta\mapsto\mu\otimes_{\Hb\otimes\Cd}\eta.$$
\end{prop}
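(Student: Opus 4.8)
The plan is to deduce Proposition \ref{modules} as a formal consequence of the groupoid structure of the Morita category $\mathcal{M}_\delta(X)$, exactly as Proposition \ref{Morita_eq} was deduced from the groupoid structure of $\mathcal{M}(X)$. The key observation is that an isomorphism $\Ha\otimes\Hb^\op\otimes\Cd\cong\End_{\Cd}(\mu)$ is precisely the data of a morphism $\beta\to\alpha$ in $\mathcal{M}_\delta(X)$, and that the assignment $\eta\mapsto\mu\otimes_{\Hb\otimes\Cd}\eta$ is how composition was defined in that category.

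First I would record that the association
$$
\alpha\mapsto \{\text{left }\Ha\otimes\Cd\text{-modules}\}
$$
is a functor on $\mathcal{M}_\delta(X)$: a morphism $\mu$ from $\beta$ to $\alpha$ sends an $\Hb\otimes\Cd$-module $\eta$ to the $\Ha\otimes\Cd$-module $\mu\otimes_{\Hb\otimes\Cd}\eta$, where the left $\Ha\otimes\Cd$-action comes from the isomorphism $\Ha\otimes\Hb^\op\otimes\Cd\cong\End_{\Cd}(\mu)$ (the factor $\Hb^\op$ being absorbed by the tensor product over $\Hb\otimes\Cd$). This is well-defined up to isomorphism because $\mu$ is determined up to isomorphism as an $\Ha\otimes\Hb^\op\otimes\Cd$-module, and it respects composition by associativity of the relative tensor product, just as in Definition \ref{Morita} and the discussion of $\mathcal{M}(X)$.

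Next I would invoke that $\mathcal{M}_\delta(X)$ is a groupoid. This was established in the same manner as for $\mathcal{M}(X)$: the inverse of the morphism represented by $\mu$ is represented by the dual $\mu^*$, giving $\Hb\otimes\Ha^\op\otimes\Cd\cong\End_{\Cd}(\mu^*)$. Consequently the functor attached to $\mu^*$ is a two-sided inverse, up to natural isomorphism, to the functor attached to $\mu$: the natural isomorphisms $\mu^*\otimes_{\Ha\otimes\Cd}(\mu\otimes_{\Hb\otimes\Cd}\eta)\cong\eta$ and its counterpart follow from the canonical isomorphisms $\mu^*\otimes_{\Ha\otimes\Cd}\mu\cong\Hb\otimes\Cd$ and $\mu\otimes_{\Hb\otimes\Cd}\mu^*\cong\Ha\otimes\Cd$ that express the groupoid inverse. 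Hence $\eta\mapsto\mu\otimes_{\Hb\otimes\Cd}\eta$ is an equivalence of categories.

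I expect the only genuine point requiring care — the main obstacle, such as it is — to be the bookkeeping of the $\Cd$-linear structure throughout, ensuring that all tensor products are taken over $\Cd$ where the fibres are modules over the central subalgebra $\Cd$, and that the duality $\mu\mapsto\mu^*$ used for the groupoid inverse is the $\Cd$-linear (conjugate) dual rather than the real dual. Once the $\Cd$-linearity is consistently tracked, the proof is purely formal and reduces, as in Proposition \ref{Morita_eq}, to the single sentence that $\mathcal{M}_\delta(X)$ is a groupoid.
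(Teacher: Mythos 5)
Your proposal is correct and matches the paper's (implicit) argument: the paper omits a proof here precisely because, as with Proposition \ref{Morita_eq}, the statement is a formal consequence of $\mathcal{M}_\delta(X)$ being a groupoid, with the inverse morphism given by the dual bundle. Your additional care about taking tensor products and duals $\Cd$-linearly is the right bookkeeping and is consistent with how the paper sets up $\mathcal{M}_\delta(X)$.
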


As a consequence we get

\begin{cor}\label{gen}
Let $e(\alpha\otimes\delta)=0$. Then there is an
$\Ha\otimes\Cd$-bundle $\omega$ of $\Cd$-dimension $2$ such that
any $\Ha\otimes\Cd$-bundle of $\Cd$-dimension $2n$ is of the form 
$\omega\otimes_{\Cd}\zeta$
for a $\Cd$-bundle $\zeta$ of dimension $n$.
\end{cor}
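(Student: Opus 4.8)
The plan is to reduce everything to producing a single rank-$2$ building block. Concretely, I claim that the hypothesis $e(\alpha\otimes\delta)=0$ is precisely the obstruction to the existence of a $2$-dimensional $\Cd$-vector bundle $\omega$ admitting a $\Cd$-algebra isomorphism $\Ha\otimes\Cd\cong\End_{\Cd}(\omega)$. Granting such an $\omega$, the corollary is a formal consequence of Morita theory of the kind exemplified by Proposition \ref{modules}: since $\End_{\Cd}(\omega)$ is Morita equivalent to $\Cd$, the functor $\zeta\mapsto\omega\otimes_{\Cd}\zeta$ from $\Cd$-bundles to $\Ha\otimes\Cd=\End_{\Cd}(\omega)$-bundles is an equivalence, with inverse $M\mapsto\Hom_{\Ha\otimes\Cd}(\omega,M)$. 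Fibrewise (where $\End_{\Cd}(\omega)\cong M_2(\Cc)$ acts on $\omega\cong\Cc^2$) an $\Ha\otimes\Cd$-bundle $M$ of $\Cd$-dimension $2n$ has $\zeta=\Hom_{\Ha\otimes\Cd}(\omega,M)$ of dimension $n$, and the evaluation $\omega\otimes_{\Cd}\zeta\to M$ is the required isomorphism. So the whole content lies in constructing $\omega$.

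For that, I would first rewrite the hypothesis. By the computation in the proof of Theorem \ref{Me_Cd} one has $e(\alpha\otimes\delta)=\Dd(w_2(\alpha)+w_1^2(\delta))$, so by exactness of the Bockstein sequence of $0\to\Zz(\delta)\to\Zz(\delta)\to\Zz/2\to 0$ the vanishing $e(\alpha\otimes\delta)=0$ is equivalent to $w_2(\alpha)=\rho_2(l)+w_1^2(\delta)$ for some $l\in H^2(X;\Zz(\delta))$. This is exactly the hypothesis of Lemma \ref{TSpinc} applied with $\beta=\alpha$, which therefore supplies a $2$-dimensional $\Cd$-vector bundle $\omega$ with $\det_{\Cd}\omega=\lambda$, $e(\lambda)=l$, and $\Ha=\End_{\Hl}(\omega)^{\op}$. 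By Lemma \ref{TU2} the same $\omega$ is an $\Hl$-line bundle, with $\rho_+(\omega)=\delta\oplus\lambda$ and $\rho_-(\omega)=\alpha$.

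It remains to pass from the $\Hl$-linear to the $\Cd$-linear endomorphisms. Viewing $\omega$ as a $4$-dimensional real bundle, Proposition \ref{Ha_Hb} gives $\Hl\otimes\Ha^{\op}\cong\End_{\Rr}(\omega)$. The $\Cd$-structure on $\omega$ is the restriction of its $\Hl$-structure along $\Cd=\Rr\oplus\delta\subset\Rr\oplus(\delta\oplus\lambda)=\Hl$, so $\End_{\Cd}(\omega)$ is the centralizer of $\Cd$ inside $\End_{\Rr}(\omega)$. Since the centralizer of $\Cd$ in the fibre $\Hh$ of $\Hl$ is $\Cd$ itself, this centralizer is $\Cd\otimes_{\Rr}\Ha^{\op}=\Ha^{\op}\otimes\Cd$, which is isomorphic to $\Ha\otimes\Cd$ through the conjugation isomorphism $\Ha\cong\Ha^{\op}$. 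This delivers the rank-$2$ bundle $\omega$ with $\Ha\otimes\Cd\cong\End_{\Cd}(\omega)$, completing the construction and hence the proof via the first paragraph.

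The step I expect to be the main obstacle is this centralizer computation together with the bookkeeping of $\Cd$-structures: one must check that the $\Cd$-action carried a priori by $\omega$ coincides with the one induced from $\Cd\subset\Hl$ (this is implicit in Lemma \ref{TU2}), and that the isomorphism $\Ha^{\op}\otimes\Cd\cong\Ha\otimes\Cd$ respects the $\Cd$-algebra structures and not merely the underlying real algebras. Everything else—the translation of the Euler-class hypothesis and the concluding Morita equivalence—is routine once the earlier results are in hand.
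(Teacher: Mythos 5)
Your argument is correct, and it reaches the same structural conclusion as the paper --- everything reduces to producing a rank-$2$ $\Cd$-bundle $\omega$ with $\Ha\otimes\Cd\cong\End_{\Cd}(\omega)$, after which Morita theory is routine --- but you construct $\omega$ by a genuinely different route. The paper stays entirely inside the machinery of Section 6: it first notes that $\Hh_{\Cd}\otimes\Cd=\End_{\Cd}(\Hh_{\Cd})$, so every module over this ``standard'' algebra is induced from a $\Cd$-bundle, and then observes that $e((\delta\oplus\Cd)\otimes\delta)=0=e(\alpha\otimes\delta)$, so Theorem \ref{Me_Cd} supplies a morphism $\delta\oplus\Cd\to\alpha$ in $\mathcal{M}_\delta(X)$ represented by a $4$-dimensional $\Cd$-bundle $\mu$; Proposition \ref{modules} then transports modules, and $\omega=\mu\otimes_{\Hh_{\Cd}\otimes\Cd}\Hh_{\Cd}$. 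You instead translate $e(\alpha\otimes\delta)=\Dd(w_2(\alpha)+w_1^2(\delta))=0$ into the hypothesis $w_2(\alpha)=\rho_2(l)+w_1^2(\delta)$ of Lemma \ref{TSpinc} (with $\beta=\alpha$), obtain a $2$-dimensional $\Cd$-bundle $\omega$ that is an $\Hl$-line with $\Ha=\End_{\Hl}(\omega)^{\op}$, and then pass to $\End_{\Cd}(\omega)$ by computing the centralizer of $\Cd$ inside $\End_{\Rr}(\omega)\cong\Hl\otimes_\Rr\Ha^{\op}$ (Proposition \ref{Ha_Hb}), getting $\Cd\otimes\Ha^{\op}\cong\Ha\otimes\Cd$ via conjugation. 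The two points you flag as delicate are indeed the ones to check, and both go through: the $\Hl$-structure produced in Proposition \ref{bilin}/Lemma \ref{TU2} extends the given $\Cd$-action, so $\End_{\Cd}(\omega)$ really is that centralizer; and conjugation on the $\Ha$-factor is a $\Cd$-algebra isomorphism since it fixes the centre $\Cd$. What the paper's route buys is brevity, given that Theorem \ref{Me_Cd} and Proposition \ref{modules} are already proved; what yours buys is a more explicit $\omega$ (with $\det_{\Cd}\omega=\lambda$, $e(\lambda)=l$) and an explicit link back to the real Morita theory of Section 5, at the cost of the extra double-centralizer step.
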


\begin{proof} Just as $\Hh\otimes\Cc=\End_\Cc(\Hh)$, we have
$\Hh_{\Cd}\otimes\Cd=
\End_{\Cd}(\Hh_{\Cd})$.  Every
$\Hh_{\Cd}\otimes\Cd$-bundle $\eta$ is therefore of the form
$\Hh_{\Cd}\otimes_{\Cd}\zeta$, where 
$\zeta=\Hom_{\Hh_{\Cd}\otimes\Cd}(\Hh_{\Cd},\eta)$. 

Since $e((\delta\oplus\Cd)\otimes\delta)=0=e(\alpha\otimes\delta)$, there is
an isomorphism from $\delta\oplus\Cd$ to $\alpha$ in
$\mathcal{M}_{\delta}(X)$ represented by a $4$-dimensional vector bundle
$\mu$. Now any $\Ha\otimes\Cd$-bundle is of the form
$$\mu\otimes_{\Hh_{\Cd}\otimes\Cd}(\Hh_{\Cd}\otimes_{\Cd}\zeta)=
(\mu\otimes_{\Hh_{\Cd}\otimes\Cd}\Hh_{\Cd})\otimes_{\Cd}\zeta.$$
Putting $\omega=\mu\otimes_{\Hh_{\Cd}\otimes\Cd}\Hh_{\Cd}$, we get the 
assertion. 
\end{proof}

\begin{rem}
Consider a compact Hausdorff space $Y$ with an involution. Let $E$ be a complex vector bundle
over $Y$ and let $J:\xi\to\xi$ be a conjugate-linear map 
lifting the involution on $Y$ such that $J^2=-1$. The pair $(E,J)$ is variously
called a quaternionic or symplectic bundle over $Y$, \cite{D, S, dSLF}.
We relate this notion to complexified quaternionic bundles in our sence.

Let $\Cc P(\Hh)$ be the complex projective space modelled on
the $2$-dimensional complex vector space $\Hh$ with the involution given by
multiplication by $\j$. Define $X$ to be the quotient of $Y\times\Cc P(\Hh)$
by  the free involution. Let $\delta$ be the real line bundle over $X$
given by the double covering $p:Y\times \Cc P(\Hh)\to X$. We associate with $E$ 
a vector bundle $\eta$ such that the fibre over $x\in X$ is
$\eta_x=E_y\oplus E_{y'}$ where $p^{-1}(x)=\{y,y'\}$. 
This bundle  has an $\Hh\otimes\Cd$-structure. The multiplication by $\j\in
\Hh$ is given by $\j(u,v)=(Jv,Ju)$. The $\Cd$-structure is defined
as follows. Let $t$ be the involution
$(-1,1):E_y\oplus E_{y'}=\eta_x\to\eta_x$, and $t'=-t$. Then
$t\i=\i t$ and $t\j=-\j t$. So $(\i t)\i=\i(\i t)$ and 
$(\i t)\j=\j(\i t)$, and $(\i t)^2=-1$. So we can use $\i t=-\i t'$ to
define the $\Cd$-structure commuting with the $\Hh$-multiplication. 

In the same way the complex Hopf bundle $H$
over $\Cc P(\Hh)$ determines an $\Hh\otimes\Cd$-vector bundle $\omega$ over $X$
of complex dimension $2$ (where $\omega_x=H_y\oplus H_{y'}$). 
As in Corollary \ref{gen} there
is a $\Cd$-bundle $\zeta=\Hom_{\Hh\otimes\Cd}(\omega,\eta)$ over $X$ such 
that $\eta=\omega\otimes_{\Cd}\zeta$.
Its lift to $Y\times\Cc P(\Hh)$ is $E\otimes_{\Cc} H$.
\end{rem}

\begin{rem}\label{BrU}
One can define  a Brauer group
$\operatorname{BrU}_{\delta}(X)$ of central simple $\Cd$-algebras
and show that it is isomorphic to $\operatorname{Tor}H^3(X;\Zz(\delta))$.
The class of $\Ha\otimes\Cd=C(\alpha\otimes\delta)$ is 
$e(\alpha\otimes\delta)$.
\end{rem}

There is a complex $\K$-theory of $\Ha\otimes\Cd$-vector bundles modelled on
the real $\K$-theory of Section 3.

\begin{prop}\label{Kdh}
There is an isomorphism from the Grothendieck group of \ $\Ha\otimes\Cd$-modules over $X$
to $\K^0_{\delta}(\Ha)$, the $\K_\delta$-theory with compact supports, given by
mapping the class $[\xi]$ to the element represented by the linear map
$$x\mapsto vx:\pi^*\xi\to\pi^*\xi$$
over $x\in\Ha$, where $\pi:\Ha\to X$ is the projection. 
\end{prop}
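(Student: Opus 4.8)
The plan is to mimic the proof of Theorem \ref{KSpalpha} from Section 3, transporting the real $\KO$-argument into the complex $\K_\delta$-setting. The statement is the exact $\Cd$-analogue of Theorem \ref{KSpalpha}: whereas there we identified the Grothendieck group of $\Ha$-modules with $\tildeKO^{-1}(X^\alpha)=\KO^0(\Ha)$ via a Clifford-algebra periodicity argument, here we must identify the Grothendieck group of $\Ha\otimes\Cd$-modules with the complex group $\K^0_\delta(\Ha)$. The key structural input is the identification, established at the start of Section 5, that $\Ha\otimes\Cd$ is the (ungraded) Clifford algebra bundle $C(\alpha\otimes\delta)$ with centre $\Cd$. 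So the whole argument should run over $\Cd$ in place of $\Rr$, using the complex (rather than real) Clifford module picture.

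First I would set up the complex Karoubi--Segal periodicity statement: the $\Cd$-analogue of Proposition \ref{KS}, identifying the Grothendieck group $\K_{\delta,\,C(\zeta\otimes\delta)}(X)$ of graded $C(\zeta\otimes\delta)$-modules (over the complexified Clifford algebra, which carries $\Cd$ as a central subalgebra) with $\K_{\delta,\,\Zz/2}^0(L\otimes(\zeta\otimes\delta))$. This is already available in the references cited for the real case (\cite{Z2}, \cite{AD}, \cite{K}) in their complex versions, and the construction of the comparison map by the same formula $x\mapsto vx:\pi^*\mu_0\to L\otimes\pi^*\mu_1$ carries over verbatim. Next I would apply the $\Cd$-version of the orientation splitting: for $\zeta\otimes\delta$ of dimension a multiple of $4$ and oriented, the central involution $\omega$ splits graded modules into positive and negative parts, giving the complex analogue of Proposition \ref{KOplus}, an isomorphism $\K^+_{\delta,\,C(\zeta\otimes\delta)}(X)\to\K^0_\delta(\zeta)$.

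The decisive step, exactly parallel to the proof of Theorem \ref{KSpalpha}, is to apply this with $\zeta=\Rr\oplus\alpha$, writing $\zeta\otimes\delta$ and naming the first generator $e$, and then to check that positive graded $C(\Rr e\oplus\alpha)\otimes\Cd$-modules correspond bijectively to ungraded $\Ha\otimes\Cd$-modules. Here I would reuse the fibrewise algebra identity from Section 3: the even part $C_0(\Rr e\oplus\alpha)$ equals $\Ha\oplus\Ha\omega\cong\Ha\times\Ha$ via the idempotents $(1\pm\omega)/2$, and the inclusion $\Ha=\Rr 1\oplus\alpha\hookrightarrow C_0$ sending $\alpha$ to $e\omega\alpha$ is what supplies the module structure. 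Complexifying everything over $\Cd$, putting $\xi=\mu_0$ with $\omega$ acting as the identity gives the $\Ha\otimes\Cd$-structure on $\xi$, and conversely any $\Ha\otimes\Cd$-bundle $\xi=\mu_0$ extends uniquely to a positive graded module. Composing the two isomorphisms sends $[\xi]$ to the class represented by $x\mapsto vx:\pi^*\xi\to\pi^*\xi$ over $v\in\Ha$, which is precisely the asserted map.

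The main obstacle I anticipate is bookkeeping rather than any new idea: one must be careful that the twisting line bundle $\delta$ threads consistently through the Clifford construction, the equivariant group $\K_{\delta,\,\Zz/2}$, and the final compactly supported group $\K^0_\delta(\Ha)$, so that all three occurrences of $\delta$ genuinely match. In particular the orientation used to define $\omega$ in the $\Cd$-setting, and the verification that the periodicity isomorphism $\K^0_{\delta,\,\Zz/2}(L\otimes\zeta)\cong\K^0_{\delta,\,\Zz/2}(\zeta)$ remains valid with twisted coefficients, should be stated explicitly by appeal to \cite{CSS}, where the $\K_\delta$-theory was set up. Beyond these compatibilities the computation is formally identical to Section 3, so I would state the two $\Cd$-analogue propositions and then give the short deduction, leaving the routine verifications to the reader as the paper does elsewhere.
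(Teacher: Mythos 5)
Your proposal is correct and follows precisely the route the paper intends: the paper in fact gives no explicit proof of Proposition \ref{Kdh}, offering only the preceding remark that this complex $K$-theory is ``modelled on the real $K$-theory of Section 3'', and your adaptation of Propositions \ref{KS} and \ref{KOplus} together with the identification of positive graded $C(\Rr e\oplus\alpha)$-modules with ungraded $\Ha$-modules is exactly that model transported over $\Cd$. The one notational caveat is that the graded object entering the Karoubi--Segal step should be $C(\zeta)\otimes\Cd$ rather than $C(\zeta\otimes\delta)$ (the identification $\Ha\otimes\Cd\cong C(\alpha\otimes\delta)$ is an ungraded, odd-rank phenomenon), but this does not affect the substance of your argument.
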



\begin{thebibliography}{99}

\bibitem{AD} M.~Atiyah and J.~Dupont,
{\em Vector fields with finite singularities},
Acta Math. {\bf 128} (1972), 1--40. 

\bibitem{AR} M.F.~Atiyah and E.~Rees,
{\em Vector bundles on projective $3$-space},
Inventiones math. {\bf 35} (1976), 131--153.


\bibitem{BH} A. Borel, F. Hirzebruch,
{\em Characteristic classes and homogeneous spaces I},
Amer. J. Math. {\bf 80} (1958), 458-538.

\bibitem{B} E. H. Brown, jr.,
{\em The cohomology of $BSO_n$ and $BO_n$ with integer coefficients},
Proc. Amer. Math. Soc. {\bf 85} (1982), 283-288.

\bibitem{C} M. \v Cadek,
{\em The cohomology of $BO(n)$  with twisted integer coefficients},
J. Math. Kyoto Univ. {\bf 39} (1999), 277-286.           

\bibitem{CCV} M.~\v Cadek, M.C.~Crabb, J.~Van\v zura,
{\em Obstruction theory on 8-manifolds},
manuscripta math. {\bf 127} (2008), 167-186.

\bibitem{Z2} M.C.~Crabb,
$\Zz /2$-Homotopy theory,
Cambridge University Press, Cambridge, 1980.

\bibitem{Eu} M.C.~Crabb,
{\em On the $KO_{\Zz /2}$-Euler class, I},
Proc.~Roy.~Soc.~Edinburgh {\bf 117A} (1991), 115--137.

\bibitem{CSS} M.C.~Crabb, M.~Spreafico and W.A.~Sutherland,
{\em Enumerating projectively equivalent bundles},
Math.~Proc.~Camb.~Phil.~Soc. {\bf 125} (1999), 223--242.

\bibitem{DK} P.~Donovan and M.~Karoubi,
{\em Graded Brauer Groups and $K$-theory with local coefficients},
Publ.~Math.~IHES {\bf 38} (1970), 5--25.


\bibitem{dSLF} P.~dos~Santos and P.~Lima-Filho, 
{\em Quaternionic algebraic cycles and reality},
Trans. Amer. Math. Soc. {\bf 356} (2004), 4701-4736.

\bibitem{D} J.L.~Dupont, 
{\em Symplectic bundles and $KR$-theory},
Math. Scand. {\bf 24} (1969), 27-30.

\bibitem{G} A.~Grothendieck,
{\em Le groupe de Brauer},
S\' eminaire Bourbaki {\bf 290} (1965), 199--219.

\bibitem{K} M.~Karoubi,
{\em Sur la K-th\' eorie equivariante},
in S\' eminaire Heidelberg-Saarbr\" ucken-Strasbourg sur la K-th\' eorie, 
LNM {\bf 136}, Springer, 1970, 187--253.

\bibitem{MR1} S. Marchiafava, G. Romani,
{\em Sui fibrati con struttura quaternionale generalizzata},
Ann. Mat. Pura Appl. {\bf 107} (1975), 131-157.

\bibitem{MR2} S. Marchiafava, G. Romani,
{\em Sul classificante del gruppo $Sp(n)\cdot Sp(1)$},
Ann. Mat. Pura Appl. {\bf 110} (1976), 259-319.

\bibitem{MR3} S. Marchiafava, G. Romani,
{\em Ancora sulle classi di Stiefel-Whitney dei fibrati quaternionali
generalizzati},
Atti Accad. Naz. Lincei Rend. Cl. Sci. Fis. Mat. Natur {\bf 61} (1976),
438-447.

\bibitem{S1} S.M.~Salamon,
{\em Quaternionic K\" ahler manifolds},
Invent.~Math. {\bf 67} (1982), 143--171.

\bibitem{S2} S.M.~Salamon,
{\em Differential geometry of quaternionic manifolds},
Ann.~Sc.~Ec.~Norm.~Sup. {\bf 19} (1986), 31--55.

\bibitem{Sp} O.~Sp\'{a}\v{c}il,
{\em Indices of quaternionic complexes},
arXiv:math.GT/0909.0035, (2009), to appear in Differential Geom.~Appl.

\bibitem{S} R.M.~Seymour,
{\em The real $K$-theory of Lie groups and homogeneous spaces},
Quart. J. Math. Oxford {\bf 24} (1973), 7--30.

\end{thebibliography}
\end{document}